\newtheorem{theorem}{Theorem}[section]
\newtheorem*{theorem*}{Theorem}
\newtheorem{lemma}[theorem]{Lemma}
\newtheorem{proposition}[theorem]{Proposition}
\newtheorem{corollary}[theorem]{Corollary}
\newtheorem{question}[theorem]{Question}
\newtheorem{cit}[theorem]{Citation}
\newtheorem{observation}[theorem]{Observation}
\newtheorem*{main:alt}{Theorem~\ref{thrm:alt}}
\newtheorem*{main:BV_alt}{Corollary~\ref{cor:BV_alt}}
\newtheorem*{main:BNS_BF}{Theorem~\ref{thrm:BNS_BF}}
\theoremstyle{definition}
\newtheorem{definition}[theorem]{Definition}
\newtheorem{remark}[theorem]{Remark}
\newtheorem{example}[theorem]{Example}
\newcommand{\Z}{\mathbb{Z}}
\newcommand{\N}{\mathbb{N}}
\newcommand{\R}{\mathbb{R}}
\newcommand{\symm}{S}
\newcommand{\I}{^{-1}}
\newcommand{\thh}{\text{th}}
\newcommand{\clone}{\kappa}
\newcommand{\loose}[2]{\Lambda_{#1}(#2)}
\newcommand{\onto}{\twoheadrightarrow}
\newcommand{\defeq}{\mathbin{\vcentcolon =}}
\DeclareMathOperator{\Hom}{Hom}
\DeclareMathOperator{\F}{F}
\newcommand{\Fbr}%
   {F_{\operatorname{br}}}                 
\newcommand{\Vbr}%
   {V_{\operatorname{br}}}                 
\newcommand{\Pbr}%
   {P_{\operatorname{br}}}                 
\newcommand{\Thkern}[1]{\mathscr{K}(#1)}
\newcommand{\thloose}[1]%
   {\Thkern{\loose{*}{#1}}}    
\numberwithin{equation}{section}
\begin{document}

\title{On normal subgroups of the braided Thompson groups}
\date{\today}
\subjclass[2010]{Primary 20F65;   
                 Secondary 20F36, 20E07}           

\keywords{Thompson group, braid group, BNS-invariant, finiteness properties}

\author{Matthew C.~B.~Zaremsky}
\address{Department of Mathematics and Statistics, University at Albany (SUNY), Albany, NY 12222}
\email{mzaremsky@albany.edu}

\begin{abstract}
 We inspect the normal subgroup structure of the braided Thompson groups $\Vbr$ and $\Fbr$. We prove that every proper normal subgroup of $\Vbr$ lies in the kernel of the natural quotient $\Vbr\onto V$, and we exhibit some families of interesting such normal subgroups. For $\Fbr$, we prove that for any normal subgroup $N$ of $\Fbr$, either $N$ is contained in the kernel of $\Fbr\onto F$, or else $N$ contains $[\Fbr,\Fbr]$. We also compute the Bieri--Neumann--Strebel invariant $\Sigma^1(\Fbr)$, which is a useful tool for understanding normal subgroups containing the commutator subgroup.
\end{abstract}

\maketitle
\thispagestyle{empty}

\section*{Introduction}

Thompson's groups $F$, $T$ and $V$ have spent the past fifty years appearing in a variety of contexts and serving as examples of groups with unique and unexpected properties. Some examples of such properties are that $T$ and $V$ are finitely presented, infinite and simple, and $F$ is torsion-free and contains free abelian subgroups of arbitrarily high rank, but is finitely presented. While $F$ is not simple it is true that $[F,F]$ is simple, and any proper quotient of $F$ is abelian. Stronger than being finitely presented, all three groups are also of type $\F_\infty$, meaning they admit classifying spaces with compact $n$-skeleta, for all $n\in\N$.

The braided Thompson groups $\Vbr$ and $\Fbr$ appeared more recently, but have proved to have many interesting properties. First, $\Vbr$ was introduced independently by Brin \cite{brin07} and Dehornoy \cite{dehornoy06}, and serves as an ``Artinification'' of $V$. In particular it is a torsion-free group with $V$ as a quotient, which contains copies of every braid group $B_n$, and is finitely presented. A subgroup $\Fbr$ of $\Vbr$ was introduced by Brady, Burillo, Cleary and Stein \cite{brady08}. This group is finitely presented, contains copies of every pure braid group $PB_n$ and has $F$ as a quotient. Both $\Vbr$ and $\Fbr$ are also of type $\F_\infty$ \cite{bux16}. The fact that these groups are so vast as to contain every braid group, while still having such nice finiteness properties, makes them of considerable interest.

In this paper we analyze the normal subgroups of $\Vbr$ and $\Fbr$. There is a natural normal subgroup $\Pbr$ of $\Vbr$, which is the kernel of the map $\Vbr \onto V$, and is also the kernel of $\Fbr\onto F$. We prove the following Alternative for $\Fbr$:

\begin{main:alt}
 Let $N$ be a normal subgroup of $\Fbr$. Then either $N\le \Pbr$ or else $[\Fbr,\Fbr]\le N$.
\end{main:alt}

This has a corollary for $\Vbr$:

\begin{main:BV_alt}
 Any proper normal subgroup of $\Vbr$ is contained in $\Pbr$.
\end{main:BV_alt}

Note that, since $V$ is simple, any $N\triangleleft \Vbr$ not contained in $\Pbr$ satisfies $N\Pbr=\Vbr$, so the corollary could also be phrased: ``Any normal subgroup of $\Vbr$ either contains or is contained in $\Pbr$.'' This was conjectured by Kai-Uwe Bux after the preprint \cite{bux08}.

A consequence of these results is that $\Vbr$ and $[\Fbr,\Fbr]$ are perfect, but not $\Fbr$, which is somewhat analogous to the classical fact that $V$ and $[F,F]$ are simple, but not $F$. Also, we obtain some pleasant statements for the braided versions that are also true (for sometimes trivial reasons) for the classical versions, like: any quotient of $\Fbr$ is either abelian or else contains $F$, and any non-trivial quotient of $\Vbr$ surjects onto $V$.

We further analyze normal subgroups of $\Fbr$ containing the commutator subgroup by computing the Bieri--Neumann--Strebel invariant $\Sigma^1(\Fbr)$. This is a geometric invariant of a finitely generated group $G$ that determines which normal subgroups containing $[G,G]$ are themselves finitely generated. In general the BNS-invariant is considered to be quite difficult to compute. We state the result here, and see Section~\ref{sec:BNS} for the notation and background.

\begin{main:BNS_BF}
 The Bieri--Neumann--Strebel invariant $\Sigma^1(\Fbr)$ for $\Fbr$ consists of all points on the sphere $\Sigma(\Fbr)=S^3$ except for the points $[\phi_0]$ and $[\phi_1]$.
\end{main:BNS_BF}

For example our calculation of $\Sigma^1(\Fbr)$ shows that for $[\Fbr,\Fbr]\le N\le \Fbr$, $N$ fails to be finitely generated if and only if it is contained in either $\ker(\phi_0)$ or $\ker(\phi_1)$, with notation explained in Section~\ref{sec:chars}.

Lastly we inspect normal subgroups of $\Vbr$ and $\Fbr$ contained in $\Pbr$. We classify how they arise, namely any such normal subgroup is the limit of a uniquely determined \emph{complete coherent} sequence of normal subgroups of the $PB_n$. Details are given in Section~\ref{sec:non_abelian_quotients}, along with some examples, and some questions. Perhaps the most tantalizing question, which we have so far been unable to answer, is whether $\Vbr$ and/or $\Fbr$ is Hopfian; $V$ and $F$ are Hopfian, but we show that $\Pbr$ is not, so it is not entirely clear what to expect.

The paper is organized as follows. In Section~\ref{sec:background} we recall the relevant background on the braided Thompson groups. In Section~\ref{sec:alternative} we prove the Alternative for $\Fbr$, Theorem~\ref{thrm:alt}. Normal subgroups of $\Fbr$ containing the commutator subgroup are further investigated in Section~\ref{sec:abelain_quotients}, where the BNS-invariant $\Sigma^1(\Fbr)$ is computed. Normal subgroups contained in $\Pbr$ are discussed in Section~\ref{sec:non_abelian_quotients}.

\subsection*{Acknowledgments} I am grateful to Robert Bieri and Matt Brin for many helpful conversations, and Marco Marschler and Stefan Witzel for their comments and suggestions. Thanks are also due to the anonymous referee for many helpful suggestions, which in particular improved Section~\ref{sec:non_abelian_quotients}.

\section{The braided Thompson groups}\label{sec:background}

In this section we recall a model for elements of $\Vbr$ and $\Fbr$, state some presentations, discuss the abelianizations of the groups (in fact $\Vbr$ is perfect), and fix some notation for characters of $\Fbr$ that will be used in Section~\ref{sec:abelain_quotients}.

\subsection{Definitions and models}\label{sec:the_groups}

Elements of $\Vbr$ are represented by \emph{braided paired tree diagrams}, as in \cite{brady08}. By a \emph{tree} we will always mean a finite rooted binary tree. The \emph{trivial tree} is just a single node. Vertices of a non-trivial tree have valency $3$, except for the \emph{leaves}, which have valency $1$, and the \emph{root}, which has valency $2$. A non-leaf vertex $u$, together with the two edges and their vertices $v,w$ connected to $u$ and directed away from the root, form a \emph{caret}. The vertices $v$ and $w$ are \emph{children} of $u$. Our trees will always come equipped with a decision for each such $u$, as to which of $v$ or $w$ is the \emph{left} or \emph{right} child. This induces a numbering of the leaves of a tree, left to right, from $1$ to $n$ for some $n$.

A braided paired tree diagram is a triple $(T_-,b,T_+)$ where $T_\pm$ are trees, each with $n$ leaves for some $n\in\N$, and $b$ is an element of the braid group $B_n$. The model we will use for elements is split-braid-merge diagrams. We draw $T_-$ (the splits) with the root on the top and the $n$ leaves at the bottom, then the braid on $n$ strands, and then $T_+$ (the merges) with the $n$ leaves at the top and the root at the bottom. See Figure~\ref{fig:element_Vbr} for an example.

\begin{figure}[htb]
 \begin{tikzpicture}[line width=0.8pt]
  \draw
   (0,-2) -- (2,0) -- (4,-2)   (0.5,-1.5) -- (1,-2)   (1,-1) -- (2,-2)   (3.5,-1.5) -- (3,-2)
   (0,-2) to [out=-90, in=90] (1,-4)   (3,-2) to [out=-90, in=90] (4,-4);
  \draw[white, line width=4pt]
   (2,-2) to [out=-90, in=90] (0,-4)   (4,-2) to [out=-90, in=90] (3,-4);
  \draw
   (2,-2) to [out=-90, in=90] (0,-4)   (1,-2) to [out=-90, in=90] (2,-4)   (4,-2) to [out=-90, in=90] (3,-4);
  \draw[white, line width=4pt]
  (1,-2) to [out=-90, in=90] (2,-4);
  \draw
  (1,-2) to [out=-90, in=90] (2,-4);
  \draw
   (0,-4) -- (2,-6) -- (4,-4)   (1.5,-5.5) -- (3,-4)   (1,-4) -- (2,-5)   (2,-4) -- (1.5,-4.5);
 \end{tikzpicture}
 \caption{An element of $\Vbr$.}\label{fig:element_Vbr}
\end{figure}

Two such triples are considered \emph{equivalent} if they are connected via a finite sequence of \emph{reductions and expansions}. An expansion of $(T_-,b,T_+)$ amounts to adding a caret to some leaf of $T_-$, bifurcating the strand coming out of that leaf into two parallel strands, and then adding a caret to the leaf of $T_+$ at which that original strand ended. A reduction is the reverse of an expansion.

We will use expansions a lot in all that follows, so we make some relevant definitions here, following \cite{witzel18}.

\begin{definition}[Cloning]\label{def:cloning}
 Let $\clone_k^n \colon B_n \to B_{n+1}$ be the injective function that takes a braid and bifurcates the $k\thh$ strand into two parallel strands, where we number the strands at the bottom. We call $\clone_k^n$ the $k\thh$ \emph{cloning map}, and we say that the resulting strands are \emph{clones}. Note that $\clone_k^n$ is not a group homomorphism, since the numbering of the strands may be different on the bottom and the top. When appropriate, we may also write $\clone_k$ for $\clone_k^n$.
 
 For a tree $T$ with $n$ leaves, let $\lambda_k$ be a single-caret tree whose root is identified with the $k\thh$ leaf of $T$. Denote by $T\cup\lambda_k$ the tree obtained by attaching this caret to that leaf. Let $\rho_b$ be the image of $b$ under the natural quotient $B_n\to \symm_n$. Now, for trees $T_-$ and $T_+$ with $n$ leaves and $b\in B_n$ we have the expansion
 $$(T_-,b,T_+)=(T_- \cup \lambda_{\rho_b(k)},\clone_k^n(b),T_+ \cup \lambda_k)\text{.}$$
 
 We can iterate this. Let $T$ be a tree with $n$ leaves and let $\Phi$ be a forest with $n$ roots. This is just a finite sequence of trees, and the \emph{roots} of the forest are the roots of its trees. We can write $\Phi$ as an ordered union of carets, $\Phi=\lambda_{k_1} \cup \cdots \cup \lambda_{k_r}$, and consider the tree $T\cup\Phi$ with $n+r$ leaves. For this to makes sense we need $1\le k_1\le n$, then $1\le k_2\le n+1$, and so forth up to $1\le k_r\le n+(r-1)$. Here by ``ordered'' union we just mean that the subscripts we need to use for the $\lambda_{k_i}$ depend on the order in which the carets are attached, e.g., $\lambda_2 \cup \lambda_1 = \lambda_1 \cup \lambda_3$ are two strings of carets both representing the forest $\Phi$ consisting of two disjoint carets. Denote by $\clone_\Phi^n$ the iterated cloning map
 $$\clone_\Phi\defeq \clone_{k_r}\circ \cdots \circ \clone_{k_1}\text{.}$$
 This is well defined, that is, if $\Phi$ can be written as a different ordered union of carets, we still get the same cloning map.
\end{definition}

In some ways it makes more sense to treat cloning maps as right maps, and write $(b)\clone_k^n$ (as in \cite{witzel18}), since otherwise as seen above we have things like $\clone_{\lambda_{k_1} \cup \lambda_{k_2}}=\clone_{\lambda_{k_2}} \circ \clone_{\lambda_{k_1}}$, but this technical precision is outweighed by future notational awkwardness, so we will stick to writing $\clone_k^n(b)$.

As an example of cloning, in Figure~\ref{fig:expansion} we have an expansion of the form
$$(T_-,b,T_+)=(T_- \cup \lambda_1, \clone_2^4(b), T_+ \cup \lambda_2)\text{.}$$

\begin{figure}[htb]
 \begin{tikzpicture}[line width=0.8pt]
  \draw
   (0,-2) -- (2,0) -- (4,-2)   (0.5,-1.5) -- (1,-2)   (3.5,-1.5) -- (3,-2)
   (1,-2) to [out=-90, in=90] (0,-4)   (3,-2) to [out=-90, in=90] (4,-4);
  \draw[white, line width=4pt]
   (0,-2) to [out=-90, in=90] (1,-4)   (4,-2) to [out=-90, in=90] (3,-4);
  \draw
   (0,-2) to [out=-90, in=90] (1,-4)   (4,-2) to [out=-90, in=90] (3,-4);
  \draw
   (0,-4) -- (2,-6) -- (4,-4)   (1.5,-5.5) -- (3,-4)   (1,-4) -- (2,-5);
   
  \node at (5,-3) {$\longrightarrow$};
  
 \begin{scope}[xshift=6cm]
  \draw
   (0,-2) -- (2,0) -- (4,-2)   (0.5,-1.5) -- (1,-2)   (1,-1) -- (2,-2)   (3.5,-1.5) -- (3,-2)
   (2,-2) to [out=-90, in=90] (0,-4)   (3,-2) to [out=-90, in=90] (4,-4);
  \draw[white, line width=4pt]
   (0,-2) to [out=-90, in=90] (1,-4)   (1,-2) to [out=-90, in=90] (2,-4)   (4,-2) to [out=-90, in=90] (3,-4);
  \draw
   (0,-2) to [out=-90, in=90] (1,-4)   (1,-2) to [out=-90, in=90] (2,-4)   (4,-2) to [out=-90, in=90] (3,-4);
  \draw
   (0,-4) -- (2,-6) -- (4,-4)   (1.5,-5.5) -- (3,-4)   (1,-4) -- (2,-5)   (2,-4) -- (1.5,-4.5);
 \end{scope}
   
 \end{tikzpicture}
 \caption{Expansion in $\Vbr$.}\label{fig:expansion}
\end{figure}

Recall that a braid $b$ is \emph{pure} if $\rho_b$ is the trivial element of $\symm_n$. We will denote the subgroup of all pure braids by $PB_n$.

\begin{observation}
 If $b\in B_n$ then $b$ is pure if and only if $\clone_k^n(b)$ is pure for all $k$. That is, the property of a braid being pure is invariant under both expansion and reduction. Restricted to $PB_n$, the cloning maps $\clone_k^n \colon PB_n \to PB_{n+1}$ are group homomorphisms, since the numbering of the strands is the same on the bottom and the top.
\end{observation}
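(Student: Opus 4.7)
The plan is to compute the permutation $\tau \defeq \rho_{\clone_k^n(b)}$ explicitly in terms of $\sigma \defeq \rho_b$ and to derive both claims from this formula. The geometric content is that bifurcating the $k\thh$ strand of $b$ produces two \emph{parallel} clones that never cross each other; hence they remain in adjacent positions throughout the braid. So if strand $k$ of $b$ runs from position $k$ at the bottom to position $\sigma(k)$ at the top, then its two clones run from positions $k, k+1$ at the bottom to positions $\sigma(k), \sigma(k)+1$ at the top, while the remaining strands get shifted by $+1$ whenever they lie to the right of the clones at either endpoint.

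Turning the picture into a formula: $\tau(k) = \sigma(k)$ and $\tau(k+1) = \sigma(k)+1$; for $i < k$, $\tau(i) = \sigma(i)$ or $\sigma(i)+1$ according to whether $\sigma(i) < \sigma(k)$ or $\sigma(i) > \sigma(k)$; and for $i > k+1$, the analogous rule applied to $\sigma(i-1)$. The forward direction of the biconditional then follows immediately, since $\sigma$ being the identity makes every case collapse to $\tau(j) = j$. For the converse (which in fact works already for a single $k$), assume $\tau$ is the identity. From $\tau(k) = \sigma(k) = k$ we extract $\sigma(k) = k$. For $i < k$, the case $\sigma(i) > k$ would force $\sigma(i) = i - 1$, which contradicts $\sigma(i) > k > i - 1$; so $\sigma(i) < k$ and then $\sigma(i) = \tau(i) = i$. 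A symmetric argument for $i > k+1$ gives $\sigma(i-1) = i - 1$, so $\sigma$ is trivial and $b$ is pure.

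For the homomorphism claim, note that once $b$ is pure the two clones in $\clone_k^n(b)$ enter at positions $k, k+1$ at the bottom and exit at positions $k, k+1$ at the top; cloning a pure braid therefore commutes with vertical concatenation of diagrams. Concretely, given $b, b' \in PB_n$, stacking the diagram of $\clone_k^n(b')$ on top of that of $\clone_k^n(b)$ matches the clones along the interface and yields the same diagram as $\clone_k^n(b'b)$, proving that $\clone_k^n \colon PB_n \to PB_{n+1}$ is a group homomorphism.

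The main obstacle is really just careful bookkeeping of the index shifts at the top and bottom caused by the insertion of the extra clone strand; the underlying geometric statement is essentially obvious from the picture of two parallel strands.
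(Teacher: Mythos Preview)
Your proof is correct. The paper does not actually supply a proof of this observation: it regards the biconditional as evident, and for the homomorphism claim it offers only the one-line justification already written into the statement (``since the numbering of the strands is the same on the bottom and the top''). Your explicit computation of $\rho_{\clone_k^n(b)}$ in terms of $\rho_b$ is thus considerably more detailed than what the paper provides, but it is a perfectly valid way to make the observation rigorous; your homomorphism argument unpacks exactly the same idea the paper alludes to, namely that for pure $b$ the clones occupy positions $k,k+1$ at both ends so that stacking commutes with cloning.
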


The set of all equivalence classes of braided paired tree diagrams forms a group, $\Vbr$, with multiplication given by ``stacking'' the diagrams. By restricting to only considering pure braids, we obtain the subgroup $\Fbr$. Crucial to our model being useful is that one can always turn a product of split-braid-merge diagrams into a single split-braid merge diagram via finitely many reductions and expansions. There are some natural subgroups of $\Vbr$ and $\Fbr$ worth mentioning. First, $\Vbr$ contains a copy of $F$ (diagrams with no braiding), and ``many'' copies of every braid group $B_n$ for $n\in\N$. In particular for any tree with $n$ leaves, the set of triples $(T,b,T)$ for $b\in B_n$ is isomorphic to $B_n$. Similarly, $\Fbr$ contains $F$ and many copies of every pure braid group $PB_n$ for $n\in\N$, namely a copy of $PB_n$ for every tree with $n$ leaves.

\subsection{The kernel $\Pbr$}\label{sec:Pbr}

The group $\Vbr$ surjects onto $V$ under the map that turns every braid $b$ into the permutation $\rho_b$. The kernel of this map consists of elements represented by triples $(T,p,T)$ where $p$ is pure. Note that the two trees must \emph{both} be $T$, if $(T,p,T)$ is to become trivial under $\Vbr\onto V$. We will denote this kernel by $\Pbr$, so we have a short exact sequence
$$1\to \Pbr\to \Vbr\to V\to 1\text{.}$$

Of course $\Pbr\le \Fbr$, and is the kernel of the natural quotient $\Fbr\onto F$. The short exact sequence above restricts to
$$1\to\Pbr\to\Fbr\to F\to 1 \text{,}$$
which splits, so $\Fbr=\Pbr\rtimes F$. The sequence for $\Vbr$ does not split; for instance $V$ has torsion but $\Vbr$ is torsion-free.

The kernel $\Pbr$ is a direct limit of copies of $PB_n$, arranged in a certain directed system. This is spelled out in detail in Section~1 of \cite{burillo08}. In short, for a tree $T$ with $n$ leaves, we have an isomorphic copy of $PB_n$, denoted $PB_T$, consisting of triples $(T,p,T)$ for $p\in PB_n$. We write $T\le T'$ if $T'$ is obtained from $T$ by an iterated process of adding carets to the leaves of $T$. This makes the set of $PB_T$ into a directed system, with morphisms given by the inclusions induced by cloning maps. The limit of this system is exactly $\Pbr$. As a remark, the notation in \cite{burillo08} for $\Pbr$ is $PBV$, and the inclusions induced by the cloning maps are denoted $\alpha_{n,T,i}$.

\subsection{Presentations}\label{sec:presentations}

Brady, Burillo, Cleary and Stein \cite{brady08} give infinite and finite presentations for both $\Vbr$ and $\Fbr$. For our purposes the infinite presentations are the more useful ones.

First we look at $\Vbr$. The generators are $x_i$ ($0\le i$), $\sigma_i$ and $\tau_i$ ($1\le i$). The relations are as follows.

\

\begin{longtable}{lll}
 (A)	 & $x_jx_i = x_ix_{j+1}$ \hfill for& $0 \leq i < j$\\\\
 
 (b1) & $\sigma_i \sigma_j = \sigma_j \sigma_i$
 	\hfill for& $1\le i\le j-2$\\
 (b2) & $\sigma_i \sigma_{i+1}\sigma_i = \sigma_{i+1} \sigma_i \sigma_{i+1}$
 	\hfill for& $1\le i$\\
 (b3) & $\sigma_i \tau_j = \tau_j \sigma_i$
 	\hfill for& $1\le i\le j-2$\\
 (b4) & $\sigma_i \tau_{i+1}\sigma_i = \tau_{i+1} \sigma_i \tau_{i+1}$
 	\hfill for& $1\le i$\\\\
	
 (c1) & $\sigma_i x_j = x_j \sigma_i$
 	\hfill for& $1\le i<j$\\
 (c2) & $\sigma_i x_i = x_{i-1} \sigma_{i+1} \sigma_i$
 	\hfill for& $1\le i$\\
 (c3) & $\sigma_i x_j = x_j \sigma_{i+1}$
 	\hfill for& $1\le j\le i-2$\\
 (c4) & $\sigma_{i+1} x_i = x_{i+1} \sigma_{i+1} \sigma_{i+2}$
 	\hfill for& $1\le i$\\\\
	
 (d1) & $\tau_i x_j = x_j \tau_{i+1}$
 	\hfill for& $1\le j\le i-2$\\
 (d2) & $\tau_i x_{i-1} = \sigma_i \tau_{i+1}$
 	\hfill for& $1\le i$\\
 (d3) & $\tau_i  = x_{i-1} \tau_{i+1} \sigma_i$
 	\hfill for& $1\le i$
\end{longtable}

\begin{figure}[htb]
 \begin{tikzpicture}[line width=0.8pt]
  \draw
   (0,-2) -- (2,0) -- (4,-2)   (2.5,-0.5) -- (1,-2)   (3,-1) -- (2,-2)   (3.5,-1.5) -- (3,-2);
   
  \draw (0,-2) -- (0,-4)   (1,-2) -- (1,-4)   (4,-2) -- (4,-4)
   (3,-2) to [out=-90, in=90] (2,-4);
  \draw[white, line width=4pt]
   (2,-2) to [out=-90, in=90] (3,-4);
  \draw
   (2,-2) to [out=-90, in=90] (3,-4);
  
  \draw
   (0,-4) -- (2,-6) -- (4,-4)   (2.5,-5.5) -- (1,-4)   (3,-5) -- (2,-4)   (3.5,-4.5) -- (3,-4);
  \node at (-0.6,-3) {$\sigma_3$:};
  
  \begin{scope}[xshift=6cm]
   \draw
   (0,-2) -- (2,0) -- (4,-2)   (2.5,-0.5) -- (1,-2)   (3,-1) -- (2,-2)   (3.5,-1.5) -- (3,-2);
   
  \draw (0,-2) -- (0,-4)   (1,-2) -- (1,-4)   (2,-2) -- (2,-4)
   (4,-2) to [out=-90, in=90] (3,-4);
  \draw[white, line width=4pt]
   (3,-2) to [out=-90, in=90] (4,-4);
  \draw
   (3,-2) to [out=-90, in=90] (4,-4);
  
  \draw
   (0,-4) -- (2,-6) -- (4,-4)   (2.5,-5.5) -- (1,-4)   (3,-5) -- (2,-4)   (3.5,-4.5) -- (3,-4);
  \node at (-0.6,-3) {$\tau_4$:};
  \end{scope}

 \end{tikzpicture}
 \caption{Examples of generators of $\Vbr$.}\label{fig:BV_gens}
\end{figure}

The elements $x_i$ are the standard generators of $F$. The $\sigma_i$ are given by $(R_{i+2},a_i,R_{i+2})$, where $R_{i+2}$ is the \emph{all-right tree} with $i+2$ leaves, and $a_i\in B_{i+2}$ is the braid that crosses strand $i$ across strand $i+1$. An all-right tree is one in which, for every caret but the first, that caret's root is the previous caret's right leaf. The $\tau_i$ are given by $(R_{i+1},b_i,R_{i+1})$, where $b_i\in B_{i+1}$ crosses strand $i$ across strand $i+1$. The important difference is that in $\sigma_i$ the last strand is not used, and in $\tau_i$ it is. See Figure~\ref{fig:BV_gens} for some examples.

Now we look at $\Fbr$. The generators are $x_i$ ($0\le i$), $\alpha_{i,j}$ and $\beta_{i,j}$ ($1\le i<j$). The relations are as follows.

\

\begin{longtable}{lll}
 (A)	 & $x_jx_i = x_ix_{j+1}$ \hfill for& $0 \leq i < j$\\\\

 (B1)	 & $\alpha_{r,s}^{-1}\alpha_{i,j}\alpha_{r,s} = \alpha_{i,j}$
	 \hfill for& $1 \leq r < s < i < j$ \\  &\hfill or& $1 \leq i < r < s < j$\\
 (B2)	 & $\alpha_{r,s}^{-1}\alpha_{i,j}\alpha_{r,s} = \alpha_{r,j}\alpha_{i,j}\alpha_{r,j}^{-1}$ 
	 \hfill for& $1 \leq r < s = i < j$\\
 (B3)	 & $\alpha_{r,s}^{-1}\alpha_{i,j}\alpha_{r,s} = (\alpha_{i,j}\alpha_{s,j})\alpha_{i,j}(\alpha_{i,j}\alpha_{s,j})^{-1}$ 
	 \hfill for& $1 \leq r = i < s < j$\\
 (B4)	 & $\alpha_{r,s}^{-1}\alpha_{i,j}\alpha_{r,s} 	= (\alpha_{r,j}\alpha_{s,j}\alpha_{r,j}^{-1}\alpha_{s,j}^{-1})    \alpha_{i,j} (\alpha_{r,j}\alpha_{s,j}\alpha_{r,j}^{-1}\alpha_{s,j}^{-1})^{-1}$ 
	 \hfill for& $1 \leq r < i < s < j$\\
 (B5)	 & $\alpha_{r,s}^{-1}\beta_{i,j}\alpha_{r,s} = \beta_{i,j}$
	 \hfill for& $1 \leq r < s < i < j$ \\&\hfill or& $1 \leq i < r < s < j$\\
 (B6)	 & $\alpha_{r,s}^{-1}\beta_{i,j}\alpha_{r,s} = \beta_{r,j}\beta_{i,j}\beta_{r,j}^{-1}$
	 \hfill for& $1 \leq r < s = i < j$\\
 (B7)	 & $\alpha_{r,s}^{-1}\beta_{i,j}\alpha_{r,s} = (\beta_{i,j}\beta_{s,j})\beta_{i,j}(\beta_{i,j}\beta_{s,j})^{-1}$
	 \hfill for& $1 \leq r = i < s < j$\\
 (B8)	 & $\alpha_{r,s}^{-1}\beta_{i,j}\alpha_{r,s} = (\beta_{r,j}\beta_{s,j}\beta_{r,j}^{-1}\beta_{s,j}^{-1})    \beta_{i,j} (\beta_{r,j}\beta_{s,j}\beta_{r,j}^{-1}\beta_{s,j}^{-1})^{-1}$
	 \hfill for& $1 \leq r < i < s < j$\\\\

 (C)	& $\beta_{i,j} = \beta_{i,j+1}\alpha_{i,j}$ 
	\hfill for& $1 \leq i < j$\\\\

 (D1) & $\alpha_{i,j}x_{k-1} = x_{k-1}\alpha_{i+1,j+1}$
 	 \hfill for& $1 \leq k < i < j$\\
 (D2) & $\alpha_{i,j}x_{k-1} = x_{k-1}\alpha_{i+1,j+1}\alpha_{i,j+1}$
 	 \hfill for& $1 \leq k = i < j$\\
 (D3) & $\alpha_{i,j}x_{k-1} = x_{k-1}\alpha_{i,j+1}$
 	 \hfill for& $1 \leq  i < k < j$\\
 (D4) & $\alpha_{i,j}x_{k-1} = x_{k-1}\alpha_{i,j+1}\alpha_{i,j}$
 	 \hfill for& $1 \leq i < k = j$\\
 (D5) & $\alpha_{i,j}x_{k-1} = x_{k-1}\alpha_{i,j}$
 	 \hfill for& $1 \leq  i < j < k$\\
 (D6) & $\beta_{i,j}x_{k-1} = x_{k-1}\beta_{i+1,j+1}$
 	 \hfill for& $1 \leq k < i < j$\\
 (D7) & $\beta_{i,j}x_{k-1} = x_{k-1}\beta_{i+1,j+1}\beta_{i,j+1}$
 	 \hfill for& $1 \leq k = i < j$\\
 (D8) & $\beta_{i,j}x_{k-1} = x_{k-1}\beta_{i,j+1}$
 	 \hfill for& $1 \leq  i < k < j$\\
 (D9) & $\beta_{i,j}x_{k-1} = x_{k-1}\beta_{i,j}$
 	 \hfill for& $1 \leq  i < j < k$.
\end{longtable}

\

These generators can be written in terms of the generators for $\Vbr$ as follows:

\begin{align*}
 \alpha_{i,j}&=\sigma_i \sigma_{i+1} \cdots \sigma_{j-2} \sigma_{j-1}^2 \sigma_{j-2}\I \cdots \sigma_i\I \text{ and }\\
\beta_{i,j}&=\sigma_i \sigma_{i+1} \cdots \sigma_{j-2} \tau_{j-1}^2 \sigma_{j-2}\I \cdots \sigma_i\I \text{.}
\end{align*}

Pictorially, $\alpha_{i,j}$ is an all-right tree of splits, out to $j+1$ strands, then strand $i$ braids around strand $j$ and goes back to position $i$, and then there is an all-right tree of merges. For $\beta_{i,j}$ the only difference is that we go out to $j$ strands, and so we use the last strand. See Figure~\ref{fig:BF_gens} for some examples.

\begin{figure}[hbt]
 \begin{tikzpicture}[line width=0.8pt]
  \draw
   (0,-2) -- (2,0) -- (4,-2)   (2.5,-0.5) -- (1,-2)   (3,-1) -- (2,-2)   (3.5,-1.5) -- (3,-2);
   
  \draw (0,-2) -- (0,-4)   (2,-2) -- (2,-4)   (4,-2) -- (4,-4);
  \draw[white, line width=4pt]
   (3.5,-3) to [out=-90, in=90] (1,-4);
  \draw
   (3.5,-3) to [out=-90, in=90] (1,-4);
  \draw[white, line width=4pt]
   (3,-2) -- (3,-4);
  \draw
   (3,-2) -- (3,-4);
  \draw[white, line width=4pt]
   (1,-2) to [out=-90, in=90] (3.5,-3);
  \draw
   (1,-2) to [out=-90, in=90] (3.5,-3);
  
  \draw
   (0,-4) -- (2,-6) -- (4,-4)   (2.5,-5.5) -- (1,-4)   (3,-5) -- (2,-4)   (3.5,-4.5) -- (3,-4);
  \node at (-0.6,-3) {$\alpha_{2,4}$:};
  
  \begin{scope}[xshift=6cm]
   \draw
   (0,-2) -- (2,0) -- (4,-2)   (2.5,-0.5) -- (1,-2)   (3,-1) -- (2,-2)   (3.5,-1.5) -- (3,-2);
   
  \draw (0,-2) -- (0,-4)   (1,-2) -- (1,-4)   (3,-2) -- (3,-4);
  \draw[white, line width=4pt]
   (4.5,-3) to [out=-90, in=90] (2,-4);
  \draw
   (4.5,-3) to [out=-90, in=90] (2,-4);
  \draw[white, line width=4pt]
   (4,-2) -- (4,-4);
  \draw
   (4,-2) -- (4,-4);
  \draw[white, line width=4pt]
   (2,-2) to [out=-90, in=90] (4.5,-3);
  \draw
   (2,-2) to [out=-90, in=90] (4.5,-3);
  
  \draw
   (0,-4) -- (2,-6) -- (4,-4)   (2.5,-5.5) -- (1,-4)   (3,-5) -- (2,-4)   (3.5,-4.5) -- (3,-4);
  \node at (-0.6,-3) {$\beta_{3,5}$:};
  \end{scope}

 \end{tikzpicture}
 \caption{Examples of generators of $\Fbr$.}\label{fig:BF_gens}
\end{figure}

\begin{observation}\label{obs:Pbr_gens}
 The group $\Pbr$ is generated by the conjugates of $\alpha_{i,j}$ and $\beta_{i,j}$ ($1\le i<j$) by elements of $F$.
\end{observation}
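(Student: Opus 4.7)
The plan is to exploit the semidirect product decomposition $\Fbr=\Pbr\rtimes F$ recorded in Section~\ref{sec:Pbr}. With respect to this decomposition, every generator of $\Fbr$ from the infinite presentation in Section~\ref{sec:presentations} lies entirely in one of the two factors: each $x_k$ lies in $F$, while each $\alpha_{i,j}$ and $\beta_{i,j}$ lies in $\Pbr$.

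Given $p\in\Pbr$, I would write $p=g_1 g_2\cdots g_n$ as a word in those generators and their inverses, where each $g_\ell$ is either of the form $x_k^{\pm 1}$ or of the form $\alpha_{i,j}^{\pm 1}$ or $\beta_{i,j}^{\pm 1}$. Decompose $g_\ell=q_\ell f_\ell$ according to the semidirect product, noting that one of $q_\ell,f_\ell$ is always trivial depending on which type of generator $g_\ell$ is. Iterating the identity $(qf)(q'f')=q\cdot (fq'f\I)\cdot ff'$, one obtains
$$p=g_1 g_2\cdots g_n=\Big(\prod_{\ell=1}^n F_{\ell-1}\, q_\ell\, F_{\ell-1}\I\Big)\cdot F_n\text{,}$$
where $F_\ell\defeq f_1 f_2\cdots f_\ell\in F$ and $F_0\defeq 1$. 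Since $p\in\Pbr$ and the decomposition $\Pbr\rtimes F$ is unique, the $F$-suffix must satisfy $F_n=1$. Thus $p$ equals the product of conjugates $F_{\ell-1}\,q_\ell\,F_{\ell-1}\I$, in which each non-trivial $q_\ell$ is one of $\alpha_{i,j}^{\pm 1}$ or $\beta_{i,j}^{\pm 1}$, conjugated by the element $F_{\ell-1}\in F$.

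There is no substantive obstacle; the only care needed is in the bookkeeping, namely verifying that the running conjugators $F_{\ell-1}$ genuinely lie in $F$ (immediate because each $f_\ell$ does) and that the semidirect product multiplication identity is applied in the correct order. The observation then follows at once from the existence of the splitting $\Fbr=\Pbr\rtimes F$ established in Section~\ref{sec:Pbr}.
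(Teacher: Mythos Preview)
Your argument is correct and is a clean instance of the general fact that in an internal semidirect product $G=N\rtimes H$ generated by a set $S\subseteq N\cup H$, the normal subgroup $N$ is generated by the $H$-conjugates of $S\cap N$. The bookkeeping with the running prefixes $F_\ell$ is right, and the conclusion $F_n=1$ follows from uniqueness of the $\Pbr\rtimes F$ decomposition exactly as you say.

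The paper's proof proceeds differently and more concretely: it observes that $\Pbr$ is the union of the subgroups $(T,PB_n,T)$, that each such subgroup is the $F$-conjugate of $(R_n,PB_n,R_n)$ by the tree-pair element $(R_n,T)\in F$, and that $(R_n,PB_n,R_n)$ is already generated by the $\alpha_{i,j}$ and $\beta_{i,j}$ with appropriate indices. Your route is purely algebraic and uses only the splitting and the presentation; the paper's route is geometric, avoids the presentation entirely, and has the advantage of telling you explicitly which $F$-conjugator to use for a given element of $\Pbr$ (namely the one taking the all-right tree to the tree in question). Both are short; yours generalizes verbatim to any split extension with generators lying in the two factors, while the paper's makes the structure of $\Pbr$ as a direct limit more visible.
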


\begin{proof}
 We need to generate every $PB_T$. If $T$ has $n$ leaves and $R_n$ is the all-right tree with $n$ leaves, then $PB_T$ is conjugate to $PB_{R_n}$ by the element $(R_n,T)$ of $F$. But $PB_{R_n}$ is generated by the $\alpha_{i,j}$ for $1\le i<j<n$ and $\beta_{i,j}$ for $1\le i<j\le n$.
\end{proof}

It will be convenient in Section~\ref{sec:abelain_quotients} to use a fact about $\Fbr$ that does not seem to have been recorded before, namely that it is an ascending HNN-extension of a certain subgroup. For $n\ge0$ let $F(n)$ be the subgroup of $F$ generated by all the $x_i$ with $i\ge n$. In particular $F(0)=F$. It is well known that $F$ is an ascending HNN-extension of $F(1)$ with stable element $x_0$. Now define $\Fbr(n)$ to be the subgroup of $\Fbr$ generated by all the $\alpha_{i,j}$ and $\beta_{i,j}$ for $1\le i<j$ and all the $x_i$ for $i\ge n$.

\begin{lemma}\label{lem:hnn}
 We have that $\Fbr$ is an ascending HNN-extension of $\Fbr(1)$ with stable element $x_0$.
\end{lemma}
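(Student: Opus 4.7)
The plan is to show $\Fbr \cong \Fbr(1)*_{\phi}$, an ascending HNN extension, where $\phi$ is conjugation by $x_0$. This will parallel (and bootstrap from) the analogous and well-known statement that $F = F(1)*_{x_0}$.

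\emph{Step 1 (Conjugation by $x_0$ preserves $\Fbr(1)$).} The first task is to verify that $\phi(h) \defeq x_0^{-1}hx_0$ lies in $\Fbr(1)$ for every $h \in \Fbr(1)$. It suffices to check this on the defining generators, which I do using the relations (A), (D1), (D2), (D6), (D7) specialized to $k=1$:
\begin{align*}
 x_0^{-1} x_i x_0 &= x_{i+1} & (i \ge 1), \\
 x_0^{-1} \alpha_{i,j} x_0 &= \alpha_{i+1,j+1} & (i \ge 2), \\
 x_0^{-1} \alpha_{1,j} x_0 &= \alpha_{2,j+1}\alpha_{1,j+1}, & \\
 x_0^{-1} \beta_{i,j} x_0 &= \beta_{i+1,j+1} & (i \ge 2), \\
 x_0^{-1} \beta_{1,j} x_0 &= \beta_{2,j+1}\beta_{1,j+1}. &
\end{align*}
All of these lie in $\Fbr(1)$, so $\phi\colon \Fbr(1)\to\Fbr(1)$ is a well-defined endomorphism; it is injective because it is the restriction to $\Fbr(1)$ of the inner automorphism of $\Fbr$ given by conjugation by $x_0$.

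\emph{Step 2 (Universal map).} Let $G=\Fbr(1)*_{\phi}$ be the abstract ascending HNN extension with stable letter $t$, and let $\pi\colon G\to\Fbr$ be the homomorphism sending $t\mapsto x_0$ and fixing $\Fbr(1)$ pointwise. This is well-defined by the construction of $\phi$, and surjective since $\Fbr$ is generated by $\Fbr(1)\cup\{x_0\}$.

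\emph{Step 3 (Injectivity, via the quotient to $F$).} By the normal form for ascending HNN extensions, every element of $G$ can be written as $t^a h t^{-b}$ with $a,b\ge 0$ and $h\in\Fbr(1)$. Suppose $\pi(t^a h t^{-b})=x_0^a h x_0^{-b}=1$ in $\Fbr$. Composing with the quotient $\Fbr\onto F$ (with kernel $\Pbr$) yields $x_0^a \bar h x_0^{-b}=1$ in $F$, where $\bar h\in F(1)$ is the image of $h$. Since $F=F(1)*_{x_0}$ is ascending HNN with $F(1)\cap\langle x_0\rangle=\{1\}$, this forces $a=b$ and $\bar h=1$. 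Back in $\Fbr$ we then have $x_0^a h x_0^{-a}=1$, so $h=1$ in $\Fbr$, hence in $\Fbr(1)$ (which embeds in $\Fbr$), and finally in $G$ (which contains $\Fbr(1)$ by Britton). Therefore $t^a h t^{-b}=t^a\cdot 1\cdot t^{-a}=1$ in $G$, proving $\pi$ is injective.

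The main obstacle is Step 1: the presentation has many families of relations (D1)--(D9), and one must identify precisely which specialize to give $x_0$-conjugation formulas with target in $\Fbr(1)$. Once those formulas are in hand, the remaining argument is formal, piggybacking on the known ascending HNN structure of $F$ to detect triviality in $\Fbr$.
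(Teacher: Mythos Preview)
Your proof is correct and follows essentially the same approach as the paper's: build the canonical map from the abstract HNN extension, write a kernel element in normal form $t^a h t^{-b}$, use an exponent count to force $a=b$, and then observe that $\Psi$ restricted to $\Fbr(1)$ is the identity so $h=1$. The only cosmetic differences are that you explicitly verify $x_0^{-1}\Fbr(1)x_0\subseteq\Fbr(1)$ from the relations (the paper leaves this implicit), and you route the exponent count through the quotient $\Fbr\onto F$ and the known HNN structure of $F$, whereas the paper uses directly the character $\Fbr(1)*_\theta\onto\Z$ reading $0$ on $\Fbr(1)$ and $1$ on $t$ --- your extra conclusion that $\bar h=1$ in $F$ is not needed, since $a=b$ already gives $h=1$ in $\Fbr$.
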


\begin{proof}
 An initial proof involved establishing a presentation for $\Fbr(1)$, and was more involved; the following faster proof is inspired by helpful discussions with Robert Bieri and Matt Brin.

 Let $\theta \colon \Fbr(1) \hookrightarrow \Fbr(1)$ be the monomorphism given by right conjugation by $x_0$. There is an epimorphism $\Psi$ from the abstract HNN-extension $\Fbr(1) \ast_\theta$, with stable element $t$, to $\Fbr$ given by specializing $t$ to $x_0$, so $\Psi$ is the identity map on $\Fbr(1)$. We need to check that $\Psi$ is injective. Let $g\in\ker(\Psi)$. Since $g$ is an element of $\Fbr(1) \ast_\theta$, and since the HNN-extension is ascending, we can write $g$ in the form $g=t^n h t^m$ for $h\in\Fbr(1)$, $n\ge0$ and $m\le0$. Also, there is an epimorphism $\Fbr(1) \ast_\theta \onto \Z$ that reads $0$ on $\Fbr(1)$ and $1$ on $t$, and factors through $\Psi$; this tells us that $m=-n$. Hence $h$ is itself in $\ker(\Psi)$. But $\Psi\mid_{\Fbr(1)}$ is the identity, so $h=1$ and we are done.
\end{proof}

\subsection{Abelianization and characters}\label{sec:chars}

The first observation of this subsection is about $\Vbr$, and ensures that any future discussion about abelian quotients and characters will be uninteresting for $\Vbr$.

\begin{observation}\label{obs:BV_perfect}
 The group $\Vbr$ is perfect.
\end{observation}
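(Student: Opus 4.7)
The plan is to compute the abelianization $\Vbr^{\mathrm{ab}} \defeq \Vbr/[\Vbr,\Vbr]$ directly from the infinite presentation given in Section~\ref{sec:presentations} and verify that it vanishes. I will write $\Vbr^{\mathrm{ab}}$ additively and denote by $X_i$, $S_i$, $T_i$ the respective images of $x_i$, $\sigma_i$, $\tau_i$ under $\Vbr \onto \Vbr^{\mathrm{ab}}$.

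First I will use the relations internal to the braid-type generators to collapse most of the redundancy. Relation (b2) yields $S_i = S_{i+1}$ for each $i \geq 1$, so every $S_i$ equals a single value $S$; relation (b4) gives $T_{i+1} = S_i = S$ for each $i \geq 1$, so $T_j = S$ for all $j \geq 2$; and relation (A) with $i = 0$ gives $X_j = X_{j+1}$ for each $j \geq 1$, so all $X_j$ with $j \geq 1$ share a common value.

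The pivotal collapse then comes from (c2), namely $\sigma_i x_i = x_{i-1}\sigma_{i+1}\sigma_i$, which abelianizes to $X_i = X_{i-1} + S$. Applied at $i = 2$ and combined with $X_1 = X_2$ from the previous step, this forces $S = 0$, killing every $S_i$ and every $T_j$ with $j \geq 2$; applied next at $i = 1$ it gives $X_0 = X_1$, so all $X_i$ are equal to a single value $X$. The finishing move is to invoke relation (d2), $\tau_i x_{i-1} = \sigma_i\tau_{i+1}$, at $i = 2$: abelianizing and substituting $T_2 = T_3 = S_2 = 0$ collapses it to $X_1 = 0$, so $X = 0$ and every $X_i$ vanishes. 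Relation (d2) at $i = 1$ then reads $T_1 + X_0 = S_1 + T_2$, which, with all right-hand values already zero, forces $T_1 = 0$.

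I do not foresee any real obstacle here; the whole argument is a mechanical, presentation-level abelianization calculation. The only mild subtlety is to deploy the relations in the right order, so that (A), (b2) and (b4) have already identified the various $X_j$, $S_i$ and $T_j$ before (c2) at $i = 2$ is used to kill $S$, and before (d2) at $i = 2$ is used to kill $X$.
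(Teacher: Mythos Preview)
Your argument is correct and follows essentially the same approach as the paper: abelianize the infinite presentation and chase the relations until every generator dies. The paper gives only a one-line sketch (hinting at combining (d2) and (d3) to kill the $x_i$), whereas you work out the details in full and use (c2) at $i=2$ together with (A) to kill $S$ first, then (d2) to kill $X$ and $T_1$; these are minor differences in the order of elimination, not a different method.
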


This is an easy exercise in abelianizing the presentation for $\Vbr$ from the previous subsection and checking that every generator becomes trivial.

One can also abelianize the presentation for $\Fbr$ without too much difficulty. For reference we will describe the steps in the following lemma. Here, bars indicate the images of elements in the abelianization.

\begin{lemma}
 The abelianization of $\Fbr$ is generated by $\overline{x}_0$, $\overline{x}_1$, $\overline{\beta}_{1,3}$ and $\overline{\alpha}_{1,2}$.
\end{lemma}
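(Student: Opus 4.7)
The plan is to directly abelianize the infinite presentation of $\Fbr$ from Section~\ref{sec:presentations}, showing that every generator's image lies in the subgroup $H=\langle\overline{x}_0,\overline{x}_1,\overline{\beta}_{1,3},\overline{\alpha}_{1,2}\rangle$. All of the (B) relations are conjugation relations, so they contribute nothing in the abelianization; only (A), (C) and the (D) family give nontrivial equalities among the $\overline{x}_i,\overline{\alpha}_{i,j},\overline{\beta}_{i,j}$.

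First I would handle the $\overline{x}_i$. Abelianizing (A) with $i=0$ gives $\overline{x}_j=\overline{x}_{j+1}$ for every $j\ge 1$, so every $\overline{x}_i$ lies in $\langle\overline{x}_0,\overline{x}_1\rangle\subseteq H$. Next I would dispatch the $\overline{\alpha}_{i,j}$. The abelianized (D4) reads $\overline{\alpha}_{i,j+1}=0$ whenever $1\le i<j$, so every off-diagonal $\overline{\alpha}_{i,k}$ with $k\ge i+2$ is trivial. The remaining diagonal entries $\overline{\alpha}_{i,i+1}$ are then identified using (D1), which gives $\overline{\alpha}_{i,i+1}=\overline{\alpha}_{i+1,i+2}$ for $i\ge 2$, together with (D2) at $i=k=1$, $j=2$, which reads $\overline{\alpha}_{1,2}=\overline{\alpha}_{2,3}+\overline{\alpha}_{1,3}=\overline{\alpha}_{2,3}$ once we feed in the previous step; inductively every $\overline{\alpha}_{i,i+1}=\overline{\alpha}_{1,2}\in H$.

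For the $\overline{\beta}_{i,j}$ I would proceed in two sub-steps. First, relation (C) abelianizes to $\overline{\beta}_{i,j}=\overline{\beta}_{i,j+1}+\overline{\alpha}_{i,j}$; applied with $i=1$ and $j\ge 3$ this collapses $\overline{\beta}_{1,j}$ (for all $j\ge 3$) to $\overline{\beta}_{1,3}$, and with $j=2$ it puts $\overline{\beta}_{1,2}=\overline{\beta}_{1,3}+\overline{\alpha}_{1,2}$ into $H$. To reach $\overline{\beta}_{i,j}$ for $i\ge 2$, I would use (D7) with $k=i=1$, rearranged as $\overline{\beta}_{2,j+1}=\overline{\beta}_{1,j}-\overline{\beta}_{1,j+1}$ for all $j\ge 2$, which places every $\overline{\beta}_{2,j}$ in $H$. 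Finally (D6) iterates as $\overline{\beta}_{i,j}=\overline{\beta}_{i+1,j+1}$ (valid for $i\ge 2$), bringing any $\overline{\beta}_{i,j}$ with $i\ge 2$ down to $\overline{\beta}_{2,j-i+2}\in H$.

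The main ``obstacle'' is purely bookkeeping of index ranges. Several (D) relations require a valid $k$ with $1\le k<i$, so they are unavailable at $i=1$; one therefore has to select, each time, a companion relation of the form $k=i$ (namely (D2) or (D7)) as a bridge across the boundary case $i=1$. Once that pattern is spotted, no individual step is more than a one-line abelianization of one listed relation.
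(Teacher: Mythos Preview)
Your proof is correct and follows essentially the same approach as the paper: both abelianize the presentation and chase the (A), (C), (D) relations to express every generator in terms of the four listed. The only difference is the order in which the relations are invoked; for instance, the paper first uses (D1), (D3), (D6), (D8) to cut down to a finite list of $\overline{\alpha}$'s and $\overline{\beta}$'s and then kills the surplus, whereas you first use (D4) to annihilate the off-diagonal $\overline{\alpha}_{i,j}$ directly, but the computations are interchangeable.
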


\begin{proof}
 We start with generators $\overline{x}_i$ ($0\le i$), $\overline{\alpha}_{i,j}$ ($1\le i<j$) and $\overline{\beta}_{i,j}$ ($1\le i<j$). Relation (A) tells us that $\overline{x}_i=\overline{x}_1$ for all $i\ge2$. Relations (D1), (D3), (D6) and (D8) tell us that each $\overline{\alpha}_{k,\ell}$ equals $\overline{\alpha}_{i,j}$ for some
 $$(i,j) \in \{(1,2),(1,3),(2,3),(2,4)\} \text{,}$$
 with a similar statement for the $\overline{\beta}_{k,\ell}$. So far we have reduced down to ten generators. We will show that six of them are redundant, leaving the four in the statement of the lemma.
 
 From (D4) we see that $\overline{\alpha}_{1,3}=0$ and $\overline{\alpha}_{2,4}=0$. Then (D2) says that $\overline{\alpha}_{2,3} = \overline{\alpha}_{1,2}$. From (C) and (D7) we see that $\overline{\beta}_{i+1,j+1}=\overline{\alpha}_{i,j}$ for all $1\le i<j$, which implies that $\overline{\beta}_{2,4}=0$ and $\overline{\beta}_{2,3}=\overline{\alpha}_{1,2}$. Finally, (C) says that $\overline{\beta}_{1,2}=\overline{\beta}_{1,2}+\overline{\alpha}_{1,2}$.
\end{proof}

In order to prove that these four generators are in fact linearly independent, and so $\Fbr$ abelianizes to $\Z^4$,  we first describe four discrete characters of $\Fbr$, denoted $\phi_0$, $\phi_1$, $\omega_0$ and $\omega_1$, which will be dual to certain combinations of these generators. Recall that a \emph{character} of a group is a homomorphism from the group to the additive real numbers, and a character is \emph{discrete} if its image is isomorphic to $\Z$.

First, note that $\Fbr$ acts on $[0,1]$ via the map $\pi\colon\Fbr\onto F$. A standard basis for $\Hom(F,\R)\cong \R^2$ is $\{\chi_0, \chi_1\}$, where $\chi_i(f)\defeq \log_2(f'(i))$ \cite{bieri10}. Hence we get two linearly independent characters for $\Fbr$ by composing, namely:
$$\phi_0\defeq \chi_0 \circ \pi \text{ and } \phi_1\defeq \chi_1 \circ \pi\text{.}$$

The values of $\phi_0$ and $\phi_1$ can be read off a representative triple $(T_-,p,T_+)$ for an element of $\Fbr$. For a tree $T$, thought of as a metric graph with edge lengths all $1$, let $r$ be the root, $\ell_\ell$ the leftmost leaf and $\ell_r$ the rightmost leaf. Let $L(T)$ be the length of the reduced edge path from $r$ to $\ell_\ell$, and $R(T)$ the length of the reduced edge path from $r$ to $\ell_r$. Then
$$\phi_0(T_-,p,T_+) = L(T_+)-L(T_-) \text{ and } \phi_1(T_-,p,T_+) = R(T_+)-R(T_-)\text{.}$$

The characters $\phi_0$ and $\phi_1$ both have $\Pbr$ in their kernels. To find the missing two dimensions in what we will eventually see is $\Hom(\Fbr,\R)\cong \R^4$, we now look at characters that can detect braiding. First, let $\omega_0$ be the character that takes an element $(T_-,p,T_+)$ and reads off the total winding number of the first and last strands of $p$ around each other. This is invariant under reduction and expansion, and so is well defined. Finally, let $\omega_1$ be the character that reads off the sum of the total winding numbers of adjacent strands of $p$, i.e., $1$ and around $2$, plus $2$ around $3$, etc. This is again invariant under reduction and expansion, so is well defined.

As an example of these measurements, one can compute that $\phi_0(g)=1$, $\phi_1(g)=0$, $\omega_0(g)=1$ and $\omega_1(g)=-1$ for the element $g$ pictured in Figure~\ref{fig:BF_measurements}.

\begin{figure}[htb]
 \centering
  \begin{tikzpicture}[line width = 1pt]
   \coordinate (A) at (0,0);
   \coordinate (B) at (-0.333,-0.333);
   \coordinate (C) at (-1,-1);
   \coordinate (D) at (0,-0.667);
   \coordinate (E) at (-0.333,-1);
   \coordinate (F) at (0.333,-1);
   \coordinate (G) at (1,-1);
   
   \coordinate (H) at (0,-5);
   \coordinate (I) at (-0.333,-4.667);
   \coordinate (J) at (-0.667,-4.333);
   \coordinate (K) at (-1,-4);
   \coordinate (L) at (-0.333,-4);
   \coordinate (M) at (0.333,-4);
   \coordinate (N) at (1,-4);
   \coordinate (el) at (-0.333,-2.5);
   \coordinate (ck) at (1.333,-2.5);
   
   \draw (C) -- (A) -- (G)   (B) -- (F)   (E) -- (D);
   \draw (K) -- (H) -- (N)   (M) -- (I)   (L) -- (J);
   
   \draw (F) -- (M)   (el) -- (L);
   \draw[white, line width=3pt] (K) to [out=90, in=-90] (ck);
   \draw (K) to [out=90, in=-90] (ck);
   \draw[white, line width=3pt] (G) -- (N);
   \draw (G) -- (N);
   \draw[white, line width=3pt] (C) to [out=-90, in=90] (ck);
   \draw (C) to [out=-90, in=90] (ck);
   \draw[white, line width=3pt] (E) -- (el);
   \draw (E) -- (el);
  \end{tikzpicture}
 \caption{For the pictured element $g$, we have $\phi_0(g)=1$, $\phi_1(g)=0$, $\omega_0(g)=1$ and $\omega_1(g)=-1$.}
 \label{fig:BF_measurements}
\end{figure}
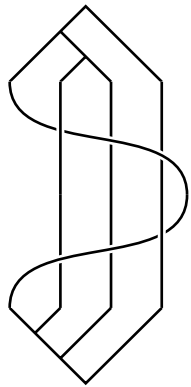

\begin{lemma}\label{lem:Hom(BF,R)}
 The characters $(\phi_0, \phi_1, \omega_0, \omega_1)$ form a basis for $\Hom(\Fbr,\R)\cong\R^4$, the elements $(\overline{x}_1 - \overline{x}_0, -\overline{x}_1, \overline{\beta}_{1,3}, \overline{\alpha}_{1,2})$ form a basis for the abelianization $\Z^4$ of $\Fbr$, and these bases are dual.
\end{lemma}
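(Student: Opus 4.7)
The strategy is to compute the $4\times 4$ matrix of pairings of the four proposed characters $(\phi_0, \phi_1, \omega_0, \omega_1)$ against the four proposed generators $(\overline{x}_1 - \overline{x}_0, -\overline{x}_1, \overline{\beta}_{1,3}, \overline{\alpha}_{1,2})$ of the abelianization, and to check that it is the identity. The previous lemma already shows that those four elements generate the abelianization; combined with an identity pairing matrix, this forces $\Z$-linear independence, so the abelianization is free of rank $4$ on that basis, and it simultaneously forces the four characters to span a $4$-dimensional subspace of $\Hom(\Fbr, \R)$. Since the abelianization has rank $4$, the space $\Hom(\Fbr, \R)$ is at most $4$-dimensional, hence exactly $\R^4$, and the pairing matrix being the identity is precisely the dual-basis claim.

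Before assembling this matrix I would first verify that $\omega_0$ and $\omega_1$ are well-defined homomorphisms $\Fbr \to \Z$. On each $PB_n$ the winding number of any fixed pair of strands is a standard homomorphism to $\Z$, so $\omega_0$ and $\omega_1$ are homomorphisms on each $PB_n$; the content is invariance under cloning. A cloning replaces one strand by two parallel copies, which have winding $0$ around each other and reproduce the original windings with every other strand. For $\omega_0$, cloning an interior strand is transparent, while cloning the first (resp.\ last) strand merely promotes a clone to the role of ``first'' (resp.\ ``last''), with the same winding against the opposite end. For $\omega_1$, cloning strand $k$ introduces a new $(k,k{+}1)$-adjacency of winding zero, and the new adjacencies $(k{-}1,k)$ and $(k{+}1,k{+}2)$ inherit exactly the old $(k{-}1,k)$ and $(k,k{+}1)$ windings, so the total over adjacent pairs is preserved.

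To fill in the matrix, I use that $\phi_0$ and $\phi_1$ factor through $\Fbr \onto F$, hence vanish on $\overline{\alpha}_{1,2}$ and $\overline{\beta}_{1,3}$ (both in $\Pbr$); dually, $\omega_0$ and $\omega_1$ vanish on $\overline{x}_0$ and $\overline{x}_1$, since the natural copy of $F$ in $\Fbr$ has no braiding. For the $\phi_i$-block against $(\overline{x}_0,\overline{x}_1)$, the tree formulas (or equivalently the slopes of $x_0, x_1$ at $0$ and $1$) produce a unimodular $2\times 2$ matrix, and one checks that the change of basis to $(\overline{x}_1 - \overline{x}_0, -\overline{x}_1)$ turns this block into the identity. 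For the remaining block, inspecting $\alpha_{1,2} = \sigma_1^2$ on three strands (a full twist of strands $1$ and $2$, strand $3$ idle) gives $\omega_0(\overline{\alpha}_{1,2}) = 0$ and $\omega_1(\overline{\alpha}_{1,2}) = 1$, while $\beta_{1,3} = \sigma_1 \tau_2^2 \sigma_1^{-1}$ on three strands makes strand $1$ wind once around strand $3$ with no net winding between adjacent pairs, giving $\omega_0(\overline{\beta}_{1,3}) = 1$ and $\omega_1(\overline{\beta}_{1,3}) = 0$. The only obstacle I anticipate is bookkeeping of sign and orientation conventions — for the tree formulas defining $\phi_0, \phi_1$ and for the winding sign of $\omega_0, \omega_1$ — so that the $x_i$-block really lands on the identity after the prescribed change of basis; with conventions fixed, the rest is immediate.
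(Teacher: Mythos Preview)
Your proposal is correct and follows essentially the same approach as the paper: both verify that the $4\times 4$ pairing matrix $f_i(e_j)$ is the identity (the paper simply asserts ``one can check that $f_i(e_j)=\delta_{ij}$'' while you spell out the entries), and both combine this with the previous lemma to conclude the abelianization is $\Z^4$ with the indicated dual bases. Your discussion of the well-definedness of $\omega_0$ and $\omega_1$ under cloning is handled in the paper in the paragraphs preceding the lemma rather than in the proof itself, but otherwise the arguments coincide.
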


\begin{proof}
 Set $f_1\defeq \phi_0$, $f_2\defeq \phi_1$, $f_3\defeq \omega_0$ and $f_4\defeq \omega_1$. Also set $e_1\defeq \overline{x}_1 - \overline{x}_0$, $e_2\defeq -\overline{x}_1$, $e_3\defeq \overline{\beta}_{1,3}$ and $e_4 \defeq \overline{\alpha}_{1,2}$. Then for $1\le i,j\le 4$ one can check that $f_i(e_j)=\delta_{ij}$, the Kronecker delta. Here we have extended the definitions of the characters to accepting inputs from the abelianization, which is fine since they vanish on the commutator subgroup. This proves that the $e_i$ are linearly independent, so form a basis. From this it follows that $\Hom(\Fbr,\R)\cong \R^4$, and since the $f_i$ are linearly independent they form a basis.
\end{proof}

As a remark, most winding number measurements are \emph{not} invariant under reduction and expansion, for instance the total winding number of the first and second strands; on $\beta_{1,2}$ this measurement reads $1$, but when we bifurcate the first strand, we get a braid in which the first and second strands do not wind (indeed are parallel), so the measurement reads $0$. Of course now the second and third strands wind, and $\omega_1$ still reads $1$, as it is a sum over all pairs of adjacent strands. The important point is that the designations ``second,'' ``third,'' etc. are not well behaved under cloning, but ``next,'' ``first'' and ``last'' are.

We should point out that we now have an easy algorithm to check whether an element of $\Fbr$ is in $[\Fbr,\Fbr]$, namely if and only if it lies in the kernels of $\phi_0$, $\phi_1$, $\omega_0$ and $\omega_1$.

\section{An Alternative result}\label{sec:alternative}

This section is almost entirely about $\Fbr$, with implications for $\Vbr$ relegated to the end. The upshot for $\Vbr$ is Corollary~\ref{cor:BV_alt}, which says that every proper normal subgroup is contained in $\Pbr$.

The main result of this section is the following:

\begin{theorem}[$\Fbr$ Alternative]\label{thrm:alt}
 Let $N$ be a normal subgroup of $\Fbr$. Then either $N\le \Pbr$ or else $[\Fbr,\Fbr]\le N$.
\end{theorem}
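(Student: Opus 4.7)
The strategy is to prove the contrapositive: assuming $N \not\le \Pbr$, I will show $\Fbr/N$ is abelian, equivalently $[\Fbr,\Fbr] \le N$. First, the image $\pi(N)$ is a nontrivial normal subgroup of Thompson's $F$, so by the classical result that $[F,F]$ is simple and every nontrivial normal subgroup of $F$ contains $[F,F]$, one has $\pi(N) \supseteq [F,F]$, making $F/\pi(N)$ abelian. The short exact sequence $1 \to \Pbr \to \Fbr \to F \to 1$ descends to
\[
 1 \to \Pbr/(N\cap\Pbr) \to \Fbr/N \to F/\pi(N) \to 1.
\]

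For $\Fbr/N$ to be abelian, the kernel must be central (equivalently $[\Fbr,\Pbr] \le N$) and the commutator map $F\times F \to \Fbr/N$ must vanish (equivalently $[F,F] \le N$, viewing $F \hookrightarrow \Fbr$ via the splitting). The bulk of the work is establishing $[\Fbr,\Pbr] \le N$. Pick any $n \in N\setminus\Pbr$ and write $n = fq$ with $f\in F\setminus\{1\}$, $q \in \Pbr$; expanding in the semidirect product gives
\[
 [n,p] \;=\; {}^f[q,p] \cdot [f,p] \;\in\; N\cap\Pbr
\]
for every $p \in \Pbr$. I plan to use Observation~\ref{obs:Pbr_gens}, the infinite family $\{x_i\}$, and the ascending HNN decomposition of Lemma~\ref{lem:hnn} to vary $f$ (which may range over $\pi(N) \supseteq [F,F]$) and $p$ so that they have disjoint supports in $[0,1]$, forcing $[f,p] = 1$ and hence $[q,p] \in N$ by normality. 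Running this over enough configurations populates $N\cap\Pbr$ with all of $[\Pbr,\Pbr]$, and the displayed identity then yields $[F,\Pbr]\le N$ as well, so $[\Fbr,\Pbr] = [F,\Pbr]\cdot[\Pbr,\Pbr] \le N$.

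With $[\Fbr,\Pbr] \le N$ in hand, the quotient $\Fbr/[\Fbr,\Pbr]$ is a central extension of $F$ by $\Pbr/[\Fbr,\Pbr] \cong \Z^2$ (the $\Z^2$ detected by $\omega_0,\omega_1$); the splitting $F \hookrightarrow \Fbr$ produces a homomorphic splitting of this central extension, so $\Fbr/[\Fbr,\Pbr] \cong F \times \Z^2$. The group $\Fbr/N$ is therefore a quotient of $F\times\Z^2$ by the image $\overline{N}$ of $N$, and $\overline{N}$ has $F$-projection containing $[F,F]$. A short additional argument, again exploiting that commutators $[n,g]$ with $n\in N$, $g\in \Fbr$ lie in $N$, shows that $\overline{N}\supseteq [F,F]\times\{0\}$, so $(F\times\Z^2)/\overline{N}$ is abelian and we are done.

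The main obstacle is the commutator extraction in the second paragraph: the term ${}^f[q,p]$ entangles the undesired $[q,p]$ with the desired $[f,p]$, and disentangling them requires a systematic use of the disjoint-support flexibility afforded by the direct-limit structure $\Pbr = \varinjlim PB_{n,T}$. Verifying that the disjoint-support configurations generate enough of $[\Fbr,\Pbr]$ is the technical heart of the argument.
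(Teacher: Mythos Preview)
Your structural reductions are correct: $\Fbr/[\Fbr,\Pbr]\cong F\times\Z^2$ (this follows from the splitting together with Lemma~\ref{lem:Hom(BF,R)}), and the final paragraph deducing $[F,F]\times\{0\}\subseteq\overline{N}$ from $\pi(N)\supseteq[F,F]$ is fine. The difficulty, as you yourself flag, is entirely in the second paragraph, and there are two concrete gaps there.

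First, your extractor yields $[q,p]\in N$ only for the \emph{specific} $q$ attached to a chosen $n=fq\in N$, and only for those $p$ commuting with that particular $f$. You cannot vary $q$ freely: different choices of $n\in N$ give different $q$'s, each entangled with its own $f$. Nothing in the proposal explains why the commutators so obtained should normally generate all of $[\Pbr,\Pbr]$. (If for instance $n\in N\cap F$, so $q=1$, this $n$ contributes nothing; you would first have to conjugate by $\Pbr$ to manufacture a nontrivial $q$, but then you are back to an uncontrolled $q$.) Since normal closures of elements of $\Pbr$ stay inside $\Pbr$, there is no obvious bootstrapping mechanism.

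Second, even granting $[\Pbr,\Pbr]\le N$, the displayed identity gives $[f,p]\in N$ only for $f\in\pi(N)$; in the worst case $\pi(N)=[F,F]$, so you obtain $[[F,F],\Pbr]\le N$, not $[F,\Pbr]\le N$. These differ: by (D2) one has $x_0^{-1}\alpha_{1,2}x_0=\alpha_{2,3}\alpha_{1,3}$, so $F/[F,F]$ acts nontrivially on $\Pbr^{ab}$, and $[x_0,\alpha_{1,2}]$ is not visibly in $[[F,F],\Pbr]\cdot[\Pbr,\Pbr]$. Without the full $[F,\Pbr]\le N$, your third paragraph does not apply.

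The paper's proof inverts your trick. Instead of seeking $p\in\Pbr$ commuting with $f$ in order to catch $[q,p]\in N\cap\Pbr$, it finds $h\in F$ commuting with $q$ but \emph{not} with $f$ (Lemma~\ref{lem:conjugator}, via Proposition~\ref{prop:commuting}), so that $t^{-1}(hth^{-1})=f^{-1}hfh^{-1}$ lands in $N\cap F\setminus\{1\}$. This is decisive because a single nontrivial element of $N\cap F$ already normally generates $[\Fbr,\Fbr]$ by Proposition~\ref{prop:normal_closure_F}, a direct calculation with the presentation. Catching an $F$-element is far more powerful than catching a $\Pbr$-element precisely because conjugating the former by all of $\Fbr$---including by $\Pbr$---gives access to everything, whereas the normal closure of the latter is trapped inside $\Pbr$.
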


By an ``Alternative result'' we mean a statement that any subgroup (or here any normal subgroup) must have one of two ``quite different'' forms. For example, the classical Tits Alternative for a group says that any subgroup either contains a non-abelian free group, or else is virtually solvable (note that Thompson's groups do not satisfy the Tits Alternative).

A quick corollary to Theorem~\ref{thrm:alt} is the following:

\begin{corollary}\label{cor:BF'_perfect}
 The commutator subgroup $[\Fbr,\Fbr]$ is perfect.
\end{corollary}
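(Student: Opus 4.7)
The plan is to apply the Alternative (Theorem~\ref{thrm:alt}) to the second derived subgroup $N \defeq [[\Fbr,\Fbr],[\Fbr,\Fbr]]$. This $N$ is characteristic in $[\Fbr,\Fbr]$, and $[\Fbr,\Fbr]$ is normal in $\Fbr$, so $N$ is itself normal in $\Fbr$. The Alternative then leaves two cases: either $[\Fbr,\Fbr]\le N$, or $N\le \Pbr$.

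In the first case we are immediately done, since by definition $N\le [\Fbr,\Fbr]$, so the inclusion $[\Fbr,\Fbr]\le N$ forces equality, i.e., $[\Fbr,\Fbr]=[[\Fbr,\Fbr],[\Fbr,\Fbr]]$, which is exactly perfectness. So the content of the argument is to rule out the second case.

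To exclude the possibility $N\le\Pbr$, I would exploit the short exact sequence $1\to \Pbr\to \Fbr \onto F\to 1$. Under the surjection $\Fbr\onto F$, the image of $[\Fbr,\Fbr]$ is $[F,F]$, and the kernel of the restriction is $[\Fbr,\Fbr]\cap \Pbr$, giving
\[
[\Fbr,\Fbr]/([\Fbr,\Fbr]\cap \Pbr) \;\cong\; [F,F].
\]
If $N\le\Pbr$, then $N\le [\Fbr,\Fbr]\cap\Pbr$, so the quotient $[\Fbr,\Fbr]/N$ surjects onto $[F,F]$. But $[\Fbr,\Fbr]/N$ is abelian by the very definition of $N$, so $[F,F]$ would be abelian, contradicting the classical fact that $[F,F]$ is (infinite and) simple, hence non-abelian.

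The main (only) obstacle is this exclusion step, and the key observation that makes it easy is that the commutator subgroup of $\Fbr$ maps onto the commutator subgroup of $F$ under the quotient $\Fbr\onto F$, so any abelian quotient of $[\Fbr,\Fbr]$ would force an abelian quotient of $[F,F]$. Everything else is formal manipulation of the Alternative.
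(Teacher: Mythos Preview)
Your proof is correct and follows essentially the same approach as the paper: apply the Alternative to $N=[[\Fbr,\Fbr],[\Fbr,\Fbr]]$ and rule out $N\le\Pbr$ using that $[F,F]$ is non-abelian. The paper phrases the exclusion step more directly, simply observing that $N$ contains $[[F,F],[F,F]]$, which is a non-trivial subgroup of $F$ and hence not in $\Pbr$; your contrapositive formulation via the quotient $[\Fbr,\Fbr]/N\onto [F,F]$ is equivalent.
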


\begin{proof}
 We just need to show that $[[\Fbr,\Fbr],[\Fbr,\Fbr]]$ is not contained in $\Pbr$. But $[[\Fbr,\Fbr],[\Fbr,\Fbr]]$ contains $[[F,F],[F,F]]$, so this is clear.
\end{proof}

Note that $[\Fbr,\Fbr]$ is not simple, as it has $[\Pbr,\Pbr]$ as a proper non-trivial normal subgroup.

There are various Alternatives known for $F$ of the form, ``every subgroup of $F$ either has property $\mathcal{P}$, or else contains a copy of $G$,'' for some property $\mathcal{P}$ and some subgroup $G$. Examples of this phenomenon include:

\begin{enumerate}
 \item For $\mathcal{P}$ the property of being abelian, $G$ is $\Z^\infty$ \cite{brin85}.
 
 \item For $\mathcal{P}$ the property of being solvable, $G$ is Bleak's group $W$ \cite{bleak09}.
 
 \item The Brin--Sapir Conjecture is that this phenomenon occurs for $\mathcal{P}$ being the property of being elementary amenable and $G$ being $F$ \cite[Conjecture~3]{brin05}.
\end{enumerate}

In general, understanding the subgroups of $F$ is an active and ongoing endeavor.

Returning to the task at hand, to prove Theorem~\ref{thrm:alt}, we begin with a technical proposition about the normal closure of elements of $F$ in $\Fbr$.

\begin{proposition}\label{prop:normal_closure_F}
 Let $1\neq f\in F\le \Fbr$. The normal closure $N\defeq \langle\!\langle f\rangle\!\rangle$ of $f$ in $\Fbr$ contains $[\Fbr,\Fbr]$.
\end{proposition}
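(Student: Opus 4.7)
The plan is to first reduce the statement to showing that a certain concrete quotient of $\Fbr$ is abelian, and then to verify abelianness directly from the infinite presentation.

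For the reduction, I would invoke the classical fact that every non-trivial normal subgroup of $F$ contains $[F,F]$ (immediate from $F/[F,F]=\Z^2$ and the simplicity of $[F,F]$). This gives $\langle\langle f\rangle\rangle_F\supseteq [F,F]$, hence $[F,F]\le N$, and then $N\supseteq\langle\langle [F,F]\rangle\rangle_{\Fbr}$ since $N$ is normal in $\Fbr$. Because $\langle\langle [F,F]\rangle\rangle_{\Fbr}\subseteq [\Fbr,\Fbr]$ is automatic (images vanish in $\Fbr^{ab}=\Z^4$), it suffices to show that $Q\defeq \Fbr/\langle\langle [F,F]\rangle\rangle_{\Fbr}$ is abelian.

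To check this I would work inside $Q$ using the infinite presentation from Section~\ref{sec:presentations}. The image of $F$ in $Q$ is abelian, so $[\bar x_0,\bar x_1]=1$; combining with (A) collapses $\bar x_1=\bar x_2=\cdots$ to a common element $\bar x$, so $\bar F=\langle\bar x_0,\bar x\rangle\cong\Z^2$. Then I would collapse the braid generators in stages. Comparing (D3) (with $k=i+1$, which requires $j\ge i+2$) and (D4) (with $k=j$)---both become $\bar\alpha_{i,j}\,\bar x=\bar x\,\bar\alpha_{i,j+1}(\cdots)$ in $Q$---forces $\bar\alpha_{i,j}=1$ whenever $j\ge i+2$. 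Next, relations (D1) and (D2) identify each surviving $\bar\alpha_{i,i+1}$ as an $\bar x_0$-conjugate of $\bar\alpha_{1,2}$, while (D1) with $k=2$ (together with (D5)) produces a parallel expression as an $\bar x$-conjugate; equating the two forces $\bar\alpha_{1,2}$ to commute with $\bar x_0$, so all $\bar\alpha_{i,i+1}$ collapse to a single element $\bar\alpha$ centralizing $\bar F$. A parallel analysis, using (D8), (D9) and then (D6), (D7) combined with (C), and now using that $\bar\alpha$ commutes with $\bar F$, shows $\bar\beta_{i,j}$ stabilizes in $j$ for $j\ge i+2$, trivializes $\bar\beta_{i,j}$ for $i\ge 2$, and leaves the single generator $\bar\beta_{1,3}$ centralizing $\bar F$. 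Finally, relation (B6) with $r=1$, $s=i=2$, $j=3$, once simplified via $\bar\beta_{2,3}=\bar\alpha$ (from (C) with $\bar\beta_{2,4}=1$ and $\bar\alpha_{2,3}=\bar\alpha$) and $\bar\beta_{1,3}=\bar\beta_{1,3}$ (with $\bar\alpha_{1,3}=1$), reads $\bar\alpha=\bar\beta_{1,3}\,\bar\alpha\,\bar\beta_{1,3}^{-1}$, so $\bar\alpha$ and $\bar\beta_{1,3}$ commute. Thus $Q$ is generated by four pairwise commuting elements $\bar x_0,\bar x,\bar\alpha,\bar\beta_{1,3}$ and is abelian (in fact $Q\cong\Z^4$, matching $\Fbr^{ab}$), which gives the desired inclusion $[\Fbr,\Fbr]\le N$.

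The hard part will be the bookkeeping in the third step: the (D) and (B) relations have overlapping but distinct valid index ranges, so they must be applied in a coherent order, and the chain that produces $\bar\beta_{2,4}=1$ via (D6), (D7) depends on first having $\bar\alpha$ commute with $\bar x_0$, which in turn depends on the earlier collapsing of the $\bar\alpha_{i,j}$ for $j\ge i+2$.
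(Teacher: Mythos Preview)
Your proposal is correct and follows essentially the same route as the paper. Both arguments first reduce to $[F,F]\le N$ and then show that a certain quotient of $\Fbr$ is abelian by manipulating the (D) and (B/C) relations until only four commuting generators survive. The only organizational difference is that the paper first performs explicit conjugations in $\Fbr$ to force specific elements $\alpha_{i,j}$ ($j\ge i+2$) and $\beta_{i,j}$ ($i\ge 2$, $j\ge i+2$) into $N$, and only then passes to a quotient, whereas you pass immediately to $Q=\Fbr/\langle\langle[F,F]\rangle\rangle$ and derive the triviality of those same elements there (your comparison of (D3) and (D4) in $Q$ is exactly the paper's conjugation $\alpha_{i,i+1}(x_i x_{i+2}^{-1})\alpha_{i,i+1}^{-1}$, read in the quotient). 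Your packaging is arguably a bit cleaner, since it shows directly that $\langle\langle[F,F]\rangle\rangle_{\Fbr}=[\Fbr,\Fbr]$. One small caution on the bookkeeping you flag: (D1) with $k=2$ only applies for $i\ge 3$, so the collapse of $\bar\alpha_{2,3}$ and $\bar\alpha_{1,2}$ into the common $\bar\alpha$ needs (D2) (with $k=i=2$ and $k=i=1$) together with the already-established $\bar\alpha_{i,j}=1$ for $j\ge i+2$; similarly, killing $\bar\beta_2$ needs a pass through (D6) with $k=1$ once $\bar\beta_3=1$ is known. These are exactly the ``overlapping but distinct index ranges'' you anticipate, and they do work out.
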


The first part of our proof is inspired by the proof of Lemma~20 in \cite{bux08}, which says that the normal closure of $[F,F]$ in $\Vbr$ is all of $\Vbr$.

\begin{proof}
 First note that since $F$ has trivial center, without loss of generality $f\in[F,F]$, and since $[F,F]$ is simple, $[F,F]\le N$. Elements of the form $x_i x_j\I$ and $x_i\I x_j$ for $i,j\ge 1$ are in $[F,F]$, and hence in $N$. This tells us that for any $i\ge 1$, the element $\alpha_{i,i+1}x_i x_{i+2}\I \alpha_{i,i+1}\I$ is in $N$. Applying (D4) and (D5), we get
 $$N \ni x_i \alpha_{i,i+2} \alpha_{i,i+1} x_{i+2}\I \alpha_{i,i+1}\I = x_i \alpha_{i,i+2} x_{i+2}\I$$
 and so $\alpha_{i,i+2} x_{i+2}\I x_i \in N$, whence $\alpha_{i,i+2} \in N$. This holds for all $i\ge 1$, and thanks to (D3), we also get that $\alpha_{i,j}\in N$ for all $1\le i<j-1$.
 
 The next goal is to force enough $\beta_{i,j}$ to be in $N$. Running a similar trick as above, we start with $\beta_{i,i+2} x_{i-1} x_{i+2}\I \beta_{i,i+2}\I$ being in $N$, use (D7) and (D9), and get that $N$ contains $\beta_{i+1,i+3} \beta_{i,i+3} \beta_{i,i+2}\I$ for all $i\ge 2$. Using (C) we get that $N$ contains $\beta_{i+1,i+3} \beta_{i,i+2} \alpha_{i,i+2}\I \beta_{i,i+2}\I$, and then since $\alpha_{i,i+2}$ is in $N$, so is $\beta_{i+1,i+3}$. Using (D6) and (D8) then, we see that $N$ contains every $\beta_{i,j}$ for $2\le i<j-1$.
 
 It now suffices to prove that upon modding out $[F,F]$ and all the $\alpha_{i,j}$ for $1\le i<j-1$ and $\beta_{i,j}$ for $2\le i<j-1$, the presentation becomes abelian. Denote elements of this quotient by putting hats on the elements (we have reserved bars for the abelianization). That all the $\widehat{x}_i$ commute follows since we have modded out $[F,F]$. Note that $\widehat{\alpha}_{r,r+2}=\widehat{1}$ for all $r\ge1$, so by (B1) and (B2) we see that all the $\widehat{\alpha}_{i,i+1}$ commute. Also, since $\widehat{\beta}_{r,r+2}=\widehat{1}$ for all $r\ge2$, by (C) we have $\widehat{\beta}_{i,i+1}=\widehat{\alpha}_{i,i+1}$ for $i\ge2$. Next we claim that every $\widehat{x}_{k-1}$ commutes with every $\widehat{\alpha}_{i,i+1}$ (for $k,i\ge1$). If $k\ge i+1$ this follows from (D4) or (D5). If $k\le i$ then (D1) and (D2) say
 $$\widehat{\alpha}_{i,i+1} \widehat{x}_{k-1} = \widehat{x}_{k-1} \widehat{\alpha}_{i+1,i+2} \text{.}$$
 Multiplying on the right by $\widehat{x}_{i+2}\I$, and using (D5) and the fact that $\widehat{x}_{k-1} \widehat{x}_{i+2}\I = \widehat{1}$, this becomes
 $$\widehat{\alpha}_{i,i+1} = \widehat{\alpha}_{i+1,i+2}\text{,}$$
 so the claim is proved.
 
 The last thing to show is that $\widehat{\beta}_{1,j}$ commutes with all the other generators. If $j>2$ then (D8) and (D9), and conjugation by $\widehat{x}_1 \widehat{x}_{j+1}^{-1}=\widehat{1}$, tell us that $\widehat{\beta}_{1,j}=\widehat{\beta}_{1,j+1}$. So we only need to look at $\widehat{\beta}_{1,2}$ and $\widehat{\beta}_{1,3}$. First note that by (C), $\widehat{\beta}_{1,2}=\widehat{\beta}_{1,3}\widehat{\alpha}_{1,2}$, so if $\widehat{\beta}_{1,3}$ commutes with everything (including $\widehat{\alpha}_{1,2}$), then so will $\widehat{\beta}_{1,2}$. Now, $\widehat{\beta}_{1,3}$ commutes with every $\widehat{x}_{k-1}$, by (D7)--(D9), and using that identification of $\widehat{\beta}_{1,3}$ with every $\widehat{\beta}_{1,j}$ for $j\ge3$. We also need an \emph{ad hoc} argument that $\widehat{\beta}_{1,3}$ commutes with $\widehat{x}_2$, which is easily checked (and holds even without the hats). Using (B5) we get that $\widehat{\beta}_{1,3}$ commutes with every $\widehat{\alpha}_{i,i+1}=\widehat{\beta}_{i,i+1}$ for $i\ge2$. Lastly, this fact plus (B7) tells us that $\widehat{\beta}_{1,3}$ commutes with $\widehat{\alpha}_{1,2}$.
\end{proof}

Thanks to this proposition, we see that ``catching'' a non-trivial element of $F$ is a way to blow up a normal subgroup to contain $[\Fbr,\Fbr]$. To prove the Alternative then, the goal is to start with an element of $\Fbr\setminus\Pbr$ and ``catch'' a non-trivial element of $F$ in its normal closure. First we need some technical lemmas.

Let $g=(T,p,T)\in\Pbr$. The tree $T$ defines a partition of $[0,1]$ into dyadic subintervals; let $X(T)$ be the set of endpoints of said subintervals. Each subinterval corresponds to a leaf of $T$, and hence to a strand of $p$. If the endpoint $x\in X(T)$ lies in $(0,1)$ and is such that the two subintervals on either side of $x$ correspond to strands of $p$ that are clones, then call $x$ \emph{inessential}. Otherwise call $x$ \emph{essential}. Let $X_{ess}(T,p)$ be the set of essential endpoints of the subintervals determined by $T$. The next observation justifies denoting this set by $X_{ess}(g)$.

\begin{observation}\label{obs:essential_invariant}
 The set $X_{ess}(g)$ defined above is an invariant of $g$.
\end{observation}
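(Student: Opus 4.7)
The plan is to verify that $X_{ess}$ is preserved under a single expansion. Since $g \in \Pbr$, every representative has the form $(T, p, T)$ with $p$ pure, and any two such representatives are connected by a finite sequence of expansions and reductions, which preserve purity (by the observation after Definition~\ref{def:cloning}). So invariance under one expansion step, together with the symmetric fact about reductions, implies invariance of $X_{ess}(g)$.

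I would fix an expansion $(T, p, T) \leftrightarrow (T \cup \lambda_k, \clone_k^n(p), T \cup \lambda_k)$ at leaf $k$ and analyze the partition $X(T \cup \lambda_k)$, which is obtained from $X(T)$ by inserting one new point $m$ that bisects the $k$th subinterval. The strands of $\clone_k^n(p)$ at positions $k$ and $k+1$ are precisely the two clones produced by cloning the $k$th strand of $p$, so by definition the two subintervals adjacent to $m$ correspond to strands that are clones, and hence $m$ is inessential.

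For any $x \in X(T) \cap (0,1)$, the two subintervals bordering $x$ in the new partition correspond either to the same two strands as in $(T, p)$ (when $x$ is not a boundary of the $k$th subinterval) or to an unchanged strand paired with one of the clones of $s_k$ (when $x$ is such a boundary). In both cases the clone-equivalence class of each strand is preserved: descending from a common strand by further cloning does not change whether two strands are clones. Hence the two strands adjacent to $x$ are clones in $(T \cup \lambda_k, \clone_k^n(p))$ iff they were in $(T, p)$, so $x$ retains its essential/inessential status.

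The main obstacle is pinning down the ``clones'' relation precisely as an invariant of the equivalence class $g$, rather than of a specific representative, since the paper introduces \emph{clones} operationally through the cloning map. The cleanest route is to promote it to the transitive closure: two strands in a representative are clones exactly when they both descend, through iterated cloning, from a single strand of the (unique) reduced representative $(T_{\min}, p_{\min}, T_{\min})$ of $g$. Under this intrinsic formulation, $X_{ess}(g)$ coincides with $X(T_{\min}) \cap (0,1)$, so the invariance of the construction is immediate and my verification above becomes the statement that the definition in terms of an arbitrary $(T,p,T)$ recovers this intrinsic set.
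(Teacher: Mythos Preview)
Your first three paragraphs constitute a correct proof and follow exactly the paper's approach: check invariance under a single expansion by noting that the newly inserted midpoint is inessential since the two adjacent strands are clones. You are in fact more careful than the paper, which does not explicitly verify that the old points of $X(T)$ retain their essential/inessential status; your paragraph~3 fills that in correctly.

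Your fourth paragraph, however, contains a genuine error. The paper's notion of ``clones'' is a property of the braid $p$ alone: strands $j$ and $j+1$ of $p$ are clones precisely when $p$ lies in the image of $\clone_j$ (i.e., the two strands are parallel). This is \emph{not} the same as ``descending from a common strand of the reduced representative,'' and consequently your claimed identity $X_{ess}(g)=X(T_{\min})\cap(0,1)$ is false. For a concrete counterexample, let $T$ be the tree with three leaves in which leaves~$2$ and~$3$ share a caret, and let $p=\clone_1(\Delta_2)\in PB_3$ (the full twist in $PB_2$ with its first strand doubled). Then $(T,p,T)$ is already reduced: the only leaf caret of $T$ is at positions $2,3$, and strands $2,3$ of $p$ are not parallel (their mutual winding number is~$1$). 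Yet strands $1,2$ of $p$ \emph{are} parallel, so the point $1/2$ is inessential and $X_{ess}(g)=\{3/4\}$, whereas $X(T_{\min})\cap(0,1)=\{1/2,3/4\}$. Thus your proposed ``intrinsic'' reformulation does not agree with the paper's definition, and the reduced representative can have parallel adjacent strands that simply fail to share a caret. Fortunately this does not affect the validity of your actual proof in paragraphs~1--3.
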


\begin{proof}
 We need to show that $X_{ess}(T,p)$ is invariant under reduction and expansion. Let $T'=T\cup\lambda_k$. We have $g=(T',\clone_k(p),T')$. On the level of subintervals, all we have done is cut the $k\thh$ subinterval in half, and so $|X(T',\clone_k(p))|=|X(T,p)|+1$. Let $x'$ be the element of $X(T',\clone_k(p))\setminus X(T,p)$, i.e., the midpoint of the $k\thh$ subinterval. Since the strands in $\clone_k(p)$ on either side of $x'$ are clones, we know that $x'$ is inessential. This shows that $X_{ess}(T,p)=X_{ess}(T',\clone_k(p))$, which tells us that the set is invariant under reduction and expansion.
\end{proof}

Note that $X_{ess}(g)=\{0,1\}$ if and only if $g=1$ in $\Pbr$.

\begin{proposition}[Commuting condition]\label{prop:commuting}
 Let $g\in\Pbr$ and $f\in F$. If $f$ fixes $X_{ess}(g)$ then $[g,f]=1$.
\end{proposition}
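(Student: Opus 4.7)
The plan is to represent $f$ and $g$ diagrammatically relative to common tree expansions so that the products $gf$ and $fg$ can be read off by directly stacking diagrams, and then to use the hypothesis on $X_{ess}(g)$ to identify the two outputs.

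Without loss of generality, take $(T,p,T)$ to be the minimal representative of $g$, so that $X_{ess}(g)=X(T)\setminus\{0,1\}$. Now pick a tree $W_1$ refining $T$ that is fine enough for $f$ to be linear on each subinterval of $X(W_1)$; this is possible because $f$ has only finitely many breakpoints, all dyadic. Set $W_2\defeq f(W_1)$, so $f=(W_1,1,W_2)$ in $\Fbr$, and write $W_1=T\cup\Phi_1$; Definition~\ref{def:cloning} then gives $g=(W_1,\clone_{\Phi_1}(p),W_1)$. The hypothesis that $f$ fixes $X_{ess}(g)$ implies that $f$ carries each subinterval of $T$ onto itself, whence $W_2=T\cup\Phi_2$ for a forest $\Phi_2$ with the same number of leaves above each leaf of $T$ as $\Phi_1$, and $g=(W_2,\clone_{\Phi_2}(p),W_2)$ is a second valid expansion. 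Stacking diagrams yields
\[
gf=(W_1,\clone_{\Phi_1}(p),W_1)(W_1,1,W_2)=(W_1,\clone_{\Phi_1}(p),W_2),
\]
\[
fg=(W_1,1,W_2)(W_2,\clone_{\Phi_2}(p),W_2)=(W_1,\clone_{\Phi_2}(p),W_2),
\]
so $[g,f]=1$ becomes equivalent to the braid identity $\clone_{\Phi_1}(p)=\clone_{\Phi_2}(p)$ in $B_N$, where $N$ is the number of leaves of $W_1$.

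The main obstacle is precisely this braid identity, which asserts that $\clone_\Phi(p)$ depends only on the number of leaves of $\Phi$ above each strand of $p$ and not on the internal tree structure of $\Phi$. Geometrically this is natural, since cloning replaces a strand by a bundle of parallel clones and such a bundle behaves as a single thickened strand in the ambient braid; but a careful proof needs an induction on the number of carets, showing that $\Phi_1$ and $\Phi_2$ are connected by a finite sequence of tree rotations above individual strands, each of which amounts to the base identity $\clone_k\circ\clone_k=\clone_{k+1}\circ\clone_k$ on $B_n$---an identity that is directly verifiable from the strand-bifurcation definition of $\clone_k$.
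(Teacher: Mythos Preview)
Your proof has a genuine gap at the very first step. You claim that by taking $(T,p,T)$ to be the minimal representative of $g$ one obtains $X_{ess}(g)=X(T)\setminus\{0,1\}$. This is false in general. Minimality only rules out reducible carets, i.e., pairs of \emph{sibling} leaves whose strands are clones; it does not prevent adjacent strands from being clones when the corresponding leaves are not siblings. For a concrete example, let $T$ have leaf intervals $[0,\tfrac14],\,[\tfrac14,\tfrac12],\,[\tfrac12,1]$ and let $p\in PB_3$ be obtained by cloning the second strand of $\Delta_2\in PB_2$. Then strands $2$ and $3$ are clones but leaves $2$ and $3$ are not siblings, so $(T,p,T)$ is already reduced, yet $\tfrac12\in X(T)$ is inessential and $X_{ess}(g)=\{\tfrac14\}$. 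Note that $\{0,\tfrac14,1\}$ is not the leaf set of any binary tree, so no choice of representative can achieve $X(T)\setminus\{0,1\}=X_{ess}(g)$ here.

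This breaks the rest of your argument: if $f$ fixes only $X_{ess}(g)$ and not all of $X(T)$, then $f$ need not carry each $T$-subinterval to itself, $W_2=f(W_1)$ need not refine $T$ at all, and the leaf counts of $\Phi_1$ and $\Phi_2$ above individual leaves of $T$ need not match. The paper's proof circumvents exactly this by grouping consecutive clone strands into blocks $J_s$ bounded by the essential points; $f$ does preserve each $J_s$, so the forests $\Phi$ and $\Phi'$ have the same total number of leaves over each \emph{block}, and since all strands within a block are mutual clones this suffices to force $\clone_\Phi(p)=\clone_{\Phi'}(p)$. Your final paragraph contains the right ingredient, but it must be applied block by block rather than strand by strand.
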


\begin{proof}
 Choose a tree $T$, say with $n$ leaves, such that $g=(T,p,T)$ for some $p\in PB_n$. The tree gives us a subdivision of $[0,1]$, say with endpoints $0=x_0<x_1<\cdots<x_{n-1}<x_n=1$. For $1\le i\le n$ let $I_i\defeq [x_{i-1},x_i]$. Let $0=x_{i_0}<x_{i_1}<\cdots<x_{i_{r-1}}<x_{i_r}=1$ be precisely the essential endpoints, so $|X_{ess}(g)|=r-1$. For $1\le s\le r$ define
 $$J_s\defeq \bigcup\limits_{i_{s-1}\le i\le i_s}I_i \text{,}$$
 so the $J_s$ are the closures of the connected components of $[0,1]\setminus X_{ess}(g)$. The $J_s$ partition the set of intervals $I_i$, and hence partition the leaves of $T$. For a given $J_s$, the strands of $p$ indexed by the subintervals contained in $J_s$ are all clones of each other.
 
 Now, the fact that $f$ fixes $X_{ess}(g)$ means that it can be represented by a tree pair of the form $(T \cup \Phi,T \cup \Phi')$, where $\Phi$ is a forest whose roots are identified with the leaves of $T$, as is $\Phi'$, such that a certain important property holds. To state the property we need some setup. Write $\Phi$ as $\Phi_1 \cup \cdots \cup \Phi_r$, where $\Phi_s$ is the subforest whose roots are precisely those roots of $\Phi$ identified with the leaves of $T$ lying in $J_s$. One might call $\Phi_s$ the subforest of $\Phi$ with ``support'' in $J_s$. Similarly define $\Phi'_s$ for $1\le s \le r$. Now, the important property of $\Phi$ and $\Phi'$, which we get since $f$ fixes $X_{ess}(g)$, is that for each $1\le s\le r$, the leaves of $\Phi_s$ are in bijection with the leaves of $\Phi'_s$. This bijection preserves the order on the leaves, and is induced by $f$. The ``paired forest diagram'' $(\Phi_s,\Phi'_s)$ describes how $f$ acts on the interval $J_s$.
 
 A consequence of all the above is that $\clone_{\Phi}(p)=\clone_{\Phi'}(p)$ (see Definition~\ref{def:cloning}). Indeed, for any $J_s$ the strands of $p$ indexed in $J_s$ are clones of each other, and applying $\clone_\Phi$ further clones this block of strands into a number of strands equal to the number of leaves of $\Phi_s$. This is true of $\clone_{\Phi'}$ as well, since $\Phi_s$ and $\Phi'_s$ have the same number of leaves. Then since this holds for all $s$, we conclude that $\clone_{\Phi}(p)=\clone_{\Phi'}(p)$.
 
 The following calculation finishes the proof.
 
 \begin{align*}
  fgf^{-1} &= (T \cup \Phi,T \cup \Phi')(T,p,T)(T \cup \Phi',T \cup \Phi) \\
  &= (T \cup \Phi,T \cup \Phi')(T \cup \Phi',\clone_{\Phi'}(p),T \cup \Phi')(T \cup \Phi',T \cup \Phi) \\
  &= (T \cup \Phi,\clone_{\Phi'}(p),T \cup \Phi) \\
  &= (T \cup \Phi,\clone_{\Phi}(p),T \cup \Phi) \\
  &= (T,p,T) = g \text{.}
 \end{align*}
 
\end{proof}

Figure~\ref{fig:commuting} gives an indication of what is really happening in Proposition~\ref{prop:commuting}.

\begin{figure}[htb]
 \centering
  \begin{tikzpicture}[line width = 1pt]
  
   \filldraw[color=lightgray] (0,0) -- (4,0) -- (4,2) -- (0,2) -- (0,0); 
   
   \filldraw[color=lightgray] (0,6) -- (4,6) -- (4,8) -- (0,8) -- (0,6); 
   
   \filldraw[color=lightgray] (0,2) to [out=90, in=-90] (2,4) -- (4,4) to [out=-90, in=90] (2,2); 
   
   \draw (0,2) to [out=90, in=-90] (2,4)   (2,2) to [out=90, in=-90] (4,4); 
   
   \filldraw[color=lightgray] (0,4) to [out=90, in=-90] (2,6) -- (4,6) to [out=-90, in=90] (2,4); 
   
   \filldraw[color=lightgray] (2,2) to [out=90, in=-90] (0,4) -- (2,4) to [out=-90, in=90] (4,2) -- (2,2); 
   
   \draw (0,4) to [out=90, in=-90] (2,6)   (2,4) to [out=90, in=-90] (4,6)
         (1,4) to [out=90, in=-90] (3,6)   (1.5,4) to [out=90, in=-90] (3.5,6); 
   
   \filldraw[color=lightgray] (2,4) to [out=90, in=-90] (0,6) -- (2,6) to [out=-90, in=90] (4,4) -- (2,4); 
   
   \draw (0,2) -- (0,0)   (4,0) -- (4,2)   (2,0) -- (2,2)   (2.5,0) -- (3,2)   (3,0) -- (3.5,2); \draw[very thin] (0,2) -- (4,2); 
   
   \draw (2,2) to [out=90, in=-90] (0,4)   (2,4) to [out=-90, in=90] (4,2)
         (3,2) to [out=90, in=-90] (1,4)   (1.5,4) to [out=-90, in=90] (3.5,2); 
   
   \draw (2,4) to [out=90, in=-90] (0,6)   (2,6) to [out=-90, in=90] (4,4); 
   
   \draw (0,6) -- (0,8) -- (4,8) -- (4,6)   (2,6) -- (2,8)   (3,6) -- (2.5,8)   (3.5,6) -- (3,8); \draw[very thin] (4,6) -- (0,6); 
   
   \draw (4,3.9) -- (4,4.1);
   
   \begin{scope}[yshift=2cm,yscale=-1]
    \filldraw[color=lightgray] (0,2) to [out=90, in=-90] (2,4) -- (4,4) to [out=-90, in=90] (2,2); 
   
    \draw (0,2) to [out=90, in=-90] (2,4)   (2,2) to [out=90, in=-90] (4,4); 
   
    \filldraw[color=lightgray] (0,4) to [out=90, in=-90] (2,6) -- (4,6) to [out=-90, in=90] (2,4); 
   
    \filldraw[color=lightgray] (2,2) to [out=90, in=-90] (0,4) -- (2,4) to [out=-90, in=90] (4,2) -- (2,2); 
   
    \draw (0,4) to [out=90, in=-90] (2,6)   (2,4) to [out=90, in=-90] (4,6)
         (0.5,4) to [out=90, in=-90] (2.5,6)   (1,4) to [out=90, in=-90] (3,6); 
   
    \filldraw[color=lightgray] (2,4) to [out=90, in=-90] (0,6) -- (2,6) to [out=-90, in=90] (4,4) -- (2,4); 
   
    \draw (2,2) to [out=90, in=-90] (0,4)   (2,4) to [out=-90, in=90] (4,2)
          (2.5,2) to [out=90, in=-90] (0.5,4)   (1,4) to [out=-90, in=90] (3,2); 
   
    \draw (2,4) to [out=90, in=-90] (0,6)   (2,6) to [out=-90, in=90] (4,4); 
   
    \draw (4,3.9) -- (4,4.1)   (0,6) -- (4,6); \draw[very thin] (0,2) -- (4,2);
   \end{scope}

   \begin{scope}[xshift=6cm]
    \filldraw[color=lightgray] (0,6) -- (2,6) -- (2,2) -- (0,2);
    
    \draw (0,6) -- (0,2)   (2,6) -- (2,2);
   
    \filldraw[color=lightgray] (0,0) -- (4,0) -- (4,2) -- (0,2) -- (0,0); 
   
    \filldraw[color=lightgray] (0,6) -- (4,6) -- (4,8) -- (0,8) -- (0,6); 
   
    \filldraw[color=lightgray] (2,2) to [out=90, in=-90] (3,4) -- (4,4) to [out=-90, in=90] (3,2); 
   
    \draw (2,2) to [out=90, in=-90] (3,4)   (3,2) to [out=90, in=-90] (4,4); 
   
    \filldraw[color=lightgray] (2,4) to [out=90, in=-90] (3,6) -- (4,6) to [out=-90, in=90] (3,4); 
   
    \filldraw[color=lightgray] (3,2) to [out=90, in=-90] (2,4) -- (3,4) to [out=-90, in=90] (4,2) -- (3,2); 
   
    \draw (2,4) to [out=90, in=-90] (3,6)   (3,4) to [out=90, in=-90] (4,6)
          (2.5,4) to [out=90, in=-90] (3.5,6); 
   
    \filldraw[color=lightgray] (3,4) to [out=90, in=-90] (2,6) -- (3,6) to [out=-90, in=90] (4,4) -- (3,4); 
   
    \draw (0,2) -- (0,0)   (4,0) -- (4,2)   (2,0) -- (2,2)   (2.5,0) -- (3,2)   (3,0) -- (3.5,2); \draw[very thin] (0,2) -- (4,2); 
   
    \draw (3,2) to [out=90, in=-90] (2,4)   (3,4) to [out=-90, in=90] (4,2)
          (2.5,4) to [out=-90, in=90] (3.5,2); 
   
    \draw (3,4) to [out=90, in=-90] (2,6)   (3,6) to [out=-90, in=90] (4,4); 
   
    \draw (0,6) -- (0,8) -- (4,8) -- (4,6)   (2,6) -- (2,8)   (3,6) -- (2.5,8)   (3.5,6) -- (3,8); \draw[very thin] (4,6) -- (0,6); 
   
    \draw (4,3.9) -- (4,4.1);
   \end{scope}

   \begin{scope}[xshift=6cm,yshift=2cm, yscale=-1]
    \filldraw[color=lightgray] (0,6) -- (2,6) -- (2,2) -- (0,2);
    
    \draw (0,6) -- (0,2)   (2,6) -- (2,2);
   
    \filldraw[color=lightgray] (2,2) to [out=90, in=-90] (3,4) -- (4,4) to [out=-90, in=90] (3,2); 
   
    \draw (2,2) to [out=90, in=-90] (3,4)   (3,2) to [out=90, in=-90] (4,4)
          (2.5,2) to [out=90, in=-90] (3.5,4); 
   
    \filldraw[color=lightgray] (2,4) to [out=90, in=-90] (3,6) -- (4,6) to [out=-90, in=90] (3,4); 
   
    \filldraw[color=lightgray] (3,2) to [out=90, in=-90] (2,4) -- (3,4) to [out=-90, in=90] (4,2) -- (3,2); 
   
    \draw (2,4) to [out=90, in=-90] (3,6)   (3,4) to [out=90, in=-90] (4,6); 
   
    \filldraw[color=lightgray] (3,4) to [out=90, in=-90] (2,6) -- (3,6) to [out=-90, in=90] (4,4) -- (3,4); 
   
    \draw (3,2) to [out=90, in=-90] (2,4)   (3,4) to [out=-90, in=90] (4,2); 
   
    \draw (3,4) to [out=90, in=-90] (2,6)   (3,6) to [out=-90, in=90] (4,4)
          (2.5,6) to [out=-90, in=90] (3.5,4); 
   
    \draw (4,3.9) -- (4,4.1)   (0,6) -- (4,6); \draw[very thin] (0,2) -- (4,2);
   \end{scope}

  \end{tikzpicture}
 \caption{The picture on the left is the commutator $[x_1,\beta_{1,2}]$. Since $X_{ess}(\beta_{1,2})=\{0,\frac{1}{2},1\}$, and $x_1$ fixes this set, this commutator is trivial, and one can check that the resulting ``ribbon diagram'' indeed represents the trivial element. The picture on the right demonstrates that $[x_1,\beta_{2,3}]$ is not trivial, as the ribbon diagram does not represent the trivial element, and indeed $X_{ess}(\beta_{2,3})=\{0,\frac{1}{2},\frac{3}{4},1\}$, which is not fixed by $x_1$.}
 \label{fig:commuting}
\end{figure}
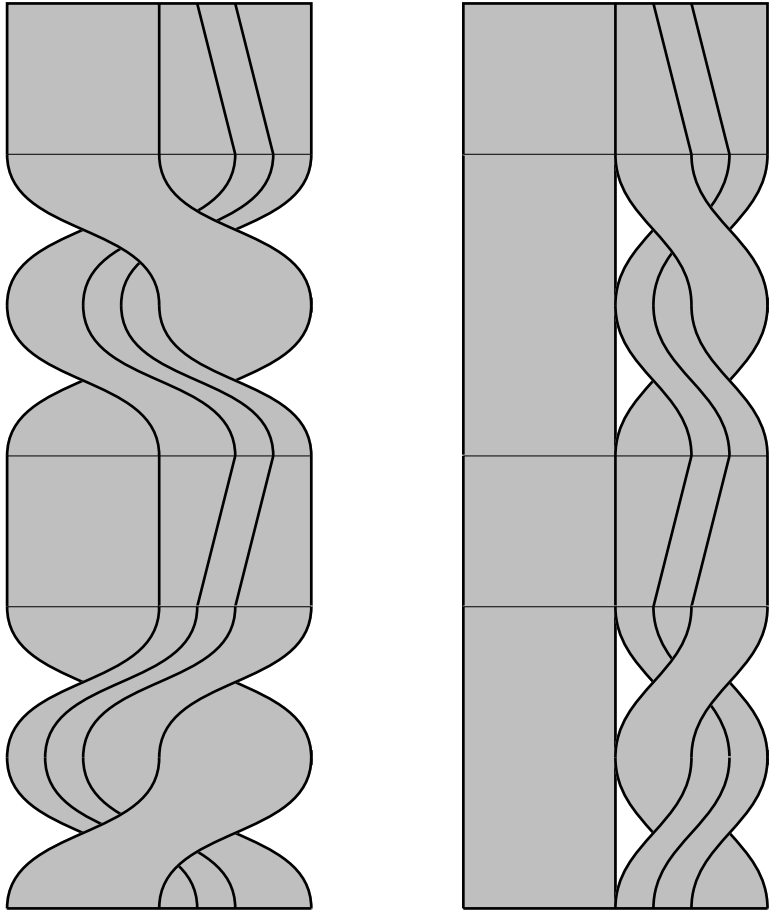

\begin{lemma}\label{lem:conjugator}
 Let $g\in\Pbr$ and $1\neq f\in F$. Then there exists $h\in F$ such that $[h,g]=1$ but $[h,f]\neq 1$.
\end{lemma}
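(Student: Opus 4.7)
The plan is to apply Proposition~\ref{prop:commuting} to manufacture a rich supply of elements of $F$ that commute with $g$, and then show, by examining the action on $[0,1]$, that $f$ cannot commute with all of them. Set $X \defeq X_{ess}(g)$ (a finite set of dyadic points in $(0,1)$) and let $F_X \le F$ denote the pointwise stabilizer of $X$. By Proposition~\ref{prop:commuting}, every $h \in F_X$ satisfies $[h,g]=1$, so the task reduces to producing some $h \in F_X$ with $[h,f] \ne 1$.

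Enumerate $X \cup \{0,1\}$ as $0 = x_0 < x_1 < \cdots < x_r = 1$ and assume, for contradiction, that $f$ commutes with every element of $F_X$. For each standard dyadic interval $K = [a, a+2^{-n}] \subseteq (x_{i-1}, x_i)$, rescaling the generator $x_0 \in F$ onto $K$ and extending by the identity on $[0,1]\setminus K$ produces an element $h_K \in F$ whose support, viewed as a homeomorphism of $[0,1]$, is exactly $K$; since $h_K$ fixes $X$ pointwise, $h_K \in F_X$. The hypothesis $f h_K f\I = h_K$ then forces
\[ f(K) \;=\; f(\mathrm{supp}(h_K)) \;=\; \mathrm{supp}(f h_K f\I) \;=\; \mathrm{supp}(h_K) \;=\; K, \]
and since $f$ is an orientation-preserving homeomorphism of $[0,1]$, it must fix both endpoints of $K$. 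Letting $K$ range over all such intervals inside every open gap $(x_{i-1}, x_i)$ shows that the fixed-point set of $f$ contains a subset dense in $[0,1]$; by continuity $f = \mathrm{id}$, contradicting $f \ne 1$.

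The only technical point requiring a little care is the verification that the rescaled-and-extended $x_0$ genuinely lies in $F$. This works precisely because $|K| = 2^{-n}$ is a power of $2$: the rescaling carries dyadic breakpoints to dyadic points and preserves the powers-of-$2$ slopes, so the construction yields a bona fide element of $F$ with support equal to $K$. Once that is granted, the argument is a clean density argument on $[0,1]$, combined with a routine translation between the homeomorphism-theoretic notion of support and the fixing-of-$X_{ess}(g)$ condition provided by Proposition~\ref{prop:commuting}.
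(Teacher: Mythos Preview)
Your proof is correct. Both your argument and the paper's reduce via Proposition~\ref{prop:commuting} to showing that $f$ cannot centralize the pointwise stabilizer $F_X$ of $X=X_{ess}(g)$, but the implementations differ. The paper splits into two cases: if $f$ does not fix $X$, it exhibits a single $h\in F$ whose fixed-point set in $(0,1)$ is exactly $X$, so that commuting with $h$ would force $f$ to stabilize $X$; if $f$ does fix $X$, it restricts $f$ to a gap $[x_{i-1},x_i]$ on which $f$ is nontrivial and uses that the center of (the copy of) $F$ acting there is trivial. Your approach avoids this dichotomy entirely by a uniform density argument: commuting with each ``bump'' $h_K$ forces $f$ to preserve, hence fix the endpoints of, every standard dyadic subinterval $K$ of a gap, and these endpoints are dense. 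This is slightly more elementary---you only need elements of $F$ with arbitrarily small dyadic support, not an element with a prescribed fixed-point set---at the cost of invoking infinitely many witnesses rather than one or two. One cosmetic point: the support of your $h_K$ is the open interval $(a,a+2^{-n})$ rather than the closed $K$, but this changes nothing, since $f$ preserving the open support still forces it to fix both endpoints.
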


\begin{proof}
 Order the elements of $X_{ess}(g)$ by $0=x_0<x_1<\cdots x_{r-1}<x_r=1$. By Proposition~\ref{prop:commuting}, it suffices to find $h\in F$ that fixes $X_{ess}(g)$ but does not commute with $f$.
 
 First suppose $f$ fixes $X_{ess}(g)$. Let $1\le i\le r$. Suppose that every $h$ with support in $[x_{i-1},x_i]$ commutes with $f$. Then $f|_{[x_{i-1},x_i]}$ must be trivial. Since $f\neq 1$, this cannot happen for every $i$, so we conclude that there exists $h$ with support in $[x_{i-1},x_i]$ for some $i$ such that $[h,f]\neq 1$, and since the support of $h$ is disjoint from $X_{ess}(g)$, also $[h,g]=1$.
 
 Now suppose $f$ does not fix $X_{ess}(g)$. There exists an element $h\in F$ whose fixed point set is precisely $X_{ess}(g)$, so in particular $[h,g]=1$. If $f$ were to commute with $h$ then it would necessarily stabilize its fixed point set, so instead we conclude that $[h,f]\neq 1$.
\end{proof}

\begin{proof}[Proof of Theorem~\ref{thrm:alt}]
 Let $t\in\Fbr\setminus\Pbr$. Write $t=gf$ for $g\in\Pbr$ and $1\neq f\in F$. By Lemma~\ref{lem:conjugator} we can choose $h\in F$ such that $h$ commutes with $g$ but not $f$. In particular, the normal closure of $t$ contains $(f\I g\I)(g hfh\I) = f\I hfh\I$. This is a non-trivial element of $F$ (even of $[F,F]$), so by Proposition~\ref{prop:normal_closure_F}, the normal closure of $t$ contains $[\Fbr,\Fbr]$.
\end{proof}

\begin{remark}\label{rmk:subgroup_remarks}
 Since $\Pbr$ contains every pure braid group, there is no hope of classifying all subgroups of $\Fbr$ in any real sense. At least we do know that every finitely generated subgroup of $\Pbr$ must lie in some $PB_n$, since $\Pbr$ is a direct limit of copies of the $PB_n$. Another interesting fact is that, since $\Pbr$ contains every pure braid group, it also contains every right-angled Artin group, by a result of Kim and Koberda \cite{kim15}. In particular, $\Fbr$ and $\Vbr$ are examples of finitely presented (even type $\F_\infty$) groups that contain every right-angled Artin group.
\end{remark}

Now that we have an Alternative for $\Fbr$, we can derive one for $\Vbr$. The two options for a normal subgroup turn out to be that it is either contained in $\Pbr$, or else equals all of $\Vbr$. To prove this we will quote a result from \cite{bux08} that says that the normal closure of $[F,F]$ in $\Vbr$ is all of $\Vbr$. The proof is similar to the first part of the proof of our Proposition~\ref{prop:normal_closure_F}.

\begin{corollary}[Alternative for $\Vbr$]\label{cor:BV_alt}
 Let $N$ be a proper normal subgroup of $\Vbr$. Then $N$ is contained in $\Pbr$.
\end{corollary}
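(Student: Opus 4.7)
The plan is to prove the contrapositive: if $N\triangleleft \Vbr$ is not contained in $\Pbr$, then $N=\Vbr$. The two main tools are already at hand, namely the Alternative for $\Fbr$ (Theorem~\ref{thrm:alt}) and the cited result from \cite{bux08} stating that the normal closure of $[F,F]$ in $\Vbr$ equals $\Vbr$. The bridge between them will be the splitting $\Fbr=\Pbr\rtimes F$.

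First I would exploit the simplicity of $V$. Write $\pi\colon \Vbr\onto V$ for the quotient with kernel $\Pbr$. If $N\not\le \Pbr$ then $\pi(N)$ is a nontrivial normal subgroup of $V$, hence $\pi(N)=V$. Next I would look at $N\cap \Fbr$, which is automatically normal in $\Fbr$. To apply Theorem~\ref{thrm:alt} meaningfully, I need $N\cap \Fbr\not\le \Pbr$. Here I would use the section $F\into \Fbr$ from $\Fbr=\Pbr\rtimes F$: given any $1\neq f\in F$, viewed as a (non-pure) element of $\Fbr\le \Vbr$, the equality $\pi(N)=V$ supplies some $n\in N$ with $\pi(n)=\pi(f)$. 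Then $nf^{-1}\in\ker\pi=\Pbr\le\Fbr$, so $n=(nf^{-1})f\in \Fbr$. Thus $n\in N\cap \Fbr$ and $\pi(n)=\pi(f)\neq 1$, so $n\notin \Pbr$.

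Having established $N\cap \Fbr\not\le \Pbr$, Theorem~\ref{thrm:alt} forces $[\Fbr,\Fbr]\le N\cap \Fbr\le N$. In particular $[F,F]\le N$. Since $N$ is normal in $\Vbr$, it contains the normal closure of $[F,F]$ in $\Vbr$, which by the result quoted from \cite{bux08} is all of $\Vbr$. Therefore $N=\Vbr$, finishing the proof of the contrapositive.

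The only genuinely nontrivial step is the one in the middle: producing an element of $N\cap \Fbr$ outside $\Pbr$. The idea is not deep, but it depends on the fact that the short exact sequence $1\to\Pbr\to \Fbr\to F\to 1$ splits (so that preimages under $\pi$ of elements of $F\le V$ can be chosen already inside $\Fbr$); without this splitting one could only conclude that $N\cap \Fbr$ surjects onto $F$ modulo $\Pbr$ indirectly. Once that step is in place, the conclusion is just a concatenation of the two Alternative-type results.
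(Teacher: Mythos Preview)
Your proof is correct and follows the same strategy as the paper's: reduce to Theorem~\ref{thrm:alt} and then invoke \cite[Lemma~20]{bux08}. The paper's own proof is terser and applies Theorem~\ref{thrm:alt} directly without spelling out why $N\cap\Fbr\not\le\Pbr$; your explicit argument via the simplicity of $V$ and the splitting $\Fbr=\Pbr\rtimes F$ fills in precisely the step the paper leaves implicit.
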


\begin{proof}
 Suppose $N$ is not contained in $\Pbr$, so by Theorem~\ref{thrm:alt} $N$ contains $[\Fbr,\Fbr]$. In particular $N$ contains $[F,F]$, and so by \cite[Lemma~20]{bux08}, $N=\Vbr$.
\end{proof}

The fact that $V$ is simple tells us that either $N\le\Pbr$ or $N\Pbr=\Vbr$, so the content of the corollary is that in the latter case in fact $N=\Vbr$.

\begin{corollary}
 $\Pbr$ is characteristic in $\Vbr$ and $\Fbr$.
\end{corollary}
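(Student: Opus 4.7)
The plan is to reduce both statements to the Alternative results already established, using the fact that an automorphism $\varphi$ of a group $G$ preserves the class of normal subgroups of $G$ and induces isomorphisms $G/N\cong G/\varphi(N)$.

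I would handle $\Vbr$ first, since it is the easier case. The subgroup $\Pbr$ is a proper normal subgroup of $\Vbr$ (because $\Vbr/\Pbr\cong V\neq 1$), so for any $\varphi\in\operatorname{Aut}(\Vbr)$ the image $\varphi(\Pbr)$ is again a proper normal subgroup of $\Vbr$. Corollary~\ref{cor:BV_alt} then forces $\varphi(\Pbr)\le\Pbr$. Applying the same reasoning to $\varphi^{-1}$ yields the reverse inclusion, so $\varphi(\Pbr)=\Pbr$.

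For $\Fbr$, let $\varphi\in\operatorname{Aut}(\Fbr)$. The normal subgroup $\varphi(\Pbr)$ falls into one of the two cases of Theorem~\ref{thrm:alt}: either $\varphi(\Pbr)\le\Pbr$, or $[\Fbr,\Fbr]\le\varphi(\Pbr)$. I would rule out the second case as follows. If $[\Fbr,\Fbr]\le\varphi(\Pbr)$, then $\Fbr/\varphi(\Pbr)$ is a quotient of the abelianization of $\Fbr$ (which is $\Z^4$ by Lemma~\ref{lem:Hom(BF,R)}), hence abelian. On the other hand, $\varphi$ descends to an isomorphism $\Fbr/\Pbr\xrightarrow{\cong}\Fbr/\varphi(\Pbr)$, and $\Fbr/\Pbr\cong F$ is non-abelian. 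This contradiction leaves only $\varphi(\Pbr)\le\Pbr$, and applying the argument to $\varphi^{-1}$ gives equality.

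There is no real obstacle: everything is packaged into Corollary~\ref{cor:BV_alt} and Theorem~\ref{thrm:alt}. The only care required is the observation that automorphic images of $\Pbr$ still have $F$ as quotient, which incompatibly with being contained in $[\Fbr,\Fbr]$-abelianness is what kills the second horn of the Alternative.
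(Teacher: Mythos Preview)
Your proof is correct and follows essentially the same route as the paper. The paper's proof is terser: for $\Vbr$ it just says ``immediate from Corollary~\ref{cor:BV_alt}'', and for $\Fbr$ it rules out the second horn of the Alternative by noting that $[\Fbr,\Fbr]$ is characteristic (so $[\Fbr,\Fbr]\le\varphi(\Pbr)$ would give $[\Fbr,\Fbr]=\varphi^{-1}([\Fbr,\Fbr])\le\Pbr$, contradicting non-abelianness of $F$), which is the same contradiction you reach via the induced isomorphism $\Fbr/\Pbr\cong\Fbr/\varphi(\Pbr)$.
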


\begin{proof}
 The statement for $\Vbr$ is immediate from Corollary~\ref{cor:BV_alt}. The statement for $\Fbr$ follows from Theorem~\ref{thrm:alt} once we observe that $[\Fbr,\Fbr]$ is characteristic.
\end{proof}

\section{Normal subgroups over $[\Fbr,\Fbr]$}\label{sec:abelain_quotients}

Theorem~\ref{thrm:alt} tells us that normal subgroups of $\Fbr$ either contain $[\Fbr,\Fbr]$, or else live in $\Pbr$. The latter situation is rather complicated, and we will discuss some examples in Section~\ref{sec:non_abelian_quotients}, along with some general results. Normal subgroups over $[\Fbr,\Fbr]$ are more tractable though. In this section we compute the Bieri--Neumann--Strebel invariant $\Sigma^1(\Fbr)$, which sheds some light on such subgroups, for instance by characterizing which of them are finitely generated.

\subsection{The BNS invariant}\label{sec:BNS}

The Bieri--Neumann--Strebel (BNS) invariant of a finitely generated group $G$, introduced in \cite{bieri87}, is a geometric invariant $\Sigma^1(G)$ that, among other things, provides a means of understanding normal subgroups of $G$ containing the commutator subgroup $[G,G]$. For instance $\Sigma^1(G)$ tells us when such a normal subgroup is finitely generated or not.

Historically, $\Sigma^1(G)$ has proved to be difficult to compute in general. Some groups for which $\Sigma^1$ is interesting and has been successfully computed include right-angled Artin groups \cite{meier95}, pure braid groups \cite{koban15}, pure loop braid groups \cite{orlandi-korner00} and Thompson's group $F$ \cite{bieri87,bieri10}.

The BNS-invariant $\Sigma^1(G)$ of a finitely generated group $G$ is defined as follows. Consider characters $\chi\colon G\to\R$ of $G$. Two characters $\chi$ and $\chi'$ are \emph{equivalent} if there exists $c\in\R^{>0}$ such that $\chi(g)=c\chi'(g)$ for all $g\in G$. The equivalence classes of non-trivial characters form the \emph{character sphere} $\Sigma(G)$ of $G$. It is a $d$-sphere if the torsion-free rank of $G/[G,G]$ is $d+1$. Now pick a finite generating set $S$ for $G$ and let $\Gamma(G,S)$ be the Cayley graph. For a character $\chi\colon G\to \R$ let $\Gamma(G,S)^{\chi\ge0}$ be the full subgraph of $\Gamma(G,S)$ spanned by those vertices $g\in G$ with $\chi(g)\ge0$. The BNS-invariant $\Sigma^1(G)$ is a subset of $\Sigma(G)$, which does not depend on $S$, defined by:
$$\Sigma^1(G)\defeq\{[\chi]\in \Sigma(G)\mid \Gamma(G,S)^{\chi\ge0} \text{ is connected}\}\text{.}$$

The following is one of the main applications of $\Sigma^1(G)$.

\begin{cit}\cite[Theorem~B1]{bieri87}\label{cit:fin_props}
 Let $G$ be a finitely generated group and let $N\triangleleft G$ with $G/N$ abelian. Then $N$ is finitely generated if and only if for every $[\chi]\in \Sigma(G)$ such that $\chi(N)=0$, we have $[\chi]\in\Sigma^1(G)$.
\end{cit}

Another important fact is that $\Sigma^1(G)$ is invariant under automorphisms of $G$. As is standard, we denote the complement $\Sigma(G)\setminus \Sigma^1(G)$ by $\Sigma^1(G)^c$.

\subsection{Tools}\label{sec:tools}

In this subsection we establish some terminology and notation, and cite some useful results that we will use to calculate $\Sigma^1(\Fbr)$ in the following section. First we collect some definitions. Let $G$ be a group, and let $I,J\subseteq G$. We say that $J$ \emph{dominates} $I$ if every element of $I$ commutes with some element of $J$. The \emph{commuting graph} $C(J)$ of $J$ is the graph with vertex set $J$ and an (unoriented) edge between $a$ and $b$ if $a$ and $b$ commute. We say that $g\in G$ \emph{survives} under a character $\chi$ if $\chi(g)\ne0$. Otherwise we say it \emph{dies}, or that $\chi$ \emph{kills} it. If $g$ survives under $\chi$ we will also sometimes call it \emph{$\chi$-hyperbolic}.

The point of all this terminology is a useful criterion to determine if a character is in $\Sigma^1(G)$:

\begin{cit}[Survivors dominating generators]\label{cit:conn_dom}\cite[Lemma~1.9]{koban15}
 Let $G$ be a group and $\chi$ a character of $G$. Suppose there are sets $I,J\subseteq G$ such that $I$ generates $G$, every element of $J$ survives under $\chi$, $J$ dominates $I$, and $C(J)$ is connected. Then $[\chi]\in\Sigma^1(G)$.
\end{cit}

We will also make use of the following standard result, cf.~\cite[Lemma~1.3]{koban15}:

\begin{cit}[Quotients]\label{cit:quotients}
 Let $\pi\colon G\onto H$ be an epimorphism of groups. Let $\chi$ be a character of $H$ and let $\phi \defeq \chi\circ\pi$ be the corresponding character of $G$. If $[\phi]\in\Sigma^1(G)$ then $[\chi]\in\Sigma^1(H)$.
\end{cit}

\subsection{The BNS invariant $\Sigma^1(\Fbr)$}\label{sec:BNS_BF}

The answer is:

\begin{theorem}\label{thrm:BNS_BF}
 The Bieri--Neumann--Strebel invariant $\Sigma^1(\Fbr)$ for $\Fbr$ consists of all points on the sphere $\Sigma(\Fbr)=S^3$ except for the points $[\phi_0]$ and $[\phi_1]$, where $\phi_0$ and $\phi_1$ are as defined in Section~\ref{sec:chars}.
\end{theorem}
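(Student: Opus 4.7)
My plan is to show the two inclusions $\{[\phi_0],[\phi_1]\}\subseteq\Sigma^1(\Fbr)^c$ and $S(\Fbr)\setminus\{[\phi_0],[\phi_1]\}\subseteq\Sigma^1(\Fbr)$ separately. The first is immediate from Lemma~\ref{lem:quotients}: each $\phi_i$ factors as $\chi_i\circ\pi$ through $\pi\colon\Fbr\onto F$, and the classical Bieri--Neumann--Strebel calculation \cite{bieri87,bieri10} gives $\Sigma^1(F)^c=\{[\chi_0],[\chi_1]\}$; the contrapositive of Lemma~\ref{lem:quotients} then yields the exclusion.

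\textbf{Characters with braiding.} For the reverse inclusion, write $\chi=a\phi_0+b\phi_1+c\omega_0+d\omega_1$ and tabulate its values on the standard generators using Lemma~\ref{lem:Hom(BF,R)}: $\chi(x_0)=-(a+b)$, $\chi(x_k)=-b$ for $k\ge1$, $\chi(\alpha_{k,k+1})=d$, $\chi(\alpha_{i,j})=0$ for $j>i+1$, $\chi(\beta_{1,2})=c+d$, $\chi(\beta_{1,j})=c$ for $j\ge 3$, $\chi(\beta_{k,k+1})=d$ for $k\ge 2$, and $\chi(\beta_{i,j})=0$ otherwise. When $d\ne 0$, I would apply Lemma~\ref{lem:dom_lem} with $J=\{\alpha_{k,k+1}:k\ge 1\}$: all elements are $\chi$-hyperbolic, and by the pure-braid relation (B1), $\alpha_{k,k+1}$ and $\alpha_{k',k'+1}$ commute whenever $|k-k'|\ge 2$, so $C(J)$ is connected; relations (D5), (D9), (B1), (B5) then show that every standard generator of $\Fbr$ commutes with some $\alpha_{k,k+1}$ of sufficiently large index. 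The parallel case $c\ne 0$, $d=0$ is handled with $J=\{\beta_{1,j}:j\ge 3\}$ via (B5), (B7), (D8), (D9).

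\textbf{Characters killing $\Pbr$, and the main obstacle.} It remains to treat $c=d=0$, so $\chi=a\phi_0+b\phi_1$ factors through $F$ and the hypothesis $[\chi]\ne[\phi_0],[\phi_1]$ says $(a,b)$ is not a positive multiple of $(1,0)$ or $(0,1)$. For $[-\phi_0]$, Lemma~\ref{lem:hnn} writes $\Fbr$ as a strictly ascending HNN-extension of $\Fbr(1)$ with stable letter $x_0$; since $-\phi_0$ kills $\Fbr(1)$ and evaluates as $+1$ on $x_0$, the standard BNS theorem for ascending HNN-extensions places $[-\phi_0]\in\Sigma^1(\Fbr)$. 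The symmetric class $[-\phi_1]$ follows from the natural left--right flip automorphism of $\Fbr$, which swaps $\phi_0\leftrightarrow\phi_1$ and preserves $\Sigma^1$. For the remaining mixed characters $a\phi_0+b\phi_1$ with $a,b\ne 0$, I would again apply Lemma~\ref{lem:dom_lem}, with $J$ taken to be a set of $F$-survivors witnessing $[\chi|_F]\in\Sigma^1(F)$, augmented by additional $F$-elements chosen via Proposition~\ref{prop:commuting} to fix the essential endpoint sets $X_{ess}(\alpha_{i,j})$ and $X_{ess}(\beta_{i,j})$ and so dominate the pure-braid generators. The principal technical obstacle of the whole proof is precisely this last step: producing a connected commuting family of $\chi$-hyperbolic elements of $F$ whose members collectively fix all the relevant essential endpoint sets, since the centralizer of each $x_i$ in $F$ is cyclic and $\chi$ detects only endpoint slopes.
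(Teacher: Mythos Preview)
Your overall architecture matches the paper's, and your treatment of the points $[\pm\phi_i]$ (via Lemma~\ref{lem:quotients}, Lemma~\ref{lem:hnn}, and the flip automorphism) is exactly right. The gaps are in the three ``generic'' cases, where the dominating sets you propose do not quite work.

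\textbf{Case $d\ne 0$.} Your claim that ``every standard generator of $\Fbr$ commutes with some $\alpha_{k,k+1}$ of sufficiently large index'' is false for $x_0$ and $x_1$: relations (D1)--(D4) all shift the subscript, so $x_0^{-1}\alpha_{k,k+1}x_0=\alpha_{k+1,k+2}$ for $k\ge 2$, and similarly for $x_1$; relation (D5) never applies to $x_0$ or $x_1$ against any $\alpha_{k,k+1}$. The paper sidesteps this by replacing the $F$-generators $x_0,x_1$ with $x_2$ and $x_2x_0^{-1}$: then $x_2$ commutes with $\alpha_{1,2}$ by (D5), and $x_2x_0^{-1}$ commutes with $\alpha_{4,5}$ (the two conjugation shifts from (D1) cancel). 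This is a small fix, but it is needed.

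\textbf{Case $c\ne 0$, $d=0$.} Here the gap is more serious. The set $J=\{\beta_{1,j}:j\ge 3\}$ neither dominates $x_0$ (by (D7) one has $x_0^{-1}\beta_{1,j}x_0=\beta_{2,j+1}\beta_{1,j+1}$) nor dominates $\alpha_{1,s}$ (relation (B7) is a nontrivial conjugation, not a commutation), and it is not clear that $C(J)$ is connected since any two $\beta_{1,j}$, $\beta_{1,j'}$ share strand~$1$ after expansion. The paper's key idea, which you are missing, is to throw in the elements $\delta(T)=(T,\Delta_n,T)$ coming from the centers of the $PB_n$. These satisfy $\omega_0(\delta(T))=1$ and $\phi_i(\delta(T))=0$, so once $d=0$ they are $\chi$-hyperbolic; since $\Delta_n$ is central in $PB_n$, every element of $\Pbr$ commutes with some $\delta(T)$, which simultaneously gives domination of all of $\Pbr$ and forces $C(J_0)$ to have diameter~$2$. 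Conjugates of $\beta_{1,j}$ then handle $[F,F]$ (Lemma~\ref{lem:comm_dom}), and $\beta_{1,2}$ handles elements of $F$ fixing $1/2$.

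\textbf{Case $c=d=0$, $a,b\ne 0$.} You flag this as the principal obstacle, but it is in fact the easiest of the three. Take $J_F$ to be the set of elements of $F$ whose support has exactly one of $0,1$ as a limit point; any such element has exactly one of $\phi_0,\phi_1$ nonzero, hence survives $\chi$, and elements with disjoint supports commute so $C(J_F)$ is connected. Each $\alpha_{i,j}$ and $\beta_{i,j}$ commutes with $x_j\in J_F$ by (D5) and (D9), so domination of the braid generators is immediate --- no delicate essential-endpoint analysis is required.
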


We will prove the theorem by looking at various cases. First we take care of the points $[\pm\phi_i]$. Here we will appeal to symmetry under an automorphism $\rho \colon \Fbr \to \Fbr$ that switches the roles of $\phi_0$ and $\phi_1$. The automorphism $\rho$ takes $(T_-,p,T_+)$, viewed as a split-braid-merge diagram from top to bottom living in $3$-space, and rotates it 180 degrees about an axis passing through the roots of both trees.

\begin{observation}\label{obs:ad_hoc}
 We have $[\phi_i]\in\Sigma^1(\Fbr)^c$ and $[-\phi_i]\in\Sigma^1(\Fbr)$, for $i=0,1$.
\end{observation}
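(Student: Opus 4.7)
The plan is to split the four claims into two halves and use a different tool for each. For the ``bad'' points $[\phi_i]\in\Sigma^1(\Fbr)^c$ ($i=0,1$): since $\phi_i=\chi_i\circ\pi$ factors through the natural surjection $\pi\colon\Fbr\onto F$, and since the classical Bieri--Neumann--Strebel calculation gives $\Sigma^1(F)^c=\{[\chi_0],[\chi_1]\}$, the contrapositive of Lemma~\ref{lem:quotients} immediately yields $[\phi_i]\in\Sigma^1(\Fbr)^c$.

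For the ``good'' points $[-\phi_i]\in\Sigma^1(\Fbr)$, I would first cut the work in half using the rotational automorphism $\rho$ of $\Fbr$ (rotation by $180^{\circ}$ of split-braid-merge diagrams), which swaps the leftmost and rightmost leaves of every tree and thus satisfies $\phi_0\circ\rho=\phi_1$ and $(-\phi_0)\circ\rho=-\phi_1$. Since $\Sigma^1$ is preserved by automorphisms, it suffices to establish $[-\phi_0]\in\Sigma^1(\Fbr)$; the case of $[-\phi_1]$ then follows by applying $\rho$.

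To prove $[-\phi_0]\in\Sigma^1(\Fbr)$ I would apply Lemma~\ref{lem:dom_lem}. Take $I=\{x_i,\alpha_{i,j},\beta_{i,j}\}$, the standard generating set. For the set $J$ of $(-\phi_0)$-hyperbolic elements I propose a family of ``left-localized copies of $x_0$'': for each dyadic $a\in(0,1)$, let $h_a\in F\leq\Fbr$ act like a rescaled $x_0$ on $[0,a]$ and as the identity on $[a,1]$, and also include $x_0$ itself. Each such element satisfies $\phi_0\neq 0$ and so survives $-\phi_0$. Domination then breaks into three cases: by Proposition~\ref{prop:commuting}, any $h_a$ with $a\leq 1/2$ commutes with every $\alpha_{i,j}$ and $\beta_{i,j}$, because $X_{ess}(\alpha_{i,j})$ and $X_{ess}(\beta_{i,j})$ consist of dyadic points in $[1/2,1]$ which $h_a$ fixes; by support disjointness, $h_a$ commutes with $x_i$ ($i\geq 1$) as soon as $a\leq 1-2^{-i}$; and $x_0$ dominates itself.

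The main technical obstacle is arranging for the commuting graph $C(J)$ to be connected. The element $x_0$ does not commute with any $h_a$ (its centralizer in $F$ is essentially $\langle x_0\rangle$), and distinct $h_a$'s do not pairwise commute in general, so the naive $J$ splits $C(J)$ into several components. I would address this by enlarging $J$ with carefully chosen intermediate $(-\phi_0)$-hyperbolic elements---for instance, various conjugates $x_0^k h_a x_0^{-k}$ together with products of the form $x_0^k h_a$---each designed so that it commutes with an element already in $J$, chaining the initial vertices together. Producing such a chain explicitly is the heart of the proof; everything else follows from the classical $\Sigma^1(F)$ result and the symmetry provided by $\rho$.
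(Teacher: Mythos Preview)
Your treatment of $[\phi_i]\in\Sigma^1(\Fbr)^c$ via Lemma~\ref{lem:quotients}, and your use of the automorphism $\rho$ to reduce to $i=0$, are both correct and match the paper exactly.

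The gap is in the second half. Your plan to prove $[-\phi_0]\in\Sigma^1(\Fbr)$ via Lemma~\ref{lem:dom_lem} with $I=\{x_i,\alpha_{i,j},\beta_{i,j}\}$ cannot be completed, for a structural reason you have almost identified but not confronted. The centralizer of $x_0$ in $\Fbr$ is exactly $\langle x_0\rangle$: if $g=x_0^k p$ (with $p\in\Pbr$) commutes with $x_0$, then $x_0 p x_0^{-1}=p$, and since conjugation by $f\in F$ sends $X_{ess}(p)$ to $f(X_{ess}(p))$, the finite set $X_{ess}(p)\subset(0,1)$ is $x_0$-invariant; but $x_0$ has no fixed points in $(0,1)$, so $X_{ess}(p)=\emptyset$ and $p=1$. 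The same argument shows $C_{\Fbr}(x_0^k)=\langle x_0\rangle$ for every $k\neq 0$. Consequently, to dominate $x_0\in I$ your set $J$ must contain some $x_0^k$, and the entire connected component of $x_0^k$ in $C(J)$ lies in $\langle x_0\rangle$. But $J$ must also dominate $x_1$, and no power of $x_0$ commutes with $x_1$ (relation~(A) gives $x_0^{-1}x_1 x_0=x_2$), so $C(J)$ is forced to be disconnected. Your proposed bridging elements $x_0^k h_a x_0^{-k}$ and $x_0^k h_a$ do not help: none of them commute with any power of $x_0$, so none of them are adjacent to the $\langle x_0\rangle$-component.

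The paper avoids this entirely by using Lemma~\ref{lem:hnn}: $\Fbr$ is an ascending HNN-extension of the finitely generated subgroup $\Fbr(1)$ with stable letter $x_0$, and $-\phi_0$ vanishes on $\Fbr(1)$ with $-\phi_0(x_0)=1$. A standard fact (\cite[Theorem~2.1(1)]{bieri10}) then gives $[-\phi_0]\in\Sigma^1(\Fbr)$ immediately. This is precisely the situation in which the ascending-HNN criterion is stronger than the dominating-set criterion: the character is ``aligned'' with a single generator whose centralizer is too small for Lemma~\ref{lem:dom_lem} to apply directly.
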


\begin{proof}
 First note that we need only check the statements for $\pm\phi_0$, since the automorphism $\rho$ switches the roles of $\phi_0$ and $\phi_1$.

 We know that $[\phi_0]\in\Sigma^1(\Fbr)^c$ by Citation~\ref{cit:quotients}, since we have an epimorphism $\Fbr\onto F$ and the induced character $[\chi_0]$ on $F$ is in $\Sigma^1(F)^c$ \cite{bieri10}. 
 
 For the other statement, recall from Lemma~\ref{lem:hnn} that $\Fbr$ is an ascending HNN-extension of $\Fbr(1)$ by $x_0$. We have $-\phi_0(\Fbr(1))=0$ and $-\phi_0(x_0)=1$. Moreover, $\Fbr(1)$ is finitely generated, by arguments similar to those in \cite{brady08} that show $\Fbr$ is finitely generated. Hence by \cite[Theorem~2.1(1)]{bieri10}, we have $[-\phi_0]\in\Sigma^1(\Fbr)$.
\end{proof}

Now that we have handled $[\pm\phi_i]$, the strategy for the remaining characters is as follows. First we look at characters of $\Fbr$ that do not kill $\Pbr$. We first suppose that $\chi$ has non-zero $\omega_1$ component. Next we suppose that $\chi$ does have zero $\omega_1$ component and non-zero $\omega_0$ component (see Section~\ref{sec:chars} for definitions). Then we consider the case when $\chi$ does kill $\Pbr$, but has non-zero $\phi_0$ and $\phi_1$ components. In each of these three cases, we are able to apply Citation~\ref{cit:conn_dom} to conclude that $[\chi]\in\Sigma^1(\Fbr)$. 
 
\subsection*{Case 1} First we assume that the $\omega_1$ component of $[\chi]$ is non-zero. We will use Citation~\ref{cit:conn_dom} to show that $[\chi]$ is guaranteed to be in $\Sigma^1(\Fbr)$. The main trick is that every $\alpha_{i,i+1}$ is $\chi$-hyperbolic.
 
\begin{lemma}[Non-zero $\omega_1$]\label{lem:omega1_non_0}
 Let $\chi$ be any character with non-zero $\omega_1$ component. Then $[\chi]\in\Sigma^1(\Fbr)$.
\end{lemma}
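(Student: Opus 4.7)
The plan is to apply Lemma~\ref{lem:dom_lem} with a carefully chosen generating set $I$ and a set $J$ of $\chi$-hyperbolic elements. The crucial observation is that $\chi(\alpha_{i,i+1}) = d_1 \neq 0$ for every $i \geq 1$: since $\alpha_{i,i+1} \in \Pbr$ both $\phi_0$ and $\phi_1$ vanish on it, $\omega_0(\alpha_{i,i+1}) = 0$ because the first and last strands of $\alpha_{i,i+1}$ do not wind, and $\omega_1(\alpha_{i,i+1}) = 1$ since the winding strands $i$ and $i+1$ are adjacent. Hence the entire family $\{\alpha_{i,i+1}\}_{i \geq 1}$ is available as $\chi$-hyperbolic elements.

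I would first set $J$ to contain all $\alpha_{i,i+1}$ for $i \geq 1$. The induced commuting graph $C(J)$ on this family is connected: by relation (B1), $\alpha_{i,i+1}$ and $\alpha_{j,j+1}$ commute whenever $|i-j| \geq 2$, so the consecutive pair $\alpha_{i,i+1}$ and $\alpha_{i+1,i+2}$ can always be joined via $\alpha_{i+3,i+4}$, which commutes with both.

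Next I would take $I$ to be (a variant of) the standard generating set and verify that $J$ dominates $I$. Most generators are handled by the following commutation facts: by (B1), $\alpha_{i,j}$ commutes with $\alpha_{k,k+1}$ for $k > j$; by (B5), $\beta_{i,j}$ commutes with some $\alpha_{k,k+1}$ provided the index gap $j-i$ is at least $3$; and by (D5), $x_m$ commutes with $\alpha_{i,i+1}$ whenever $m \geq i+1$, so every $x_m$ with $m \geq 2$ is dominated by $\alpha_{1,2}$. The $\beta_{i,j}$'s with $j-i \leq 2$ can be removed from $I$ via iterated use of relation (C), rewriting $\beta_{i,j} = \beta_{i,N}\alpha_{i,N-1}\cdots\alpha_{i,j}$ for $N$ sufficiently large, so they are generated by dominated elements.

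The remaining generators are $x_0$ and $x_1$. For $x_1$, I would enlarge $J$ by a pure-braid element $p$ obtained by conjugating $\alpha_{1,2}$ by an element of $F$ that shifts its support into $[0,1/2]$; since characters are constant on conjugacy classes, $\chi(p) = d_1 \neq 0$, the hypothesis of Proposition~\ref{prop:commuting} is satisfied because $x_1$ fixes $[0,1/2]$ pointwise, and $p$ commutes with every $\alpha_{k,k+1}$ for $k \geq 2$ (disjoint supports), so $p$ stays inside the same component of $C(J)$. The main obstacle is $x_0$: since $X_{ess}$ is equivariant under conjugation and $x_0$ has no interior fixed points, one checks that $C_{\Pbr}(x_0) = \{1\}$ and hence $C_{\Fbr}(x_0) = \langle x_0 \rangle$, on which $\chi$ vanishes when $c_0 = 0$. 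The remedy is to remove $x_0$ from $I$ and replace it with a modified element of the form $y = x_0 \cdot q$ for a suitable $q \in \Pbr$, chosen so that $\chi(y) \neq 0$ and so that $y$ admits a $\chi$-hyperbolic partner in $J$ that extends the connectivity of $C(J)$. Executing this replacement cleanly is the technical heart of the proof.
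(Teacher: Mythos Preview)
Your proposal is largely on the right track and closely parallels the paper's argument: the same set $J=\{\alpha_{i,i+1}\mid i\ge1\}$, the same verification that $C(J)$ is connected via (B1), the same domination of the $\alpha_{i,j}$ and of the $\beta_{i,j}$ with $j-i\ge3$ via (B1) and (B5), and the same use of (C) to discard the remaining $\beta_{i,j}$. The divergence, and the genuine gap, is in how you handle the $F$-generators.

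Your treatment of $x_1$ (enlarging $J$ by an $F$-conjugate of $\alpha_{1,2}$ supported in $[0,\tfrac12]$) can be made to work but is more elaborate than needed. Your treatment of $x_0$, however, is incomplete. You correctly note that $C_{\Pbr}(x_0)=\{1\}$, so nothing currently in $J$ dominates $x_0$, and you propose replacing $x_0$ by some $y=x_0q$ with $q\in\Pbr$; but you neither specify $q$ nor verify that such a $y$ commutes with any element of $J$. Concretely, for $y=x_0q$ to commute with $z\in\Pbr$ one needs $qzq^{-1}=x_0^{-1}zx_0$, a conjugacy condition inside $\Pbr$ that is not obviously satisfiable for any $\chi$-hyperbolic $z$, and you give no candidate. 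Calling this ``the technical heart'' without executing it leaves the proof unfinished.

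The paper avoids both difficulties with one clean move: instead of $\{x_0,x_1\}$, it places $\{x_2,\,x_2x_0^{-1}\}$ in $I$. These two elements already generate $F$, $x_2$ commutes with $\alpha_{1,2}$ by (D5), and $x_2x_0^{-1}$ commutes with $\alpha_{4,5}$ because (D1) gives $x_0^{-1}\alpha_{4,5}x_0=\alpha_{5,6}=x_2^{-1}\alpha_{4,5}x_2$. No enlargement of $J$, no conjugates, no $\Pbr$-modification is required. The moral is that rather than perturbing $x_0$ by braids, one should replace it by an $F$-word whose conjugation action on $\alpha_{i,i+1}$ (governed entirely by (D1)) happens to be trivial.
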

 
\begin{proof}
 Let
 $$J_1\defeq\{\alpha_{i,i+1}\mid i\ge 1\} \text{ and } I_1\defeq\{\alpha_{i,j}\mid 1\le i<j\} \cup \{\beta_{i,j}\mid 1\le i<j-2\} \cup \{x_2,x_2 x_0\I\} \text{.}$$
 We claim that $J_1$ dominates $I_1$, $C(J_1)$ is connected, $I_1$ generates $\Fbr$, and every element of $J_1$ survives under $\chi$. First note that any $\alpha_{i,j}$ commutes with $\alpha_{j+1,j+2}$ (B1), that any $\beta_{i,j}$ with $i<j-2$ commutes with $\alpha_{i+1,i+2}$ (B5), that $x_2$ commutes with $\alpha_{1,2}$ (D5), and that $x_2 x_0\I$ commutes with $\alpha_{4,5}$ (D1). This tells us that $J_1$ dominates $I_1$. Now observe that every element of $J_1$ commutes with $\alpha_{1,2}$ except for $\alpha_{2,3}$ (B1), but $\alpha_{2,3}$ commutes with $\alpha_{4,5}$, so $C(J_1)$ is connected. That $I_1$ generates $\Fbr$ is routine to check in light of (C), since this ensures that $\beta_{i,j}$ with $i<j\le i+2$ can be obtained using $I_1$, and since $x_2$ and $x_2 x_0\I$ generate $F$. Finally, every element of $J_1$ survives under $\omega_1$ and dies under $\phi_0$, $\phi_1$ and $\omega_0$, so necessarily survives under $\chi$.
\end{proof}
 
\subsection*{Case 2} Next suppose that the $\omega_0$ component is non-zero. We will again use Citation~\ref{cit:conn_dom}, but with a different dominating set $J_0$ and generating set $I_0$.
 
First we need to discuss central elements of $PB_n$. For any $n$, the center $Z(PB_n)$ is cyclic, generated by an element $\Delta_n$ that can be visualized as spinning the $n$ strands around in lockstep by 360 degrees. For any tree $T$ with $n$ leaves, the element
$$\delta(T)\defeq (T,\Delta_n,T)$$
commutes with every element of the form $(T,p,T)$ for $p\in PB_n$. However, it is important that we use the same tree $T$; there is no guarantee that $\delta(T)$ will commute with an element of the form $(T',p,T')$ if $T'$ is not $T$.
 
Another important observation about the elements $\delta(T)$ is that they survive under $\omega_0$; indeed, $\omega_0$ of such an element is $1$. As a remark, $\omega_1$ of such an element equals $n-1$, but this will not matter in what follows.

\begin{lemma}\label{lem:comm_dom}
 Any element of $[F,F]\le \Fbr$ commutes with some conjugate of an element of the form $\beta_{1,j}$.
\end{lemma}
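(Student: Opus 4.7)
The plan is to produce, for a given $f\in[F,F]$, an $F$-conjugate of $\beta_{1,2}$ that commutes with $f$, using the Commuting Condition (Proposition~\ref{prop:commuting}). The key preliminary computation is that $X_{ess}(\beta_{1,2})=\{1/2\}$: the representative $(R_2,\tau_1^2,R_2)$ is already minimal, since the tree $R_2$ has a single interior endpoint $1/2$ and the two strands of $\tau_1^2$ are joined by a nontrivial full twist and hence are not clones. In particular, by Observation~\ref{obs:essential_invariant} this computation is intrinsic to $\beta_{1,2}$.

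From there the argument is short. Since $f\in[F,F]$ both characters $\chi_0(f)$ and $\chi_1(f)$ vanish, so $f$ has derivative $1$ at each endpoint of $[0,1]$ and is therefore the identity on some neighborhood $[0,\varepsilon]$ of $0$. I would pick any dyadic rational $y\in(0,\varepsilon)$ and then, using the well-known transitivity of $F$ on dyadic rationals in $(0,1)$, choose $h\in F$ with $h(1/2)=y$. Since $f$ fixes $y$, the conjugate $h^{-1}fh\in F$ fixes $1/2$, so it fixes $X_{ess}(\beta_{1,2})$. By Proposition~\ref{prop:commuting} this means $h^{-1}fh$ commutes with $\beta_{1,2}$; equivalently, $f$ commutes with $h\beta_{1,2}h^{-1}$, a conjugate of $\beta_{1,2}=\beta_{1,j}$ for $j=2$.

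The main obstacle is not analytic but conceptual: one has to notice that $\beta_{1,2}$ carries the very small ``footprint'' $X_{ess}=\{1/2\}$, which reduces the commutation question (via Proposition~\ref{prop:commuting}) to the trivial observation that every element of $[F,F]$ fixes some dyadic rational arbitrarily close to $0$. Using a larger $\beta_{1,j}$ would also work, but then $X_{ess}$ is a set of $j-1$ dyadic points, so one would have to steer an entire finite set into the fixed region of $f$; doable, but pointlessly fiddly compared with the single-point case.
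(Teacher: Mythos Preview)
Your proof is correct and follows essentially the same route as the paper: use Proposition~\ref{prop:commuting} together with the fact that elements of $[F,F]$ have support bounded away from the endpoints of $[0,1]$, then conjugate so that $X_{ess}$ lands in the fixed set. The paper phrases it for general $\beta_{1,j}$ while you specialize to $j=2$, which is a harmless simplification.

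One small correction to your closing commentary: for $j\ge 3$ the set $X_{ess}(\beta_{1,j})$ has only \emph{two} points, not $j-1$. In the braid underlying $\beta_{1,j}$ the middle strands $2,\dots,j-1$ are all parallel (each adjacent pair lies in the image of a cloning map), so the interior endpoints between them are inessential; only the first and last interior endpoints survive. This is exactly what the paper exploits when it says ``the essential endpoints of the subdivision will be only the first and last ones.'' It does not affect your argument, but your stated reason for preferring $j=2$ is not quite right.
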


\begin{proof}
 Conjugates of $\beta_{1,j}$ by elements of $F$ amount to subdividing $[0,1]$ into $j$ subintervals and then braiding the first and last ones around each other. In particular, the essential endpoints of the subdivision are $0$, the first endpoint after $0$, the last endpoint before $1$, and $1$; see the definition before Observation~\ref{obs:essential_invariant}. Given an element $f\in [F,F]$, so the support of $f$ in $[0,1]$ is bounded away from $0$ and $1$, we can choose a subdivision in which these four essential points are disjoint from the support of $f$. Then $f$ will commute with the conjugate of the $\beta_{1,j}$ corresponding to this subdivision by Proposition~\ref{prop:commuting}.
\end{proof}

\begin{proposition}[Non-zero $\omega_0$]\label{prop:omega0_non_0}
 Let $\chi$ be any character with non-zero $\omega_0$ component. Then $[\chi]\in\Sigma^1(\Fbr)$.
\end{proposition}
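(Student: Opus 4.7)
The plan is to apply the survivors-dominating-generators criterion (Lemma~\ref{lem:dom_lem}). Writing $\chi = c_1\phi_0 + c_2\phi_1 + c_3\omega_0 + c_4\omega_1$ with $c_3 \ne 0$, I would take the dominating set $J_0$ to consist of the central elements $\delta(T)$ (having $\omega_0(\delta(T))=1$) together with $F$-conjugates $f\beta_{1,j}f^{-1}$ for $j\ge 3$ (having $\omega_0=1$ and $\phi_0=\phi_1=\omega_1=0$). Generically every element of $J_0$ survives under $\chi$; any exceptional $\delta(R_n)$ with $c_3+(n-1)c_4=0$ may simply be discarded.

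My first step is to dominate the standard $\Pbr$-generators: $\alpha_{i,j}$ commutes with $\delta(R_{j+1})$ because $\alpha_{i,j}=(R_{j+1},p,R_{j+1})$ sits inside the pure braid group $PB_{j+1}$ in which $\Delta_{j+1}$ is central, and analogously $\beta_{i,j}$ is dominated by $\delta(R_j)$. For the $F$-generators $x_k$ with $k\ge 2$, Proposition~\ref{prop:commuting} yields $[x_k,\beta_{1,k+1}]=1$, since $X_{\mathrm{ess}}(\beta_{1,k+1}) = \{1/2,\,1-2^{-k}\} \subset [0,\,1-2^{-k}] = \mathrm{Fix}(x_k)$; and $x_1$ is similarly dominated by $x_0\beta_{1,3}x_0^{-1}$, whose essential endpoints $\{1/4,1/2\}$ lie in $\mathrm{Fix}(x_1)$. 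The connectedness of $C(J_0)$ is established via tree refinements: any two $\delta(T),\delta(T')$ commute after expanding to a common refinement $T''$, in which $\Delta_{|T''|}$ mediates; and each conjugate $f\beta_{1,j}f^{-1}$ commutes with $\delta(T)$ for any sufficiently deep refinement $T$ of its supporting tree.

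The main obstacle is handling the generator $x_0$. Combining $C_F(x_0)=\langle x_0\rangle$ with the observation (via Proposition~\ref{prop:commuting}) that the $x_0$-action on $\Pbr$ fixes only the identity element, one obtains $C_{\Fbr}(x_0)=\langle x_0\rangle$; consequently $x_0$ cannot be linked into any non-trivial part of $J_0$. I would resolve this by omitting $x_0$ from $I_0$ and adjoining instead an element $y=\beta\cdot x_0\in\Fbr$ with $\beta\in\Pbr$ chosen so that $\chi(y)\ne 0$ and so that $y$ admits a $\chi$-hyperbolic commuting partner connected to the rest of $J_0$. Since $\beta\in\Fbr(1)$, Lemma~\ref{lem:hnn} ensures that $\langle\Fbr(1),y\rangle=\langle\Fbr(1),x_0\rangle=\Fbr$, so $I_0$ still generates. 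The delicate part of the argument is choosing $\beta$ compatibly with the various subcases of $\chi$, in particular when $\chi(x_0)=c_2-c_1=0$.
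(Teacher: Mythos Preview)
Your framework matches the paper's—apply Lemma~\ref{lem:dom_lem} with a dominating set built from the $\delta(T)$ and from conjugates of $\beta_{1,j}$—and your verifications for the $\alpha_{i,j}$, $\beta_{i,j}$, the $x_k$ with $k\ge 1$, and the connectedness of $C(J_0)$ are fine.

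The gap is exactly where you flag it: $x_0$. You are right that no element of your $J_0$ can dominate $x_0$ directly, since $x_0$ has no fixed points in $(0,1)$; but your proposed fix—replacing $x_0$ by $y=\beta x_0$ with $\beta\in\Pbr$—is not actually carried out, and there is no evident choice of $\beta$ that works. For instance, if $\beta$ is chosen to commute with some $\delta(T)$, then $[\delta(T),\beta x_0]=1$ forces $x_0\delta(T)x_0^{-1}=\delta(T)$, which fails because $x_0$ moves every interior endpoint of $T$; a similar obstruction arises for the $F$-conjugates of $\beta_{1,j}$, whose essential-point sets must be fixed by the $F$-image of any commuting element. (The condition $\chi(y)\ne 0$ you impose is also beside the point: $y$ is meant to lie in $I_0$, and domination only requires a $\chi$-surviving \emph{partner}.) So the proof, as written, is incomplete at its decisive step.

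The paper sidesteps this obstacle with two moves you do not make. First, it invokes Lemma~\ref{lem:omega1_non_0} at the outset to assume the $\omega_1$-component of $\chi$ is zero; then \emph{every} $\delta(T)$ and every conjugate of $\beta_{1,j}$—including $j=2$—survives under $\chi$, with no need to discard anything. Second, and this is the idea you are missing, the paper never tries to dominate $x_0$ at all: it changes the generating set for $F$ inside $I_0$ to $[F,F]$ together with $\operatorname{Stab}_F(1/2)$. Elements of $[F,F]$ have support bounded away from $0$ and $1$ and are dominated by suitable conjugates of $\beta_{1,j}$ via Lemma~\ref{lem:comm_dom}; elements of $F$ fixing $1/2$ are dominated by $\beta_{1,2}$, whose only essential point is $1/2$. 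No element of $F$ without interior fixed points ever needs to be dominated.
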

 
\begin{proof}
 Thanks to Lemma~\ref{lem:omega1_non_0} we may assume that $\chi$ has $\omega_1$ component zero. In particular, every $\delta(T)$ is $\chi$-hyperbolic.
  
 Let $J_0$ be the set of all conjugates of $\beta_{1,j}$ for $j\ge 2$ and all $\delta(T)$ for all trees $T$ and all $n\ge1$. Then every element of $J_0$ survives under $\omega_0$ and dies under $\phi_0$ and $\phi_1$, and since $\chi$ has $\omega_1$ component zero this tells us that every element of $J_0$ survives under $\chi$. We next claim that $C(J_0)$ is connected, and in fact that it is connected with diameter $2$. Indeed, given any two elements $x,y$ of $J_0$ there exists a tree $T$ with $n$ leaves such that $x=(T,p,T)$ and $y=(T,q,T)$ for $p,q\in PB_n$. Then $x$ and $y$ both commute with $\delta(T)$.
  
 Now we need to find a generating set $I_0$ for $\Fbr$ that is dominated by $J_0$. Since every element of $\Pbr$ commutes with some $\delta(T)$, we may as well include all of $\Pbr$ in $I_0$. We just need to add elements to $I_0$ that are dominated by $J_0$ until we have generated $F\le\Fbr$. By Lemma~\ref{lem:comm_dom}, we can add all of $[F,F]$ to $I_0$. Also, note that $\beta_{1,2}$ commutes with any element of $F$ fixing $1/2$, by Proposition~\ref{prop:commuting}, and $F$ is generated by such elements together with $[F,F]$, so we are done.
\end{proof}

\subsection*{Case 3} Now suppose that $\chi$ kills $\Pbr$, so its $\omega_0$ and $\omega_1$ components are both zero. Also assume, for this case, that the $\phi_0$ and $\phi_1$ components of $\chi$ are not both zero. We will find yet another pair of sets $J_F$ and $I_F$ such that Citation~\ref{cit:conn_dom} applies. As a remark, the proof recovers the fact that the restrictions of such characters to $F$ are in $\Sigma^1(F)$, originally proved in \cite{bieri87}.

\begin{lemma}\label{lem:most_of_F}
 Let $\chi$ be a character that kills $\Pbr$ and whose $\phi_0$ and $\phi_1$ components are both non-zero. Then $[\chi]\in\Sigma^1(\Fbr)$.
\end{lemma}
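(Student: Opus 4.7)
The plan is to apply Lemma~\ref{lem:dom_lem} for $\chi$. Since $\chi$ kills $\Pbr$, write $\chi|_F = a\chi_0 + b\chi_1$ with $a, b \ne 0$ by hypothesis. For each dyadic $\epsilon \in (0, 1/4]$, pick $L_\epsilon \in F$ supported in $[0, \epsilon]$ with slope $1/2$ at $0$ (for instance, a dyadic rescaling of $x_0$ into $[0, \epsilon]$), and symmetrically $R_\epsilon \in F$ supported in $[1-\epsilon, 1]$ with slope $1/2$ at $1$. Let $J_F$ be the collection of all such $L_\epsilon$ and $R_\epsilon$. Then $\chi(L_\epsilon) = -a$ and $\chi(R_\epsilon) = -b$ are both non-zero, so every element of $J_F$ survives under $\chi$.

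For the generating set $I_F$, take the union of $[F, F]$, the pair $\{L_{1/4}, R_{1/4}\}$, and all $F$-conjugates of the $\alpha_{i,j}$ and $\beta_{i,j}$. Since $[F, F]$ is the kernel of the surjection $(\chi_0, \chi_1) \colon F \onto \Z^2$ and the images of $L_{1/4}, R_{1/4}$ under this map are $(-1, 0)$ and $(0, -1)$, which form a $\Z$-basis of $\Z^2$, the set $[F, F] \cup \{L_{1/4}, R_{1/4}\}$ generates $F$. Adjoining the $F$-conjugates of the $\alpha_{i,j}, \beta_{i,j}$ then generates $\Fbr$ by Observation~\ref{obs:Pbr_gens}.

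To verify the remaining hypotheses of Lemma~\ref{lem:dom_lem}: any $f \in [F, F]$ has support inside some $[\delta, 1-\delta]$ with $\delta > 0$ dyadic, so for dyadic $\epsilon \le \min(\delta, 1/4)$, $L_\epsilon$ has support disjoint from that of $f$ and hence $[L_\epsilon, f] = 1$. The elements $L_{1/4}$ and $R_{1/4}$ lie in $J_F$ and commute with themselves. For $g = h\alpha_{i,j}h^{-1}$ or $g = h\beta_{i,j}h^{-1}$ with $h \in F$, the set $X_{ess}(g)$ is a finite set of dyadic points in $(0, 1)$; for dyadic $\epsilon \le \min(\min X_{ess}(g), 1/4)$, the element $L_\epsilon$ fixes $X_{ess}(g)$ pointwise, so Proposition~\ref{prop:commuting} gives $[L_\epsilon, g] = 1$. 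Finally, $C(J_F)$ is connected: every $L_\epsilon$ commutes with $R_{1/4}$ and every $R_\epsilon$ commutes with $L_{1/4}$ by disjointness of supports in $[0, 1/4]$ and $[3/4, 1]$, and $L_{1/4}$ commutes with $R_{1/4}$, so the hub $\{L_{1/4}, R_{1/4}\}$ connects everything. Lemma~\ref{lem:dom_lem} then yields $[\chi] \in \Sigma^1(\Fbr)$.

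The main subtlety is confirming that $[F, F] \cup \{L_{1/4}, R_{1/4}\}$ generates $F$, which reduces to the $\Z$-basis computation above together with the identification $[F, F] = \ker(\chi_0, \chi_1)$; once that is in hand, disjoint-support commuting and Proposition~\ref{prop:commuting} do the rest with essentially no further combinatorics.
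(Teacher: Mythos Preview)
Your proof is correct and follows essentially the same strategy as the paper: apply Lemma~\ref{lem:dom_lem} with a dominating set consisting of elements of $F$ supported near one endpoint of $[0,1]$, which survive precisely because both the $\phi_0$ and $\phi_1$ components of $\chi$ are non-zero. The paper's $J_F$ is the larger set of all elements of $F$ whose support accumulates at exactly one endpoint, and its $I_F$ uses the bare generators $\alpha_{i,j}$, $\beta_{i,j}$ rather than their $F$-conjugates; domination of these is obtained directly from relations (D5) and (D9), which say that $x_j$ commutes with $\alpha_{i,j}$ and $\beta_{i,j}$. Your version instead takes concrete one-sided elements $L_\epsilon$, $R_\epsilon$ and dominates the $F$-conjugates of the braid generators via Proposition~\ref{prop:commuting}; this trades a one-line appeal to the presentation for an appeal to the geometric commuting criterion, but the underlying idea is identical.
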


\begin{proof}
 Let $J_F$ be the set of all elements of $F\le\Fbr$ whose support has precisely one of the endpoints of $[0,1]$ as a limit point. Since elements with disjoint supports commute, it is straightforward to verify that $C(J_F)$ is connected. Also, any element of $J_F$ survives under $\chi$ by our hypothesis on $\chi$.
 
 Now define $I_F'$ to be $[F,F]\cup J_F$. It is straightforward to check that $J_F$ dominates $I_F'$ and that $I_F'$ generates $F$, so we recover the fact that $[\chi|_F]\in\Sigma^1(F)$. Now let $I_F$ be the union of $I_F'$ with the set of elements of the form $\alpha_{i,j}$ and $\beta_{i,j}$ ($1\le i<j$), so $I_F$ generates $\Fbr$. Any $\alpha_{i,j}$ or $\beta_{i,j}$ commutes with $x_j$ by (D5) and (D9), which is in $J_F$, so $J_F$ dominates $I_F$.
\end{proof}
 
We can now put the cases together and compute $\Sigma^1(\Fbr)$.
 
\begin{proof}[Proof of Theorem~\ref{thrm:BNS_BF}]
 Let $[\chi]\in \Sigma(\Fbr)$, say
 $$\chi=a\phi_0 + b\phi_1 + c\omega_0 + d\omega_1 \text{.}$$
 If $c\ne0$ or $d\ne0$ then $[\chi]\in\Sigma^1(\Fbr)$ by Lemma~\ref{lem:omega1_non_0} and Proposition~\ref{prop:omega0_non_0}, so assume $c=0$ and $d=0$. If $a$ and $b$ are both non-zero then $[\chi]\in\Sigma^1(\Fbr)$ by Lemma~\ref{lem:most_of_F}. The four remaining points of $\Sigma(\Fbr)$ are $[\pm\phi_i]$ for $i=0,1$, which are handled by Observation~\ref{obs:ad_hoc}.
\end{proof}

An immediate application is that we know exactly when normal subgroups of $\Fbr$ containing $[\Fbr,\Fbr]$ are finitely generated.

\begin{corollary}\label{cor:fin_gen_normal_subgroups}
 Let $N$ be a normal subgroup of $\Fbr$, and suppose that $N$ contains $[\Fbr,\Fbr]$. Then $N$ is finitely generated if and only if $N\not\subseteq\ker(\phi_0)$ and $N\not\subseteq\ker(\phi_1)$.
\end{corollary}

\begin{proof}
 First note that by assumption $\Fbr/N$ is abelian. If $N$ is contained in the kernel of either $\phi_0$ or $\phi_1$, then $N$ is not finitely generated, by Citation~\ref{cit:fin_props}. Now suppose that $N$ is not contained in either kernel. Let $\chi$ be any non-trivial character of $\Fbr$ such that $\chi(N)=0$, so $[\chi]\not\in\{[\phi_0],[\phi_1]\}$. In particular, $[\chi]\in\Sigma^1(\Fbr)$, so $N$ is finitely generated by Citation~\ref{cit:fin_props}.
\end{proof}

We can combine this result with a proposition from the previous section:

\begin{corollary}\label{cor:closure_of_F}
 Let $f,g\in F$ be elements such that $\chi_0(f)\neq 0$, $\chi_1(f)=0$, $\chi_0(g)=0$ and $\chi_1(g)\neq 0$. Then the normal closure $\langle\!\langle f,g\rangle\!\rangle$ in $\Fbr$ is finitely generated.
\end{corollary}

\begin{proof}
 The normal closure contains the commutator subgroup by Proposition~\ref{prop:normal_closure_F}. Hence it is finitely generated by Corollary~\ref{cor:fin_gen_normal_subgroups}.
\end{proof}

\section{Normal subgroups under $\Pbr$}\label{sec:non_abelian_quotients}

Before discussing normal subgroups of $\Vbr$ and $\Fbr$ contained in $\Pbr$, we should generally inspect the subgroups of $\Pbr$. Since $\Pbr$ is a direct limit of copies of pure braid groups, we know that every subgroup of $\Pbr$ is a direct limit of subgroups of the $PB_n$, so our first goal is to pin down what can happen. For each $n\in\N$ and each tree $T$ with $n$ leaves, recall from Section~\ref{sec:Pbr} that $PB_T$ denotes the copy of $PB_n$ consisting of triples of the form $(T,p,T)$, and $\Pbr$ is the direct limit of the $PB_T$.

Given a family of subgroups $G_T$ of the $PB_T$, one for each tree $T$, we can consider the subgroup of $\Pbr$ generated by all the $G_T$. If we want any hope of recovering the family from the subgroup it generates, and hence of classifying the subgroups of $\Pbr$, we need some conditions on the family. The criteria to check this are as follows. Let $(G_T)_T$ be a family of subgroups with $G_T\le PB_T$ for each $T$. We will call the family \emph{coherent} if whenever $T\le T'$, the inclusion $PB_T\to PB_{T'}$ restricts to an inclusion $G_T\to G_{T'}$, i.e., $G_T\le G_{T'}$ as subgroups of $\Pbr$. If moreover $G_{T'}\cap PB_T$ equals $G_T$ we will call the family \emph{complete}; the condition here that is not immediate is $G_{T'}\cap PB_T \subseteq G_T$. The point is that we can recover a complete coherent family of subgroups of the $PB_T$ from the subgroup they generate in $\Pbr$, as the next proposition makes precise.

\begin{proposition}[Subgroups of $\Pbr$]\label{prop:PBV_subgroups}
 The subgroups of $\Pbr$ are in one-to-one correspondence with the complete coherent families of subgroups of the $PB_T$.
\end{proposition}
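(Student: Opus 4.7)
The plan is to construct mutually inverse maps $\Phi$ and $\Psi$ between the set of subgroups of $\Pbr$ and the set of complete coherent families of subgroups of the $PB_{n,T}$. In one direction, send $H\le\Pbr$ to the family $\Phi(H)\defeq(\iota_{n,T}\I(H))_{n,T}$. In the other, send a family $(G_{n,T})$ to $\Psi((G_{n,T}))\defeq\bigcup_{n,T}\iota_{n,T}(G_{n,T})\subseteq\Pbr$. The whole argument hinges on the single identity $\iota_{n',T'}\circ\clone=\iota_{n,T}$ whenever $(n,T)\le(n',T')$, which just records that expansion does not change the element represented in $\Pbr$.

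The first routine verification is that $\Phi(H)$ actually is a complete coherent family: coherence amounts to $\clone(\iota_{n,T}\I(H))\subseteq\iota_{n',T'}\I(H)$, and completeness to $\clone\I(\iota_{n',T'}\I(H))=\iota_{n,T}\I(H)$, both of which fall straight out of the key identity. It is also worth checking that the union in the definition of $\Psi$ is already a subgroup, so no closure is needed: coherence makes the images $\iota_{n,T}(G_{n,T})$ into a directed system under inclusion (given two indices, pass to a common upper bound in the caret ordering, which exists by joining trees in the obvious way), so the union is automatically closed under multiplication and inversion.

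For $\Psi\circ\Phi=\operatorname{id}$ the nontrivial inclusion is that every element of $H$ arises as $\iota_{n,T}(p)$ for some $p\in\iota_{n,T}\I(H)$, which is immediate since the images $\iota_{n,T}(PB_{n,T})$ cover $\Pbr$. The main obstacle is the other direction $\Phi\circ\Psi=\operatorname{id}$, which is the content that makes \emph{completeness} (as opposed to mere coherence) essential. Given a complete coherent family $(G_{n,T})$ and $H\defeq\Psi((G_{n,T}))$, I would take $p\in PB_{n,T}$ with $\iota_{n,T}(p)\in H$, so that $\iota_{n,T}(p)=\iota_{n',T'}(q)$ for some $q\in G_{n',T'}$, and then use the direct limit description of $\Pbr$ to produce an index $(n'',T'')\ge (n,T),(n',T')$ at which $\clone(p)=\clone(q)$ inside $PB_{n'',T''}$. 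Coherence applied to $q$ places $\clone(q)$ in $G_{n'',T''}$, hence $\clone(p)\in G_{n'',T''}$, and completeness then forces $p\in G_{n,T}$. Without the completeness hypothesis this last deduction would fail, which is exactly why it is built into the definition.
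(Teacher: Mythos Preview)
Your proof is correct and follows essentially the same approach as the paper: both set up the bijection via $H\mapsto(\iota_{n,T}\I(H))_{n,T}$ and its inverse, and both establish injectivity on families by pushing to a common upper bound via coherence and then pulling back via completeness. Your organization is marginally cleaner in that you first observe the union $\bigcup\iota_{n,T}(G_{n,T})$ is already a subgroup (by directedness and coherence), so in the key step you only need $\iota_{n,T}(p)=\iota_{n',T'}(q)$ for a single $q$, whereas the paper writes $\iota_{m,S}(g)$ as a product of several $\iota_{n_i,T_i}(g_i)$ and takes a common upper bound of all of them---but this is the same argument.
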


\begin{proof}
 Every coherent family yields a subgroup of $\Pbr$, namely the subgroup generated by the subgroups in the family. Also, given a subgroup $G$ of $\Pbr$, the family $(PB_T\cap G)_T$ is coherent and complete for trivial reasons. The only thing to check then is that two distinct complete coherent families yield distinct subgroups of $\Pbr$. Let $(G_T)_T$ be coherent and complete, and let $G$ be the subgroup of $\Pbr$ generated by the $G_T$. We claim that $PB_T\cap G \subseteq G_T$ for all $T$, after which we will be done, since the reverse inclusion is immediate.
 
 Let $g\in PB_T\cap G$. Since $G$ is generated by the $G_T$, we can write $g$ as a product $g = g_1\cdots g_r$ where each $g_i$ lies in some $G_{T_i}$. Let $T'$ be a common upper bound for $\{T\}\cup\{T_i\}_{i=1}^r$, so $g$ and all the $g_i$ lie in $PB_{T'}$. Since the family is coherent all the $g_i$ even lie in $G_{T'}$. This implies $g\in G_{T'}$, and now since the family is complete and $g\in PB_T$, we conclude that $g\in G_T$.
\end{proof}

A consequence of the proof is that given any subgroup $G\le \Pbr$, the unique complete coherent family that generates $G$ is $(PB_T\cap G)_T$.

For $n(T)$ the number of leaves of $T$, denote by
$$\psi_T \colon PB_T \to PB_{n(T)}$$
the isomorphism $(T,p,T)\mapsto p$.

\begin{lemma}[Families of normal subgroups]\label{lem:seq_nml_subgps}
 Let $(G_T)_T$ be a complete coherent family of subgroups $G_T\le PB_T$, and let $G\le \Pbr$ be the subgroup generated by the $G_T$. Then $G$ is normal in $\Fbr$ if and only if each $G_T$ is normal in $PB_T$, and for every $T$ and $S$ with $n(T)=n(S)$ we have $\psi_T(G_T)=\psi_S(G_S)$ in $PB_n$.
\end{lemma}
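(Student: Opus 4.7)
Since $\Fbr = \Pbr \rtimes F$ (see Section~\ref{sec:Pbr}), it is enough to check that $G$ is closed under conjugation by $\Pbr$ and by $F$ separately. I will handle both directions by translating the statement into a statement about the individual $PB_{n,T}$ using the canonical maps $\iota_{n,T}$ together with the completeness assumption $\iota_{n,T}\I(G) = G_{n,T}$ from Proposition~\ref{prop:PBV_subgroups}.

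For the forward direction, assume $G\triangleleft\Fbr$. Given $p\in G_{n,T}$ and $q\in PB_{n,T}$, the element $\iota_{n,T}(q p q\I) = \iota_{n,T}(q)\iota_{n,T}(p)\iota_{n,T}(q)\I$ lies in $G$, so by completeness $q p q\I\in G_{n,T}$; thus $G_{n,T}\triangleleft PB_{n,T}$. For the second condition, let $T,S$ be trees with $n$ leaves and view $f\defeq (S,T)$ as an element of $F\le\Fbr$. A direct stacking computation gives $f\cdot(T,p,T)\cdot f\I = (S,p,S)$, so $\iota_{n,S}(p)\in G$ for every $p\in G_{n,T}$; completeness then yields $G_{n,T}\subseteq G_{n,S}$, and symmetry gives equality.

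For the backward direction, assume each $G_{n,T}$ is normal in $PB_{n,T}$ and only depends on $n$. It suffices to conjugate a single generator $\iota_{n,T}(p)$ (with $p\in G_{n,T}$) by an element $h\in\Pbr$ or $f\in F$ and show the result lies in $G$. For $h\in\Pbr$, write $h = \iota_{m,S}(h_0)$, pick a common upper bound $(n',T')$ for $(n,T)$ and $(m,S)$, and use the cloning maps to rewrite $h\iota_{n,T}(p)h\I = \iota_{n',T'}\bigl(h'\clone(p)(h')\I\bigr)$ for suitable $h'\in PB_{n',T'}$; coherence puts $\clone(p)$ into $G_{n',T'}$ and normality of $G_{n',T'}$ in $PB_{n',T'}$ closes the argument.

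The main remaining case is conjugation by $f\in F$, and this is where the hypothesis $G_{n,T}=G_{n,S}$ is essential. Given $p\in G_{n,T}$, perform expansions to replace $(T,p,T)$ by $(T',\clone_\Phi(p),T')$ with $T'=T\cup\Phi$, and choose a representative of $f$ of the form $(V_-,T')$ (adjusting $V_-$ correspondingly); then $f\I$ is represented by $(T',V_-)$. Stacking the three diagrams gives the clean identity
\[
 f\cdot(T,p,T)\cdot f\I = (V_-,\clone_\Phi(p),V_-) = \iota_{n',V_-}\bigl(\clone_\Phi(p)\bigr),
\]
where $n'$ is the common number of leaves of $T'$ and $V_-$. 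By coherence $\clone_\Phi(p)\in G_{n',T'}$, and by the hypothesis $G_{n',T'}=G_{n',V_-}$, so this element lies in $G$. The only real subtlety is the bookkeeping with expansions needed to realize $f$ with matching trees; once that is in place the calculation is mechanical, and it is precisely the tree-independence condition that lets the cloned element be reinterpreted inside the correct subgroup.
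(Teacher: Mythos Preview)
Your proof is correct and follows essentially the same argument as the paper's, just organized differently: the paper handles a general element $h=(S,q,U)\in\Fbr$ in one stroke (expanding so that the bottom tree of $h$ matches $T$, then computing $hgh\I=(S,qpq\I,S)$), whereas you split into separate conjugation-by-$\Pbr$ and conjugation-by-$F$ cases via the semidirect product decomposition. Both routes use the same ingredients---coherence to pass to a common refinement, normality of $G_{n,T}$ in $PB_{n,T}$ for the braid part, and tree-independence to move between $G_{n,T}$ and $G_{n,S}$---so the difference is purely organizational.
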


\begin{proof}
 First suppose each $G_T$ is normal in $PB_T$ and $\psi_T(G_T)=\psi_S(G_S)$ for all $T,S$ with $n(T)=n(S)$. An element of $G$ is a triple $g=(T,p,T)$ for some $T$ and $p\in \psi_T(G_T)$. Let $h=(S,q,U)$ be an arbitrary element of $\Fbr$. Since the family $(G_T)_T$ is coherent, we can expand $T$, $S$ and $U$ until without loss of generality $h=(S,q,T)$, so $hgh^{-1}=(S,qpq^{-1},S)$. Now, $G_T$ is normal in $PB_T$, so $qpq^{-1} \in \psi_T(G_T)$, and so by hypothesis is also in $\psi_S(G_S)$, which means that $(S,qpq^{-1},S) \in G_S$. We conclude that $hgh^{-1}\in G$.
 
 Now suppose $G$ is normal in $\Fbr$. It is immediate that $G_T$ is normal in $PB_T$. Let $T$ and $S$ both have $n$ leaves, and let $(T,p,T)\in G_T$, so $(T,p,T)\in G$. Since $G$ is normal we also have $(S,p,S)\in G$. But $G_S=PB_S\cap G$, so $p\in \psi_S(G_S)$. This shows $\psi_T(G_T)\subseteq \psi_S(G_S)$, and the reverse inclusion follows by the same argument.
\end{proof}

In conclusion, the normal subgroups of $\Fbr$ contained in $\Pbr$ are obtained precisely by choosing a normal subgroup $G_n\triangleleft PB_n$ for each $n$ such that for every $1\le k\le n$, we have:
\begin{equation}\label{eqn:complete_coherent}
 \clone_k^n(PB_n)\cap G_{n+1}=\clone_k^n(G_n)\text{.}
\end{equation}
Indeed, this equation ensures that the family $(G_T)_T$ given by $G_T\defeq G_{n(T)}$ is complete and coherent. We will call $(G_n)_{n\in\N}$ a \emph{complete coherent sequence of normal subgroups}.

Given such a sequence $(G_n)_{n\in\N}$, we will denote the corresponding subgroup of $\Pbr$ by $\Thkern{G_*}$ (following the notation in \cite{witzel18}), so $\Thkern{G_*} \triangleleft \Fbr$. In terms of triples, we have:
$$\Thkern{G_*} = \{(T,p,T)\mid T \text{ has } n \text{ leaves and } p\in G_n\} \text{.}$$

It is straightforward to decide when $\Thkern{G_*}$ is even normal in $\Vbr$.

\begin{lemma}\label{lem:Vbr_normal}
 Let $(G_n)_{n\in\N}$ be a sequence satisfying Equation~\ref{eqn:complete_coherent}, so $\Thkern{G_*}$ is normal in $\Fbr$. Then $\Thkern{G_*}$ is normal in $\Vbr$ if and only if each $G_n\le PB_n$ is normal in $B_n$.
\end{lemma}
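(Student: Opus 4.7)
The plan is to prove both directions by translating the normality condition through the concrete ``stacking'' description of products in $\Vbr$, combined with the coherence of the sequence $(G_n)$.

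For the forward direction, suppose $G_{\operatorname{Th}}\triangleleft \Vbr$. Fix $n$ and let $p\in G_n$ and $b\in B_n$. Pick any tree $T$ with $n$ leaves. Then $(T,p,T)\in G_{\operatorname{Th}}$ by the description of $G_{\operatorname{Th}}$, and $(T,b,T)$ is an element of $\Vbr$ (it need not be pure, which is precisely why this direction needs $\Vbr$-normality rather than just $\Fbr$-normality). Stacking gives
\[
(T,b,T)(T,p,T)(T,b,T)^{-1}=(T,bpb^{-1},T)\text{.}
\]
The conjugate $bpb^{-1}$ is again pure, since the projection to $\symm_n$ kills it. By normality this element lies in $G_{\operatorname{Th}}$, and then $bpb^{-1}\in \iota_{n,T}^{-1}(G_{\operatorname{Th}})=G_n$ (using the explicit description of a complete coherent sequence given in the proof of Proposition~\ref{prop:PBV_subgroups}). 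Hence $G_n\triangleleft B_n$.

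For the reverse direction, assume each $G_n$ is normal in $B_n$. Take an arbitrary $g=(T,p,T)\in G_{\operatorname{Th}}$ with $p\in G_n$ and an arbitrary $h=(S,q,U)\in \Vbr$. We want to show $hgh^{-1}\in G_{\operatorname{Th}}$. As in the proof of Lemma~\ref{lem:seq_nml_subgps}, we first use expansions to align the trees: choose a common expansion $T'$ of $T$ and $U$, with corresponding forest $\Phi$ above $T$ and forest $\Psi$ above $U$, and expand $S$ in the parallel way to $S'$. After expansion we may rewrite
\[
g=(T',\clone_\Phi(p),T'),\qquad h=(S',\clone_\Psi(q),T')\text{,}
\]
where both trees $T'$ and $S'$ have some common number of leaves $n'$. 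By coherence $\clone_\Phi(p)\in G_{n'}$. Stacking then gives
\[
hgh^{-1}=(S',\,\clone_\Psi(q)\,\clone_\Phi(p)\,\clone_\Psi(q)^{-1},\,S')\text{.}
\]
The middle element is a $B_{n'}$-conjugate of an element of $G_{n'}$, hence lies in $G_{n'}$ by our hypothesis, and it is pure by the same $\symm_{n'}$-projection argument as above. Therefore $hgh^{-1}\in \iota_{n',S'}(G_{n'})\subseteq G_{\operatorname{Th}}$.

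The only genuinely delicate step is the bookkeeping for the expansions in the second direction: one must verify that after bringing $U$ to a common expansion with $T$, the corresponding modification of $S$ produces a valid triple $(S',\clone_\Psi(q),T')$ equivalent to $h$, and that $\clone_\Phi(p)$ remains in the sequence. Both facts follow directly from the definition of expansion in Section~\ref{sec:the_groups} together with the coherence condition~\eqref{eqn:complete_coherent}, so there is no further obstacle.
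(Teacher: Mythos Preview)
Your proof is correct and follows essentially the same approach as the paper's. The paper is slightly terser in the reverse direction---it simply says ``expand the triples until they have a common bottom tree,'' absorbing your explicit forest bookkeeping with $\Phi$ and $\Psi$ into one phrase---but the underlying argument (align the bottom tree of $h$ with the trees of $g$, then observe the conjugate is $(S',bpb^{-1},S')$ with $bpb^{-1}\in G_{n'}$ by $B_{n'}$-normality) is identical.
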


\begin{proof}
 First suppose $\Thkern{G_*}$ is normal in $\Vbr$. Let $T$ be arbitrary, and let $p\in G_n$. For any $b\in B_n$, we have
 $$(T,bpb^{-1},T) = (T,b,T)(T,p,T)(T,b^{-1},T) \in \Thkern{G_*}$$
 and $(T,bpb^{-1},T) \in PB_T$, so in fact $(T,bpb^{-1},T) \in G_T$, which implies that $bpb^{-1} \in G_n$.
 
 Now suppose that $G_n$ is normal in $B_n$ for all $n$. Let $g\in \Thkern{G_*}$ and $h\in\Vbr$. Choose $T$, $S$, $p\in PB_{n(T)}$ and $b\in B_{n(T)}$ such that $g=(T,p,T)$ and $h=(S,b,T)$; in other words expand the triples until they have a common bottom tree. Now
 $$hgh^{-1} = (S,b,T)(T,p,T)(T,b^{-1},S) = (S,bpb^{-1},S)$$
 is in $G_S$ since $bpb^{-1} \in G_{n(S)}$. We conclude that $\Thkern{G_*}$ is normal in $\Vbr$.
\end{proof}

Here is one family of examples of complete coherent sequences  of normal subgroups $(G_n)_{n\in\N}$. Let $n\in\N$ and $1\le m\le n$. Call a pure braid $p\in PB_n$ \emph{$m$-loose} if it becomes trivial upon deleting all but any $m$ strands. For example, every pure braid is $1$-loose, and a pure braid $p$ is $2$-loose if and only if every total winding number between two strands is zero, if and only if $p\in[PB_n,PB_n]$. Let $\loose{n}{m}$ be the subgroup of $PB_n$ consisting of all $m$-loose braids, so $\loose{n}{m}$ is normal in $B_n$. We have:
$$PB_n=\loose{n}{1}>\loose{n}{2}=[PB_n,PB_n]>\loose{n}{3}>\cdots> \loose{n}{n-1}>\loose{n}{n}=\{1\}\text{.}$$
As a remark, $\loose{n}{n-1}$ is the group of \emph{Brunnian braids}, i.e., braids that become trivial upon removing any single strand.

\begin{lemma}\label{lem:clone_and_forget}
 For $1\le k\le n$ and $1\le m\le n$, we have $\clone_k^n(PB_n)\cap \loose{n+1}{m} = \clone_k^n(\loose{n}{m})$, i.e., Equation~\ref{eqn:complete_coherent} is satisfied.
\end{lemma}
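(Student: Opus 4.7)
The plan is to analyze how the cloning map interacts with the ``forget all but a subset of strands'' homomorphisms. For a nonempty subset $B\subseteq\{1,\ldots,n\}$, let $\pi_B\colon PB_n\to PB_{|B|}$ denote the homomorphism that deletes every strand not indexed by $B$. By definition $\loose{n}{m}=\bigcap_{|B|=m}\ker\pi_B$, and iterated deletion composes in the obvious way, so the filtration is monotone: $\loose{n}{m}\subseteq\loose{n}{m'}$ whenever $m'\le m$.

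The key compatibility, read off directly from the split-braid-merge picture, is the following case analysis for $C\subseteq\{1,\ldots,n+1\}$. \emph{Case~(i):} if $\{k,k+1\}\not\subseteq C$, let $C'\subseteq\{1,\ldots,n\}$ be the subset obtained from $C$ by identifying whichever of $k,k+1$ (if either) appears in $C$ with the original strand $k$ and shifting larger indices down by one; then $|C'|=|C|$ and $\pi_C\circ\clone_k^n=\pi_{C'}$. \emph{Case~(ii):} if $\{k,k+1\}\subseteq C$, let $C'\subseteq\{1,\ldots,n\}$ be the subset obtained from $C$ by deleting $k+1$ and reindexing (so $k\in C'$), and let $k^*$ be the position of $k$ inside $C'$; then $|C'|=|C|-1$ and $\pi_C\circ\clone_k^n=\clone_{k^*}\circ\pi_{C'}$. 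Both identities are immediate from the fact that cloning strand $k$ produces a pair of parallel strands: keeping only the strands in $C$ after cloning equals either keeping the corresponding strands of $p$ (when at most one clone survives) or doing so and then cloning the image at the appropriate position.

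For the forward inclusion $\clone_k^n(\loose{n}{m})\subseteq \clone_k^n(PB_n)\cap\loose{n+1}{m}$, take $p\in\loose{n}{m}$ and any $C$ with $|C|=m$. In Case~(i), $\pi_C(\clone_k^n(p))=\pi_{C'}(p)=1$ since $|C'|=m$. In Case~(ii), $\pi_C(\clone_k^n(p))=\clone_{k^*}(\pi_{C'}(p))$, and since $|C'|=m-1$, monotonicity of the filtration gives $\pi_{C'}(p)=1$. So $\clone_k^n(p)$ is killed by every $\pi_C$ with $|C|=m$, i.e., lies in $\loose{n+1}{m}$. For the reverse inclusion, suppose $\clone_k^n(p)\in\loose{n+1}{m}$ for some $p\in PB_n$. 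Given any $B'\subseteq\{1,\ldots,n\}$ with $|B'|=m$, choose a Case~(i) subset $C\subseteq\{1,\ldots,n+1\}$ of size $m$ whose associated $C'$ equals $B'$; this is always possible, e.g.\ put $k\in C$ iff $k\in B'$ and shift the larger elements of $B'$ up by one. Then $\pi_{B'}(p)=\pi_C(\clone_k^n(p))=1$, proving $p\in\loose{n}{m}$.

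The main obstacle is purely notational: making the strand re-indexing precise when $C$ meets $\{k,k+1\}$ so that the two identities $\pi_C\circ\clone_k^n=\pi_{C'}$ and $\pi_C\circ\clone_k^n=\clone_{k^*}\circ\pi_{C'}$ are unambiguous. Once the re-indexing conventions are fixed, the geometric content, that cloning a strand and then forgetting a subset of strands commutes with first forgetting and then (possibly) cloning, is visible at a glance from the picture, and the rest of the argument is bookkeeping plus the monotonicity of $\loose{n}{m}$ in $m$.
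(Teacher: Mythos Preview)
Your proof is correct and follows essentially the same approach as the paper: both hinge on the identity $\pi_C\circ\clone_k^n=\pi_{C'}$ for a suitably re-indexed subset $C'$, and your reverse inclusion is exactly the paper's construction of $S(k)$ from $S$. The only difference is that you spell out the forward inclusion via your Case~(i)/(ii) analysis and monotonicity of the filtration, whereas the paper simply declares that direction trivial.
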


\begin{proof}
 One direction is trivial: if $p\in \loose{n}{m}$, then $\clone_k^n(p) \in \loose{n+1}{m}$. Now suppose that $\clone_k^n(p) \in \loose{n+1}{m}$. Let $1\le i_1<i_2<\cdots <i_m\le n$ be the numbering of $m$ arbitrary strands of $p$, and let $S\defeq\{i_1,\dots,i_m\}$. Let $\pi_S\colon PB_n \to PB_m$ be the map that deletes all those strands not numbered by elements of $S$. Define
 $$S(k)\defeq\{i_1+\varepsilon_1,\dots,i_m+\varepsilon_m\}\text{,}$$
 where $\varepsilon_j$ is $0$ if $i_j\le k$ and is $1$ if $k<i_j$. Then $\pi_{S(k)}\circ \clone_k^n = \pi_S$. By assumption, $\pi_{S(k)}(\clone_k^n(p))=1$, and so $\pi_S(p)=1$. We conclude that $p\in \loose{n}{m}$.
\end{proof}

We emphasize that $m$-looseness is about deleting \emph{all but} any $m$ strands to get the trivial braid. If we instead considered deleting any $m$ strands to get the trivial braid, then this would \emph{not} give a coherent sequence. For instance, if $1\neq p\in \loose{n}{n-1}$ (so $p$ is Brunnian), then the lemma says $\clone_k^n(p)$ is in $\loose{n+1}{n-1}$, but it is not in $\loose{n+1}{n}$ (so not Brunnian), since deleting one of the cloned strands will bring us back to $p$, not to $1$.

We now have a concrete family of normal subgroups of $\Vbr$ contained in $\Pbr$, namely:

$$\thloose{m} \defeq \{(T,p,T)\mid T \text{ has } n \text{ leaves and } p\in \loose{n}{m}\}\text{.}$$

We have $\thloose{1}=\Pbr$ and $\thloose{2}=[\Pbr,\Pbr]$. As $m$ grows, we find a descending chain of normal subgroups
$$\cdots \triangleleft \thloose{3} \triangleleft \thloose{2}=[\Pbr,\Pbr] \triangleleft \Pbr=\thloose{1} \triangleleft \Fbr$$
with
$$\bigcap\limits_{m\in\N}\thloose{m} = \{1\}\text{.}$$

As a non-example of a complete coherent sequence, consider the sequence of normal subgroups of $B_n$ given by the centers $Z(B_n)\le PB_n$, generated by $\Delta_n$. Upon cloning, $\clone_k^n(\Delta_n) \le PB_{n+1}$ is not contained in $Z(B_{n+1})$, so this sequence is not coherent. Indeed, these subgroups, when considered as subgroups of $\Vbr$, normally generate all of $\Pbr$; in fact the single element $\beta_{1,2}=(R_2,\Delta_2,R_2)$ already normally generates $\Pbr$ in $\Vbr$.

When thinking of normal subgroups of (pure) braid groups, an obvious question is whether the coherent sequence $(PB_n^{(m)})_{n\in\N}$ of $m\thh$ derived subgroups is complete for fixed $m>2$. When $m=2$ we have $PB_n^{(2)}=\loose{n}{2}$, so the answer is yes, but the $m=3$ case is already unclear. Concretely, if $g$ is a product of commutators of products of commutators, and $g$ happens to feature a cloned strand, so $g=\clone_k(h)$ for some $h$, then is $h$ a product of commutators of products of commutators? All of these questions hold as well for the sequence of $m\thh$ terms of upper or lower central series, for fixed $m$, and for all the corresponding versions for the braid groups $B_n$.

\subsection{Quotients}\label{sec:quotients}

Given a complete coherent sequence of normal subgroups $(G_n)_{n\in\N}$, with limit $\Thkern{G_*}$, we can consider the quotients $\Fbr/\Thkern{G_*}$ and $\Vbr/\Thkern{G_*}$, which are somewhat straightforward to describe. The quotient map $\pi \colon \Vbr \onto \Vbr/\Thkern{G_*}$ takes a triple $(T,b,S)$ to a triple $(T,bG_n,S)$, where $n=n(T)=n(S)$. In particular the quotient can be described as the set of such triples, up to reduction and expansion, which are well defined since $(G_n)_{n\in\N}$ is complete and coherent. In the future we believe these quotients could be further inspected using the ``cloning systems'' framework from \cite{witzel18}.

\begin{example}[$\Fbr/\thloose{2}$]\label{ex:Thomp(H1(PB_*))}
 Note that since $\loose{n}{2}=[PB_n,PB_n]$, we have $PB_n/\loose{n}{2} = H_1(PB_n) = \Z^{\binom{n}{2}}$. Heuristically an element $\vec{v} \in H_1(PB_n)$ is a record of the total winding numbers of each pair of strands (hence the $\binom{n}{2}$) of a representative of $\vec{v}$ in $PB_n$. Fix a basis $(e_{i,j}\mid 1\le i<j\le n)$ for $\Z^{\binom{n}{2}}$. If $\vec{v}$ is the image of $p\in PB_n$ in the abelianization $H_1(PB_n)$, then the coefficient of $e_{i,j}$ in $\vec{v}$ is the total winding number of strands $i$ and $j$ in $p$.
 
 The quotient $\Fbr/\thloose{2}$ is described as follows. An element of $\Fbr/\thloose{2}$ is represented by a triple $(T,\vec{v},S)$ where $T$ and $S$ are trees with $n$ leaves and $\vec{v}\in H^1(PB_n)$. We consider such triples up to reduction and expansion, as in $\Fbr$; now expansion is described as follows. If we expand the $k\thh$ leaf of $T$ by attaching a caret, call the new tree $T'$, then we correspondingly replace $\vec{v}$ with an element $\clone_k^n(\vec{v})$ in $\Z^{\binom{n+1}{2}}$; the map $\clone_k^n$ is defined on the basis vectors as follows:
\begin{align*}
\clone_k^n(e_{i,j}) \defeq \left\{\begin{array}{ll}
                                    e_{i+1,j+1} &\text{if } 1\le k<i<j \\
                                    e_{i+1,j+1} + e_{i,j+1} &\text{if } 1\le k=i<j \\
                                    e_{i,j+1} &\text{if } 1\le i<k<j \\
                                    e_{i,j+1} + e_{i,j} &\text{if } 1\le i<k=j \\
                                    e_{i,j} &\text{if } 1\le i<j<k \text{.}
                                   \end{array}\right.
\end{align*}
 This should be compared to the relations (D1)--(D9) in the presentation of $\Fbr$ from Section~\ref{sec:presentations}, which also specify how to write $\clone_k^n(\alpha_{i,j})$ and $\clone_k^n(\beta_{i,j})$ as products of generators, e.g., $\clone_k^n(\alpha_{i,j}) = \alpha_{i+1,j+1}\alpha_{i,j+1}$ if $k=i$, and so forth.
\end{example}

One can show that, since no $H^1(PB_n)$ contains a non-abelian free group (being abelian), neither does $\Fbr/\thloose{2}$. In particular $\Fbr/\thloose{2}$ is not isomorphic to $\Fbr$. We take this as evidence that none of the $\Fbr/\thloose{m}$ should be isomorphic to $\Fbr$. On the other hand, the $m=2$ case is somewhat unique; for $m>2$, $PB_n/\loose{n}{m}$ does contain non-abelian free groups. Since $PB_n/\loose{n}{m}$ embeds into $\Fbr/\thloose{m}$ for any $n$, this is a proper quotient of $\Fbr$ that contains $F$ and contains non-abelian free groups.

\begin{question}\label{quest:mod_loose_iso}
 Are any of the quotients $\Fbr/\thloose{m}$ (for $m>2$) isomorphic to $\Fbr$ itself? Are any of the quotients $\Vbr/\thloose{m}$ (for $m\ge2$) isomorphic to $\Vbr$?
\end{question}

Note that for $\Vbr$ the above question includes the $m=2$ case, since $\Vbr/\thloose{2}$ does contain free subgroups (by virtue of $V$, unlike $F$, containing free subgroups). More generally, one can ask:

\begin{question}\label{quest:BF_Hopfian}
 Are $\Fbr$ and/or $\Vbr$ Hopfian?
\end{question}

This question becomes especially intriguing in light of the following:

\begin{proposition}\label{prop:PBV_not_Hopfian}
 The group $\Pbr$ is not Hopfian.
\end{proposition}

\begin{proof}
 For a non-trivial tree $T$ with $n$ leaves, define $T_L$ to be the subtree of $T$ whose root is the left child of the root of $T$. Let $T_R$ be the tree whose root is the right child of the root of $T$. For $0\le m\le n$, we have an epimorphism
 $$\phi_{n,m} \colon PB_n \to PB_m$$
 given by forgetting all the strands of a pure braid except for the first $m$ of them. Now define a map
 $$\phi_L \colon \Pbr \to \Pbr$$
 sending $(T,p,T)$ to $(T_L,\phi_{n,n(T_L)}(p),T_L)$. It is straightforward to check that this is well defined under reduction and expansion, and is a surjective homomorphism. It is also not injective; indeed the kernel contains every generator $\alpha_{i,j}$ and $\beta_{i,j}$ for $1\le i<j$, since these were defined using all-right trees.
\end{proof}

The normal subgroups $\ker(\phi_{n,m})$ do not form a coherent sequence, and in fact once they are considered inside $\Fbr$ or $\Vbr$, they normally generate all of $\Pbr$. This can be seen by noting that we catch every $\alpha_{i,j}$ and $\beta_{i,j}$ for $1\le i<j$, and the conjugates of these elements in $\Fbr$ generate all of $\Pbr$. Hence this does not give any direct hints about Question~\ref{quest:BF_Hopfian}.

\bibliographystyle{alpha}

\newcommand{\etalchar}[1]{$^{#1}$}

\end{document}